\newenvironment{enumroman}{\begin{enumerate}[\upshape (i)]}
	{\end{enumerate}}
\theoremstyle{plain}
\newtheorem{theorem}[equation]{Theorem}
\newtheorem{prop}[equation]{Proposition}
\newtheorem{lemma}[equation]{Lemma}
\theoremstyle{definition}
\newtheorem{definition}[equation]{Definition}
\newtheorem{example}[equation]{Example}
\newtheorem{remark}[equation]{Remark}
\numberwithin{equation}{section}
\newcommand{\Cat}{\mathcal Cat}
\newcommand{\colim}{\operatorname{colim}}
\newcommand{\cosk}{\operatorname{cosk}}
\newcommand{\Cpt}{\operatorname{Cpt}}
\newcommand{\css}{\mathcal{CSS}}
\newcommand{\cu}{\underline{c}}
\newcommand{\Deltaop}{\Delta^{\op}}
\newcommand{\heq}{\operatorname{heq}}
\newcommand{\Ho}{\operatorname{Ho}}
\newcommand{\Hom}{\operatorname{Hom}}
\newcommand{\id}{\operatorname{id}}
\newcommand{\Map}{\operatorname{Map}}
\newcommand{\map}{\operatorname{map}}
\newcommand{\ob}{\operatorname{ob}}
\newcommand{\op}{\operatorname{op}}
\newcommand{\Se}{\operatorname{Se}}
\newcommand{\se}{\operatorname{se}}
\newcommand{\Secat}{\mathcal Se\Cat}
\newcommand{\sesp}{\mathcal Se\Sp}
\newcommand{\Sets}{\mathcal Sets}
\newcommand{\sk}{\operatorname{sk}}
\newcommand{\Sp}{\mathcal Sp}
\newcommand{\SSets}{\mathcal{SS}ets}
\newcommand{\Thetanop}{\Theta_n^{\op}}
\newcommand{\Thetansp}{\Theta_n\css}
\newcommand{\tr}{\operatorname{tr}}
\newcommand{\uHo}{\underline{\Ho}}
\newcommand{\uMap}{\underline{\Map}}
\newcommand{\vu}{\underline{v}}
\newcommand{\wu}{\underline{w}}
\begin{document}
	
	\title[$(\infty,n)$-categories]{Discreteness and completeness for $\Theta_n$-models of $(\infty,n)$-categories}
	
	\author[J.E. Bergner]{Julia E. Bergner}
	
	\address{Department of Mathematics, University of Virginia, Charlottesville, VA 22904}
	
	\email{jeb2md@virginia.edu}
	
	\date{\today}
	
	\subjclass[2020]{18N65, 18N50, 18N40, 55U35, 55U40, 18D15, 18D20, 18G30, 18G55}
	
	\keywords{$(\infty, n)$-categories, model categories, $\Theta_n$-spaces, Segal categories, Dwyer-Kan equivalences}
	
	\thanks{The author was partially supported by NSF grants DMS-1659931 and DMS-1906281.  Work was also done on this paper at the Isaac Newton Institute for Mathematical Sciences, Cambridge, during the Homotopy Harnessing Higher Structures program in 2018, supported by EPSRC grant no EP/K032208/1; at Cornell University while supported by the AWM Michler Prize in 2018; at MSRI while the author was in residence during the Derived Algebraic Geometry program in 2019, supported by NSF grant DMS-14400140; at the Higher Categories and Categorification program in 2022, supported by MSRI via NSF grant DMS-1928930 and held in partnership with the the Universidad Nacional Aut\'onoma de M\'exico; and at the Higher Symmetries and Quantum Field Theory program at the Aspen Center for Physics, supported by National Science Foundation grant PHY-1607611.}
	
	\begin{abstract}
		We establish cartesian model structures for variants of $\Theta_n$-spaces in which we replace some or all of the completeness conditions by discreteness conditions. We prove that they are all equivalent to each other and to the $\Theta_n$-space model, and we give a criterion for which combinations of discreteness and completeness give non-overlapping models.  These models can be thought of as generalizations of Segal categories in the framework of $\Theta_n$-diagrams.  In the process, we give a characterization of the Dwyer-Kan equivalences in the $\Theta_n$-space model, generalizing the one given by Rezk for complete Segal spaces.
	\end{abstract}
	
	\maketitle
	
	\tableofcontents
	
	\section{Introduction}
	
	An $(\infty,n)$-category should be a higher categorical structure with objects and $i$-morphisms for all $i \geq 1$, satisfying weak associativity and unitality, and such that all $i$-morphisms are weakly invertible for $i>n$.  There have been a number of different approaches to modeling such a structure as a concretely-defined mathematical object, including the Segal $n$-categories of Hirschowitz and Simpson \cite{hs} and Pelissier \cite{pel}, the $n$-fold complete Segal spaces of Barwick and Lurie \cite{luriecob}, and the $\Theta_n$-spaces of Rezk \cite{rezktheta}.  Models that blend features of multiple of these models have been given by the author and Rezk \cite{inftyn1}, \cite{inftyn2} and by Moser, Rasekh, and Rovelli \cite{mrr}.  Other models include the $n$-relative categories of Barwick and Kan \cite{bkrel} and the $\Theta_n$-sets of Ara \cite{ara}.   More recently, models based on Verity's complicial sets \cite{verity} have been developed and compared by Ozornova and Rovelli \cite{or}, \cite{oradj} and Loubaton \cite{loubaton}, and Campion, Kapulkin, and Yaehara have made progress with cubical models \cite{cky}.  Further comparisons between models have been given by Haugseng \cite{haug}, by Doherty, Kapulkin, and Maehara \cite{dky}, and, using an axiomatic approach, by Barwick and Schommer-Pries \cite{bsp}.  In the special case when $n=2$, there are further results by the author with Ozornova and Rovelli \cite{infty2comp}, and by Gagna, Harpaz, and Lanari \cite{ghl}.  Results when $n=1$ are now well-established, and include the comparisons of Barwick and Kan \cite{bk}, the author \cite{thesis}, Dugger and Spivak \cite{ds}, Joyal \cite{joyalqcat}, Joyal and Tierney \cite{jt}, Lurie \cite{lurie}, and the axiomatic approach of To\"en \cite{toen}.
	
	Many of these models are given by some kind of diagram of simplicial sets; for example, $n$-fold complete Segal spaces and Segal $n$-categories are given by multisimplicial diagrams, and $\Theta_n$-spaces are given by functors out of the category $\Thetanop$.  Such diagrams are required to satisfy $n$ different Segal conditions, which essentially encode an up-to-homotopy composition for each of the $n$ levels of (not necessarily invertible) morphisms.  
	
	However, Segal diagrams without further assumptions do not quite model $(\infty, n)$-categories, which should behave like iterated enriched categories and in particular have a discrete space of objects and discrete spaces of $k$-morphisms for all $1 \leq k <n$.  Without such assumptions, we get structures more reflective of $n$-categories internal to spaces.    
	
	In general, there are two ways to impose extra structure on Segal diagrams to get models for $(\infty,n)$-categories.  The first is straightforward: simply ask that the desired spaces in the diagram be discrete.  This approach was the one taken by Hirschowitz and Simpson in their definition of Segal $n$-categories \cite{hs}.  This simplicity of definition, however, comes at a cost.  Being discrete is a rather unnatural condition from the perspective of homotopy theory, and as such causes many complications in setting up appropriate model structures.
	
	Rezk took a different approach in his development of complete Segal spaces as models for $(\infty,1)$-categories, and asked instead for a completeness condition, which essentially asks that the space of objects be weakly equivalent to the subspace of morphisms that behave suitably like homotopy equivalences.  Thus, the data of the whole object space is already encoded in the space of morphisms, and does not give substantially new information.  This approach was generalized to higher $(\infty,n)$-categories via the $n$-fold complete Segal space and $\Theta_n$-space models, in which $n$ different completeness conditions are assumed.  For example, the space of 1-morphisms is required to be weakly equivalent to the space of 2-morphisms that are homotopy equivalences in an appropriate sense, and similarly for higher morphisms.
	
	Thus, when we look at existing diagrammatic models for $(\infty,n)$-categories, taking multisimplicial models has an accompanying choice of whether to impose discreteness or completeness conditions, whereas thus far the $\Theta_n$-model has only been considered with completeness conditions.  A natural question is whether there is a corresponding model given by $\Theta_n$-diagrams with discreteness conditions, and answering it is the primary motivation for this paper.
	
	However, another question arises: do we need to make a single choice of either discreteness or completeness conditions everywhere, or can these conditions be ``mixed and matched"?  In \cite{inftyn1} we show that Segal category objects in $\Theta_{n-1}$-spaces give a model for $(\infty,n)$-categories; such objects have one discreteness condition and $(n-1)$-completeness conditions in the setting of $\Delta \times \Theta_{n-1}$-diagrams.  Here we address this question for $\Theta_n$-diagrams, and show that discreteness can be imposed ``from the bottom up": we get distinct models for $(\infty,n)$-categories by taking $\Theta_n$-diagrams of simplicial sets with discreteness imposed at level $k$, for some fixed $0 \leq k <n$, and completeness imposed for any $k < i < n$.  Essentially, if we ask for discreteness at a given level of morphism, the spaces of all lower levels of morphisms are forced to be discrete also.
	
	This answer in the $\Theta_n$-diagram setting naturally leads us back to ask the analogous question in the context of multisimplicial diagrams, as well as hybrid diagrams, indexed by categories of the form $\Delta^i \times \Theta_{n-i}$ for some $0<i<n$, of which Segal category objects in $\Theta_{n-1}$-spaces are an example.  In a sequel paper \cite{discrete2}, we address such models, and in particular give an explicit comparison between $n$-fold complete Segal spaces and Segal $n$-categories via these hybrids, a result that has been assumed but that does not seem to be in the literature.
	
	The idea that $\Theta_n$-diagrams with discreteness assumptions should be a viable model for $(\infty,n)$-categories developed at the early stages of the comparison of $\Theta_n$-spaces with categories enriched in $\Theta_{n-1}$-spaces, which the author established with Rezk.  However, complications with understanding Dwyer-Kan equivalences, and a suitable completion functor generalizing the one for complete Segal spaces in \cite{rezk} seemed prohibitively difficult.  Fortunately, results from that comparison program with Rezk have greatly facilitated the development of this model and its comparison with $\Theta_n$-spaces, as we show here.
	
	Our motivation for this work is primarily aesthetic, in that we believe that establishing all possible options for these kind of models results in a satisfying picture of the choices we have for this flavor of models for $(\infty,n)$-categories.  However, it also seems that the restriction of the models here to the case of $(\infty,n)$-categories with a single object provide a good way to think about monoidal $(\infty, n-1)$-categories, a project currently being undertaken by Valentina Zapata Castro.  It would also be worth investigating whether examples originally modeled by Segal $n$-categories might be more compactly described using $\Theta_n$-diagrams rather than multisimplicial ones.  
	
	After some general background in Section 2, we review the Segal category and complete Segal space models in Section 3.  In Section 4, we recall Rezk's $\Theta_n$-space model, and in Section 5, we recall the development of Segal and complete Segal objects in $\Theta_n$-spaces.  We devote Section 6 to understanding Dwyer-Kan equivalences in $\Theta_n$-spaces.  In Section 7 we give a general treatment of making objects in general diagrams discrete, which we then use in Section 8 to give a model structure on the category of $\Theta_n$-diagrams with certain objects discrete.  In Section 9 we give our comparison result between these models and Rezk's original $\Theta_n$-space model.  Finally in Section 10 we give the proof of a deferred technical result.
	
	\thank{The impetus to establish the models here arose while I was writing \cite{survey}, so I would like to thank Haynes Miller for the invitation to write that paper.  The influence of Charles Rezk's ideas can be found throughout, especially as this work builds so extensively on our previous collaboration.  I have had many conversations with many people about this aspects of this project, including Clark Barwick, Clemens Berger, Lyne Moser, Viktoriya Ozornova, Nima Rasekh, and Martina Rovelli, and more recently, my students Miika Tuominen and Valentina Zapata Castro.  Finally, I'd like to thank the referee and editor for helpful comments on the paper.}
	
	\section{Some background}
	
	Our goal is to put model structures on certain categories of functors in such a way that the objects that are both fibrant and cofibrant give models for $(\infty,n)$-categories.  Here, we give a brief review of some of the model category tools that we need.
	
	The models that we consider in this paper are all given by functors from some small category to the category of simplicial sets.  Recall that a \emph{simplicial set} is a functor $\Deltaop \rightarrow \Sets$, where $\Deltaop$ is the opposite of the category of finite ordered sets, and $\Sets$ denotes the category of sets.  We denote by $\SSets$ the category of simplicial sets.
	
	First, we recall the classical model structure on the category of simplicial sets, originally due to Quillen \cite{quillen}.  The weak equivalences are the maps whose geometric realizations are weak homotopy equivalences of topological spaces, the cofibrations are the monomorphisms, and the fibrations are the Kan fibrations.  Since this model structure is the only one that we consider on the category of simplicial sets, we simply use the notation $\SSets$ to refer to it.
	
	Given a small category $\mathcal C$, there are two canonical model structures on the category $\SSets^\mathcal C$ of functors $\mathcal C \rightarrow \SSets$, both of which have weak equivalences given by levelwise weak equivalences of simplicial sets.  In the \emph{injective} model structure, we take the cofibrations to be given levelwise, whereas in the \emph{projective} model structure, we take the fibrations to be given levelwise.  In this paper, we are primarily interested in the injective model structure.  Notice in particular that all objects are cofibrant in this model structure.  However, the disadvantage of the injective model structure is that we generally do not have explicit descriptions of sets of generating cofibrations and acyclic cofibrations, only arguments for their existence.  
	
	When $\mathcal C$ has the additional structure of a Reedy category, there is a third option: the Reedy model structure, which also has levelwise weak equivalences but fibrations and cofibrations defined in terms of matching and latching objects \cite{reedy}, \cite[15.3.4]{hirsch}.  In the particularly nice situation when $\mathcal C$ has the structure of an elegant Reedy category in the sense of \cite{elegant}, the Reedy model structure coincides with the injective structure.  Thus, we have both advantages: the cofibrancy of all objects from the injective model structure but also the explicit generating cofibrations and acyclic cofibrations for the Reedy model structure.  All of the indexing categories that we consider in this paper, specifically the categories $\Theta_n^{\op}$ for all $n \geq 0$, are elegant Reedy categories.
	
	In nice cases, models for $(\infty,n)$-categories can be obtained by localizing this Reedy model structure with respect to a set of maps in such way that the fibrant objects are the local objects with respect to these maps.  Let us recall this process briefly.
	
	Recall that in any model category, there is a notion of \emph{homotopy mapping space} $\Map^h(X,Y)$ between any two objects $X$ and $Y$.  When the model category in question is simplicial, then the homotopy mapping spaces can be obtained by taking the simplicial mapping spaces of cofibrant-fibrant replacements of $X$ and $Y$, but they can be defined more generally \cite[\S 17]{hirsch}.  
	
	Now, let $\mathcal M$ be a model category, and $S$ a set of maps in $\mathcal M$.  A fibrant object $Z$ of $\mathcal M$ is $S$-\emph{local} if, for any map $A \rightarrow B$ in $S$, the induced map
	\[ \Map^h(B,Z) \rightarrow \Map^h(A,Z) \]
	is a weak equivalence of simplicial sets.  An arbitrary map $X \rightarrow Y$ of $\mathcal M$ is an $S$-\emph{local equivalence} if, for any $S$-local object $Z$, the induced map
	\[ \Map^h(Y, Z) \rightarrow \Map^h(X,Z) \]
	is a weak equivalence of simplicial sets.  If $\mathcal M$ is a sufficiently nice model category, then there exists a model structure $\mathcal L_S \mathcal M$ on the underlying category of $\mathcal M$ in which the weak equivalences are the $S$-local equivalences, the cofibrations are the same as those of $\mathcal M$, and the fibrant objects are the $S$-local objects \cite[4.1.1]{hirsch}.
	
	However, in many examples we consider in this paper, we do not have a suitable model structure from which we can obtain our desired model structure as a localization.  We thus have to prove the existence of such a model structure from scratch.  We thus include the following recognition principle for cofibrantly generated model categories, originally due to Kan.
	
	\begin{theorem} \cite[11.3.1]{hirsch} \label{cofgen}
		Let $\mathcal M$ be a category that has all small limits and colimits. Suppose that $\mathcal M$ has a class of weak equivalences that satisfies the two-out-of-three property and that is closed under retracts. Let $I$ and $J$ be sets of maps in $\mathcal M$ that satisfy the following conditions.
		\begin{enumroman}
			\item \label{cofgen1} 
			Both $I$ and $J$ permit the small object argument
			\cite[10.5.15]{hirsch}.
			
			\item \label{cofgen2} 
			Every $J$-cofibration is an $I$-cofibration and a weak
			equivalence.
			
			\item \label{cofgen3} 
			Every $I$-injective is a $J$-injective and a weak
			equivalence.
			
			\item \label{cofgen4}
			One of the following conditions holds:
			\begin{enumerate}
				\item \label{cofgen4a}
				a map that is an $I$-cofibration and a weak equivalence is a
				$J$-cofibration, or
				
				\item \label{cofgen4b}
				a map that is both a $J$-injective and a weak equivalence is
				an $I$-injective.
			\end{enumerate}
		\end{enumroman}
		Then there is a cofibrantly generated model category structure on
		$\mathcal M$ in which $I$ is a set of generating cofibrations and
		$J$ is a set of generating acyclic cofibrations.
	\end{theorem}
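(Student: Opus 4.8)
The plan is to take the class $W$ of weak equivalences as given and to \emph{define} the cofibrations to be the $I$-cofibrations (the maps with the left lifting property against every $I$-injective) and the fibrations to be the $J$-injectives, and then to verify the model category axioms directly; the whole content lies in the interaction between these lifting properties and $W$. Small limits and colimits exist by hypothesis, and the two-out-of-three property holds by hypothesis. The classes of cofibrations and of fibrations are each defined by a lifting property and so are automatically closed under retracts, while $W$ is closed under retracts by hypothesis, which settles the retract axiom. For the factorization axioms I would invoke the small object argument, available by condition~\ref{cofgen1}: factoring an arbitrary map through $I$ writes it as an $I$-cofibration followed by an $I$-injective, and factoring it through $J$ writes it as a $J$-cofibration followed by a $J$-injective.

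The crux is to identify the two decorated classes correctly, namely to show that the $I$-injectives are exactly the \emph{acyclic fibrations} (fibrations lying in $W$) and that the $J$-cofibrations are exactly the \emph{acyclic cofibrations} (cofibrations lying in $W$). Granting these two identifications, both factorizations above acquire the required form, and both halves of the lifting axiom hold by the very definitions of $I$-cofibration and $J$-cofibration; that would complete the proof. One inclusion in each identification is immediate: condition~\ref{cofgen3} says every $I$-injective is a fibration lying in $W$, hence an acyclic fibration, and condition~\ref{cofgen2} says every $J$-cofibration is a cofibration lying in $W$, hence an acyclic cofibration.

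The reverse inclusions are where condition~\ref{cofgen4} and the \emph{retract argument} enter, and this is the step I expect to be the main obstacle. Suppose first that~\ref{cofgen4a} holds, and let $p$ be an acyclic fibration. Factoring $p = q i$ with $i$ an $I$-cofibration and $q$ an $I$-injective, condition~\ref{cofgen3} gives $q \in W$, so two-out-of-three forces $i \in W$; thus $i$ is an $I$-cofibration lying in $W$ and hence a $J$-cofibration by~\ref{cofgen4a}. Since $p$ is a $J$-injective, $i$ has the left lifting property against $p$, and the retract argument then exhibits $p$ as a retract of $q$; as $I$-injectives are closed under retracts, $p$ is an $I$-injective. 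The companion statement, that an acyclic cofibration is a $J$-cofibration, is now immediate from~\ref{cofgen4a}, since such a map is in particular an $I$-cofibration lying in $W$. If instead~\ref{cofgen4b} holds, I would run the same two arguments with the roles of $I$ and $J$, of cofibration and fibration, and of left and right lifting interchanged: condition~\ref{cofgen4b} directly gives that an acyclic fibration is an $I$-injective, while the dual retract argument—factoring an acyclic cofibration $j$ as a $J$-cofibration $k$ followed by a $J$-injective $p$, observing via two-out-of-three that $p \in W$ and hence by~\ref{cofgen4b} that $p$ is an $I$-injective—exhibits $j$ as a retract of $k$ and so as a $J$-cofibration. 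In either case both identifications hold, and the model structure, cofibrantly generated with generating cofibrations $I$ and generating acyclic cofibrations $J$, follows.
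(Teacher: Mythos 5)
Your proposal is correct, and it is essentially the standard argument: the paper itself does not prove this statement but cites it from Hirschhorn (Theorem 11.3.1), whose proof proceeds exactly as you describe --- define cofibrations as $I$-cofibrations and fibrations as $J$-injectives, obtain factorizations from the small object argument, and use two-out-of-three together with condition (iv) and the retract argument to identify the $I$-injectives with the acyclic fibrations and the $J$-cofibrations with the acyclic cofibrations. No gaps; your handling of the two cases of condition (iv), each giving one identification for free and the other via the (dual) retract argument, is precisely how the cited proof runs.
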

	
	Finally, we consider the additional structure of cartesian model categories.  We take the following definition from \cite[2.2]{rezk}; the version there includes an additional equivalent formulation of the last condition, but we omit it as we do not need it here.
	
	\begin{definition} \label{cartdef}
		A model category $\mathcal M$ is \emph{cartesian} if the underlying category is cartesian closed, its terminal object is cofibrant, and, if $f \colon X \rightarrow X'$ and $g \colon Y \rightarrow Y'$ are cofibrations in $\mathcal M$, then the pushout-corner map
		\[ X \times Y' \amalg_{X \times Y} X' \times Y \rightarrow X' \times Y' \]
		is a cofibration that is a weak equivalence if either $f$ or $g$ is.  
	\end{definition}
	
	\section{Complete Segal spaces and Segal categories} \label{cssandsc}
	
	In this section, we recall the complete Segal space and Segal category models for $(\infty,1)$-categories. All the models for $(\infty,n)$-categories that we develop in this paper can be regarded as suitable generalizations of one or both of these models.
	
	To start, for any $k \geq 0$, consider the $k$-simplex, or representable simplicial set $\Delta[k] = \Hom_\Delta(-, [k])$.  We are interested in the inclusion of the sub-simplicial set $G[k]$, defined to be the colimit of the diagram
	\[ \xymatrix@1{\Delta[1] \ar[r]^{d_0} & \Delta[0] & \Delta[1] \ar[l]_{d_1} \ar[r]^{d_0} & \cdots & \Delta[1] \ar[l]_{d_1}} \]
	in which there are $k$ copies of $\Delta[1]$ glued together along copies of $\Delta[1]$.  We can depict $G[k]$ as a string of $k$ consecutive arrows
	\[ \bullet \rightarrow \bullet \rightarrow \cdots \rightarrow \bullet \] but with no higher simplices.  This simplicial set $G(k)$ is sometimes called the \emph{spine} of $\Delta[k]$.
	
	Now we would like to regard these simplicial sets as discrete simplicial spaces, or functors $\Deltaop \rightarrow \SSets$; given a simplicial set $K$ we denote the corresponding discrete simplicial space by $K^t$.  In particular, $(K^t)_k = K_k$ for each $k \geq 0$. In contrast, the constant simplicial space, also denoted by $K$, is given by the simplicial set $K$ as each level.  The two are ``transpose" to one another by reversing the role of the two simplicial directions, but they should be regarded as different from one another.  In particular, the simplicial spaces $\Delta[k]^t$ are representable simplicial spaces, and so for any simplicial space $X$ we have
	\[ \Map(\Delta[k]^t, X) \cong X_k. \]
	
	Given any simplicial space $X$, and any $k \geq 0$, consider the \emph{Segal map} induced by the inclusion $G[k]^t \rightarrow \Delta[k]^t$:
	\[ X_n = \Map(\Delta[k]^t, X) \rightarrow \Map(G[k]^t, X) \cong \underbrace{X_1 \times_{X_0} \cdots \times_{X_0} X_1}_k. \]
	These maps are isomorphisms for $k=0,1$, so we restrict our attention to $k \geq 2$.
	
	\begin{definition}
		A Reedy fibrant simplicial space $X$ is a \emph{Segal space} if, for all $k \geq 2$, the Segal maps are all weak equivalences of simplicial sets.
	\end{definition}
	
	The following model structure can be obtained by localizing the Reedy model structure on simplicial spaces with respect to the maps $G[k]^t \rightarrow \Delta[k]^t$ for $k \geq 2$.
	
	\begin{theorem} \cite[7.1]{rezk}
		There is a cartesian model structure $\sesp$ on the category of simplicial spaces in which the fibrant objects are precisely the Segal spaces.
	\end{theorem}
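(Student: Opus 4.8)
The plan is to realize $\sesp$ as a left Bousfield localization of the Reedy model structure on the category $\SSets^{\Deltaop}$ of simplicial spaces, localized at the set $S = \{ G[k]^t \to \Delta[k]^t : k \geq 2 \}$. First I would recall that the Reedy model structure on $\SSets^{\Deltaop}$ is left proper and cofibrantly generated (indeed cellular), that its cofibrations are exactly the monomorphisms since $\Deltaop$ is elegant, and that every object is cofibrant. Hence the localization exists by the localization theorem cited from \cite{hirsch}, and I define $\sesp$ to be this localized model structure. By construction its cofibrations coincide with the Reedy cofibrations, its weak equivalences are the $S$-local equivalences, and its fibrant objects are precisely the $S$-local objects.

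The second step is to identify the $S$-local objects with the Segal spaces. A Reedy-fibrant object $X$ is $S$-local exactly when, for every $k \geq 2$, the map $\Map^h(\Delta[k]^t, X) \to \Map^h(G[k]^t, X)$ induced by the inclusion $G[k]^t \to \Delta[k]^t$ is a weak equivalence. Since this inclusion is a cofibration between cofibrant objects and $X$ is Reedy fibrant, and since the Reedy structure is a simplicial model category, these homotopy mapping spaces may be computed as the genuine simplicial mapping spaces. Using $\Map(\Delta[k]^t, X) \cong X_k$ together with the fact that $\Map(G[k]^t, X)$ computes the iterated fiber product $X_1 \times_{X_0} \cdots \times_{X_0} X_1$ (which presents the homotopy fiber product because Reedy fibrancy makes $X_1 \to X_0$ a fibration), this map is exactly the $k$-th Segal map. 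Thus $X$ is $S$-local if and only if it is a Segal space.

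The main obstacle is verifying that $\sesp$ is cartesian in the sense of Definition~\ref{cartdef}. The underlying category is cartesian closed and its terminal object is cofibrant. Because the cofibrations are unchanged by localization and the underlying Reedy structure is already cartesian (the pushout-corner map of two monomorphisms of simplicial spaces is again a monomorphism, and the acyclicity can be checked levelwise using that $\SSets$ is cartesian), the pushout-corner map of two cofibrations is again a cofibration. The genuine new content is that the pushout-corner map of a cofibration with an $S$-local acyclic cofibration is an $S$-local equivalence. I would obtain this from the general principle that a left Bousfield localization of a cartesian model category remains cartesian provided the class of local objects is closed under the internal hom $Z \mapsto \uMap(K, Z)$ for every simplicial space $K$.

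By adjunction, this closure property reduces to showing that for each $k \geq 2$ and each generating (Reedy) cofibration $c$, the pushout-corner map of $G[k]^t \to \Delta[k]^t$ with $c$ is an $S$-local equivalence; equivalently, that $\uMap(K, W)$ is again a Segal space whenever $W$ is one. This is the delicate step, and I expect essentially all the work to lie here: it requires a direct analysis of the products $G[k]^t \times K \to \Delta[k]^t \times K$ and of how the spine inclusions interact with the boundary inclusions in both the spatial and simplicial directions that generate the Reedy cofibrations. Once this product compatibility is established, the pushout-corner axiom for acyclic cofibrations follows formally, completing the verification that $\sesp$ is a cartesian model structure whose fibrant objects are the Segal spaces.
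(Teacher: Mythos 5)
Your proposal is correct and follows essentially the same route as the paper, which simply cites Rezk and notes that $\sesp$ is obtained by localizing the Reedy model structure on simplicial spaces at the spine inclusions $G[k]^t \rightarrow \Delta[k]^t$ for $k \geq 2$; your identification of the $S$-local objects with Segal spaces via Reedy fibrancy, and your reduction of the cartesian property to closure of Segal spaces under internal homs (equivalently, the product analysis of spine inclusions against generating cofibrations), is precisely the structure of Rezk's original proof of \cite[7.1]{rezk}.
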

	
	\begin{remark} \label{segalisreedyfibrant}
		We follow the convention of Rezk and require that a Segal space be Reedy fibrant, so that we have the above concise description of the fibrant objects in the corresponding model structure.  Doing so additionally permits us to consider the Segal maps as we have described them, rather than taking homotopy mapping spaces, and hence a homotopy limit in the definition of Segal maps. 
	\end{remark}
	
	Segal spaces behave like categories up to homotopy, an idea that can be made precise in the following way.  We can define the set of \emph{objects} of a Segal space to be $X_{0,0}$.  Given two objects $x,y$ of $X$, the \emph{mapping space} between $x$ and $y$ in $X$ is defined as the pullback
	\[ \xymatrix{\map_X(x,y) \ar[r] \ar[d] & X_1 \ar[d]^{(d_1, d_0)} \\
		\{(x,y)\} \ar[r] & X_0 \times X_0.} \]
	Since $X$ is assumed to be Reedy fibrant, the right-hand vertical map is a fibration, so the mapping space is in fact a homotopy pullback of this diagram.
	
	The definition of a Segal space in terms of the Segal maps guarantees the existence of an up-to-homotopy composition on mapping spaces: again because we have assumed that $X$ is Reedy fibrant, the left-hand map in the diagram
	\[ \xymatrix@1{X_1 \times_{X_0} X_1 & X_2 \ar[l]_-{(d_1, d_0)} \ar[r]^{d_1} & X_1} \]
	is an acyclic fibration and hence has a section which serves as a homotopy inverse.  We can also define the \emph{homotopy category} $\Ho(X)$ of a Segal space $X$, whose objects are those of $X$ and whose morphisms are the path components of the mapping spaces of $X$.
	
	However, in the above definition we have only used the 0-simplices of $X_0$ as the objects, so in some sense the additional simplicial data of $X_0$ is extraneous.  Indeed, to get an $(\infty,1)$-category we want to have a discrete space of objects, rather than an arbitrary space as in a general Segal space.  There are two approches to remedying this situation.  We start with the simplest: requiring that $X_0$ be discrete, so that all its higher simplices are degenerate.
	
	\begin{definition}
		A \emph{Segal category} is a Segal space $X$ such that $X_0$ is discrete.
	\end{definition}
	
	\begin{remark} \label{reedyrmk}
		Note that here we have assumed that a Segal category is Reedy fibrant.  This assumption is not made in many other treatments of Segal categories, for example in \cite{thesis}.  In some situations, we want to consider Segal categories that are projective fibrant, rather than Reedy fibrant, and hence do not specify one or the other in the basic definition.  In this paper, we only consider Segal categories and generalizations that are Reedy fibrant, so to be as streamlined as possible we include this assumption here.
	\end{remark}
	
	For many homotopy-theoretic purposes, however, requiring that a space be discrete is unnatural.  An alternative approach is to require $X_0$ to be equivalent to the space of homotopy equivalences in $X_1$, as described by Rezk \cite[\S 5.7]{rezk}.  Since a Segal space has a notion of up-to-homotopy composition, as well as identity maps defined by those in the image of the degeneracy map $X_0 \rightarrow X_1$, there is a natural definition of homotopy equivalences as those maps that have an inverse up to homotopy.  They form a subspace $X_{\heq} \subseteq X_1$, and indeed comprise some of the path components of $X_1$ \cite[5.8]{rezk}. The degeneracy map $s_0 \colon X_0 \rightarrow X_1$ factors through $X_{\heq}$, allowing for the following definition.
	
	\begin{definition} 
		A Segal space $X$ is \emph{complete} if the map $s_0 \colon X_0 \rightarrow X_{\heq}$ is a weak equivalence of simplicial sets.
	\end{definition}
	
	The proof that Segal categories and complete Segal spaces are equivalent models for $(\infty,1)$-categories is given by a Quillen equivalence of appropriate model categories.  The model structure for complete Segal spaces is obtained as a further localization of the Segal space model structure $\sesp$.  We localize with respect to the map $E^t \rightarrow \Delta[0]^t$, where $E$ is the simplicial set given by the nerve of the category with two objects and a single isomorphism between them.  One can check that $\Map(E^t, X) \simeq X_{\heq}$, verifying that the local objects with respect to this map are indeed complete. 
	
	\begin{theorem} \cite[7.2]{rezk}
		There is a cartesian model structure $\css$ on the category of simplicial spaces such that the fibrant objects are the complete Segal spaces.
	\end{theorem}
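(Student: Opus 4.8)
The plan is to obtain $\css$ as the left Bousfield localization of the Segal space model structure $\sesp$ at the single map $E^t \to \Delta[0]^t$, following exactly the recipe indicated in the text. Write $S = \{E^t \to \Delta[0]^t\}$. Since $\sesp$ is itself a localization of the Reedy model structure on simplicial spaces, it is left proper and cellular, so the localization $\mathcal{L}_S \sesp$ exists by the localization theorem \cite[4.1.1]{hirsch}; I would set $\css := \mathcal{L}_S \sesp$. By construction its cofibrations are the monomorphisms (those of $\sesp$) and its weak equivalences are the $S$-local equivalences.

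First I would identify the fibrant objects. These are the $S$-local objects, namely the Segal spaces $X$ for which $\Map^h(\Delta[0]^t, X) \to \Map^h(E^t, X)$ is a weak equivalence. As $\Delta[0]^t$ and $E^t$ are cofibrant and $X$ is fibrant, these homotopy mapping spaces are computed as ordinary mapping spaces; using $\Map(\Delta[0]^t, X) \cong X_0$ and the identification $\Map(E^t, X) \simeq X_{\heq}$ recalled above, and observing that the map in question is the one induced by $s_0 \colon X_0 \to X_{\heq}$, a Segal space is seen to be $S$-local precisely when it is complete. Hence the fibrant objects of $\css$ are exactly the complete Segal spaces.

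It then remains to verify that $\css$ is cartesian. The underlying category is unchanged, so it is still cartesian closed with cofibrant terminal object. Since the cofibrations of $\css$ coincide with those of $\sesp$, and $\sesp$ is cartesian, the pushout-corner map $X \times Y' \amalg_{X \times Y} X' \times Y \to X' \times Y'$ of two cofibrations is again a cofibration, and it is an $\sesp$-equivalence, hence an $S$-local equivalence, whenever one of the inputs is an $\sesp$-equivalence. The only genuinely new content is the acyclicity clause when one input is an $S$-local acyclic cofibration that need not be an $\sesp$-equivalence. By the standard criterion governing when a localization of a cartesian model category remains cartesian, this reduces to checking that for the map $E^t \to \Delta[0]^t$ in $S$ and every simplicial space $X$, the map $E^t \times X \to \Delta[0]^t \times X$ is an $S$-local equivalence. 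Since $\Delta[0]^t$ is terminal this is just the projection $E^t \times X \to X$, and via the usual reduction through the generating cofibrations and two-out-of-three (noting that products with constant contractible simplicial sets are already handled by the cartesian structure of $\sesp$), it suffices to treat $X = \Delta[m]^t$, i.e. to show that $E^t \times \Delta[m]^t \to \Delta[m]^t$ is an $S$-local equivalence for all $m \geq 0$.

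The hard part will be this last verification. Unwinding it against a complete Segal space $W$ via cartesian closure, one must show that $W \to W^{E^t}$ is a levelwise weak equivalence; equivalently, that restriction $\Map((\Delta[m] \times E)^t, W) \to \Map(\Delta[m]^t, W) = W_m$ along a chosen object of $E$ is a weak equivalence for each $m$. Since $\sesp$ is cartesian, $W^{\Delta[m]^t}$ is automatically a Segal space, so the issue is entirely about completeness: the extra $E$-direction contributes only homotopy equivalences, which by the completeness of $W$ must be absorbed. My strategy would be to analyze the space of homotopy equivalences of $W^{\Delta[m]^t}$ directly, identify it through cartesian closure with a mapping object built from $W$, and then invoke completeness of $W$ together with the fact that being a homotopy equivalence is detected levelwise on the underlying Segal space. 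I expect this step — that exponentiating a complete Segal space preserves completeness — to be the main obstacle, since it is precisely the interaction between the completeness localization and the cartesian product that is not formal; once it is in hand, the remaining assembly into the pushout-corner axiom is routine.
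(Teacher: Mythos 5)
The paper offers no proof of this statement: it is quoted directly from Rezk \cite[7.2]{rezk}, and the surrounding discussion records only the recipe you follow, namely localize $\sesp$ at $E^t \rightarrow \Delta[0]^t$ and observe, via $\Map(E^t,X) \simeq X_{\heq}$, that the local objects are the complete Segal spaces. Measured against Rezk's actual proof, your skeleton is the right one: existence of the localization (left properness and cellularity of $\sesp$), identification of the fibrant objects, and reduction of the cartesian axiom to the claim that $E^t \times \Delta[m]^t \rightarrow \Delta[m]^t$ is a local equivalence for all $m \geq 0$, which by adjunction (and since $\sesp$ is cartesian, so that $W^{\Delta[m]^t}$ is a Segal space) says precisely that $W^{\Delta[m]^t}$ is \emph{complete} whenever $W$ is a complete Segal space.

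As a proof, however, there is a genuine gap, and you have flagged it yourself: this last claim is not a deferred verification but the entire mathematical content of the theorem; everything preceding it is formal localization machinery. In Rezk's paper it occupies the long technical closing sections. The argument first reduces from $\Delta[m]^t$ to $\Delta[1]^t$ via the spine inclusion $G[m]^t \rightarrow \Delta[m]^t$, so that $W^{\Delta[m]^t} \rightarrow W^{G[m]^t} \cong W^{\Delta[1]^t} \times_W \cdots \times_W W^{\Delta[1]^t}$ is a Reedy equivalence of Segal spaces, across which completeness passes; the crux is then a detection lemma for homotopy equivalences in the exponential: a point of $(W^{\Delta[1]^t})_1$, i.e.\ a square $\Delta[1]^t \times \Delta[1]^t \rightarrow W$, is a homotopy equivalence in $W^{\Delta[1]^t}$ if and only if its two edges, its images under the evaluation maps $W^{\Delta[1]^t} \rightarrow W$, are homotopy equivalences in $W$. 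Proving this requires genuinely constructing homotopy inverses inside a Segal space, and it is where the real work lies; your proposed strategy of analyzing the homotopy equivalences of $W^{\Delta[m]^t}$ and invoking levelwise detection presupposes exactly this lemma rather than proving it. (Note also that the identification $\Map(E^t,X) \simeq X_{\heq}$, which you take as given, is itself a nontrivial result of Rezk established with the same circle of techniques.) So what you have is a correct reduction to, and an accurate identification of, the hard step of Rezk's argument, but not yet a proof of the theorem.
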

	
	For Segal categories, the underlying category for the model structure is the category of \emph{Segal precategories}, or simplicial spaces $X$ with $X_0$ discrete, such that the fibrant objects are the Segal categories.  However, since this category does not admit a Reedy-like model structure with levelwise weak equivalences, we cannot obtain the Segal category model structure as a localization; see \cite[\S 3.12]{thesis} for more details.   Roughly speaking, the problem is that if we want levelwise weak equivalences and cofibrations that are monomorphisms, it is impossible to have the necessary factorizations. 
	Thus, to establish the desired model structure, we need a precise definition of the appropriate weak equivalences.
	
	To this end, let us return to the Segal space model structure for a moment.  Given any simplicial space $X$, we can take a functorial fibrant replacement of it in the model structure $\sesp$, in which the fibrant objects are the (not necessarily complete) Segal spaces.  Denoting this fibrant replacement by $L_{\Se}$, we make the following definition.
	
	\begin{definition} \label{dk}
		A map $f \colon X \rightarrow Y$ of simplicial spaces is a \emph{Dwyer-Kan equivalence} if:
		\begin{itemize}
			\item for any $x,y \in \ob(X)$, the map 
			\[ \map_{L_{\Se}X}(x,y) \rightarrow \map_{L_{\Se}Y}(fx,fy) \]
			is a weak equivalence of simplicial sets; and
			
			\item the induced functor 
			\[ \Ho(L_{\Se}X) \rightarrow \Ho(L_{\Se}Y) \]
			is essentially surjective.
		\end{itemize}
	\end{definition}
	
	This definition can be adapted to the setting of Segal precategories, using a suitable modification of the functor $L_{\Se}$ that retains the necessary discreteness condition \cite[\S 5]{thesis}; see also the arguments in Proposition \ref{discreteloc}.
	
	The importance of Dwyer-Kan equivalences is illustrated in the following results of Rezk.
	
	\begin{theorem} \cite[7.6, 7.7]{rezk} \label{dkincss}
		\begin{enumerate}
			\item \label{dkbetweensegalincss} 
			A map $X \rightarrow Y$ of Segal spaces is a Dwyer-Kan equivalence if and only if it is a weak equivalence in $\css$.
			
			\item \label{dkbetweencompleteincss} 
			A map $X \rightarrow Y$ of complete Segal spaces is a Dwyer-Kan equivalence if and only if it is a levelwise weak equivalence of simplicial sets.
		\end{enumerate}
	\end{theorem}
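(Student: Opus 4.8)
The plan is to prove both statements at once by comparing $f$ with its \emph{completion}. I will use the functor $c_X \colon X \to \widehat{X}$ sending a Segal space to a functorial $\css$-fibrant replacement; by the cited model structure $\widehat{X}$ is a complete Segal space and $c_X$ is a weak equivalence in $\css$, and functoriality places any $f \colon X \to Y$ into a commuting square with $\widehat{f} \colon \widehat{X} \to \widehat{Y}$. Before anything else I would record the easy structural facts. Since $L_{\Se}$ is a levelwise equivalence on objects already fibrant in $\sesp$, a Dwyer-Kan equivalence between Segal spaces is exactly a map inducing weak equivalences $\map_X(x,y) \to \map_Y(fx,fy)$ for all $x,y \in \ob(X)$ together with an essentially surjective functor $\Ho(X) \to \Ho(Y)$; these two conditions satisfy two-out-of-three. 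Any levelwise weak equivalence of Segal spaces is a Dwyer-Kan equivalence, because mapping spaces are homotopy pullbacks (hence preserved) and the homotopy categories become equivalent. Finally, by the general theory of left Bousfield localization, a $\css$-weak equivalence between complete Segal spaces, i.e.\ between $\css$-fibrant objects, is automatically a levelwise weak equivalence.

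The two technical inputs are the following lemmas. \emph{First}, the completion map $c_X \colon X \to \widehat{X}$ is a Dwyer-Kan equivalence. \emph{Second}, a Dwyer-Kan equivalence between complete Segal spaces is a levelwise weak equivalence. For the second lemma I would begin at level zero: completeness identifies $X_0$ with the subspace $X_{\heq} \subseteq X_1$ of homotopy equivalences, whose path components are the isomorphism classes of $\Ho(X)$ and whose component homotopy types are controlled by the self-mapping spaces $\map_X(x,x)$. Essential surjectivity gives surjectivity on $\pi_0$, and full faithfulness gives a weak equivalence on each matched component, so $f_0 \colon X_0 \to Y_0$ is a weak equivalence. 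Combining $X_0 \simeq Y_0$ with the mapping-space equivalences and the pullback defining $X_1$ over $X_0 \times X_0$ yields $X_1 \simeq Y_1$, and then the Segal maps express each $X_n$ as an iterated homotopy fiber product of copies of $X_1$ over $X_0$, so $X_n \simeq Y_n$ follows by induction on $n$.

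With these in hand the theorem assembles formally. For part \ref{dkbetweensegalincss}, if $f$ is a Dwyer-Kan equivalence, then two-out-of-three applied to the completion square, whose vertical maps are Dwyer-Kan equivalences by the first lemma, shows $\widehat{f}$ is a Dwyer-Kan equivalence between complete Segal spaces; the second lemma makes it levelwise, hence a $\css$-weak equivalence, and two-out-of-three for $\css$-weak equivalences (the vertical maps being such) returns that $f$ is a $\css$-weak equivalence. The converse reverses this: a $\css$-weak equivalence $f$ forces $\widehat{f}$ to be a $\css$-weak equivalence between $\css$-fibrant objects, hence levelwise, hence a Dwyer-Kan equivalence by the structural facts, and two-out-of-three for Dwyer-Kan equivalences (using the first lemma for the vertical maps) gives the claim for $f$. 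Part \ref{dkbetweencompleteincss} is then immediate: a levelwise weak equivalence is always a Dwyer-Kan equivalence, and conversely a Dwyer-Kan equivalence between complete Segal spaces is levelwise by the second lemma directly.

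The hard part is the first lemma, that completion preserves mapping spaces and induces an equivalence of homotopy categories. This is where the real content of Rezk's completeness construction lives: one must show that passing from $X$ to $\widehat{X}$ changes neither the mapping spaces nor the set of equivalence classes of objects, but only ``fattens'' the space of objects up to the space of homotopy equivalences. I would approach it by analyzing the $\css$-fibrant replacement as a transfinite composite of pushouts of the localizing map $E^t \rightarrow \Delta[0]^t$ together with Reedy-acyclic cofibrations, checking at each stage that the mapping spaces and the homotopy category are unaffected; the genuine subtlety is that completion does alter $X_0$ itself, so the invariance must be established at the level of $\Ho$ and of $\map$ rather than levelwise.
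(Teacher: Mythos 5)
You should know at the outset that the paper does not prove this theorem at all: it is quoted verbatim from Rezk's paper (his Theorems 7.6 and 7.7), so the only meaningful comparison is with Rezk's original argument. Measured against that, your architecture is exactly his: a completion functor $X \mapsto \widehat{X}$, the two lemmas (the completion map $X \to \widehat{X}$ is a Dwyer-Kan equivalence; a Dwyer-Kan equivalence between complete Segal spaces is a levelwise equivalence), and the formal two-out-of-three assembly. Your sketch of the second lemma is essentially Rezk's proof of 7.6 --- completeness reduces the problem to $X_{\heq}$, whose components see the isomorphism classes in $\Ho(X)$ and whose homotopy types are controlled by the mapping spaces, after which $X_1$ and then all $X_n$ follow from the pullback over $X_0 \times X_0$ and the Segal maps --- and your assembly of both parts from the two lemmas is sound.

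The gap is in the first lemma, which you rightly identify as carrying all the content but for which your proposed method is not the one that works. Rezk does not obtain $\widehat{X}$ by running the small object argument and tracking cells; he constructs the completion explicitly (roughly, as a Reedy fibrant replacement of the diagonal of the bisimplicial object $[m] \mapsto X^{E(m)}$, where $E(m)$ is the discrete nerve of the contractible groupoid on $m+1$ objects) and proves directly, using concrete properties of homotopy equivalences in Segal spaces, that $X \to \widehat{X}$ is a Dwyer-Kan equivalence. Your alternative --- a transfinite composite of pushouts along the localizing maps, checking invariance of $\map$ and $\Ho$ at each stage --- faces two obstacles you do not address. First, the intermediate objects in such a composite are not Segal spaces, so their mapping spaces and homotopy categories exist only after applying $L_{\Se}$, and there is no practical control over what $L_{\Se}$ does to a pushout along a cell that collapses a copy of $E$; showing that a single such pushout is a Dwyer-Kan equivalence is essentially the original problem over again. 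Second, even granting each stage, you would need Dwyer-Kan equivalences to be closed under transfinite composition, which is not formal from the definition (it requires compatibility of $L_{\Se}$, of the mapping spaces, and of $\Ho$ with filtered colimits). It is telling that the present paper flags exactly this point: in proving the $\Theta_n$-analogue (Theorem \ref{dkwkequiv}) it remarks that imitating Rezk's explicit completion construction ``would be a daunting task'' and instead deduces the result from the Quillen equivalence with complete Segal objects, precisely to avoid the step you have deferred.
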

	
	In particular, if we want a model structure whose weak equivalences between Segal categories behave like the weak equivalences of $\css$, the Dwyer-Kan equivalences provide good candidates.
	
	\begin{theorem} \cite[5.1]{thesis}, \cite{pel}
		There is a cartesian model structure $\Secat$ on the category of Segal precategories in which the weak equivalences are the Dwyer-Kan equivalences and the fibrant objects are the Segal categories.
	\end{theorem}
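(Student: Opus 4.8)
The plan is to build the model structure directly via Kan's recognition theorem (Theorem \ref{cofgen}), since, as noted above, the discreteness requirement on $X_0$ obstructs obtaining $\Secat$ as a localization of a Reedy-type structure. I first observe that the category of Segal precategories is bicomplete: it is the full subcategory of simplicial spaces on those $X$ with $X_0$ discrete, and this subcategory is closed under both limits and colimits computed in simplicial spaces, because a limit or a colimit of discrete simplicial sets is again discrete. The class of Dwyer--Kan equivalences (Definition \ref{dk}, in its discreteness-preserving form) satisfies two-out-of-three and is closed under retracts; both properties are inherited from the corresponding properties for weak equivalences of the mapping spaces $\map_{L_{\Se}X}(x,y)$ and for essential surjectivity of the induced functors on homotopy categories, using functoriality of the discrete analogue of $L_{\Se}$.

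Next I choose the two generating sets. For $I$ I take the maps generating the monomorphisms of Segal precategories, namely $\emptyset \to \Delta[0]^t$ together with the pushout-product inclusions
\[ (\partial\Delta[m] \times \Delta[k]^t) \amalg_{\partial\Delta[m] \times \partial\Delta[k]^t} (\Delta[m] \times \partial\Delta[k]^t) \to \Delta[m] \times \Delta[k]^t \]
for $m \geq 0$ and $k \geq 1$; one must omit the cells $\partial\Delta[m] \times \Delta[0]^t \to \Delta[m] \times \Delta[0]^t$ with $m \geq 1$, precisely because attaching them would introduce nondegenerate simplices at level $0$ and so leave the subcategory of Segal precategories. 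With this choice every object is cofibrant and the terminal object $\Delta[0]^t$ is cofibrant, as needed for the cartesian property. For $J$ I take a set of monomorphisms whose right lifting property characterizes the Segal categories, assembled from the discreteness-preserving generating acyclic cofibrations of the Reedy structure (forcing Reedy fibrancy) together with the pushout-products of the Segal inclusions $G[k]^t \to \Delta[k]^t$ with the boundary inclusions $\partial\Delta[m] \to \Delta[m]$ (forcing the Segal condition). Each generator in $J$ is a Dwyer--Kan equivalence: a Reedy acyclic cofibration is in particular a weak equivalence of $\sesp$, the stated pushout-products are acyclic cofibrations of $\sesp$ by its cartesian structure, and any $\sesp$-weak equivalence is a Dwyer--Kan equivalence since $L_{\Se}$ carries it to a levelwise weak equivalence of Segal spaces.

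It then remains to verify conditions \ref{cofgen1}--\ref{cofgen4}. Condition \ref{cofgen1} holds because all the generators have small domains. For condition \ref{cofgen3}, an $I$-injective is, in positive degrees, a levelwise acyclic fibration and, at level $0$, a surjection; such a map is a Dwyer--Kan equivalence (the acyclic fibrations on $X_1$ give the mapping-space equivalences and the surjectivity on objects gives essential surjectivity), and it is a $J$-injective once $J$ consists of monomorphisms. The real difficulty lies in conditions \ref{cofgen2} and \ref{cofgen4}. For \ref{cofgen2} one must show that the class of acyclic cofibrations --- monomorphisms that are Dwyer--Kan equivalences --- is closed under the pushouts and transfinite compositions generating the $J$-cofibrations; this is a left-properness-type statement for Dwyer--Kan equivalences and is the main homotopical obstacle. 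For \ref{cofgen4}, in the form \ref{cofgen4b}, one must show that a $J$-injective which is also a Dwyer--Kan equivalence is an $I$-injective, i.e.\ identify the acyclic fibrations, amounting to showing that a Segal fibration between Segal precategories that is a Dwyer--Kan equivalence is levelwise acyclic in positive degrees and surjective on objects.

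Both obstacles are handled using the fibrant-replacement functor on Segal precategories that performs the localization while preserving discreteness at level $0$, the discrete analogue of $L_{\Se}$ constructed in \cite{thesis}, together with the characterization of Dwyer--Kan equivalences between Segal spaces as the weak equivalences of $\css$ from Theorem \ref{dkincss}, which lets one transport the needed homotopical facts from the already-established cartesian model structure $\css$. Finally, the cartesian property follows from Definition \ref{cartdef}: the pushout-product of two monomorphisms of Segal precategories is again such a monomorphism, and its acyclicity when one factor is acyclic is deduced from the corresponding statement in $\sesp$ together with the fact that Dwyer--Kan equivalences are preserved under products with a fixed cofibrant object.
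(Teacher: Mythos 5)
Your overall skeleton (Kan's recognition principle, Theorem \ref{cofgen}, with $I$ given by discretized Reedy cells) matches the proof of \cite[5.1]{thesis} and its generalization proved in this paper as Theorem \ref{discretthetamc}. The fatal divergence is your choice of $J$. Every map in your explicit $J$ --- the discreteness-preserving Reedy acyclic cells (which exist only for $q \geq 1$) and the pushout-products of $G[k]^t \rightarrow \Delta[k]^t$ with $\partial\Delta[m] \rightarrow \Delta[m]$ --- is an \emph{isomorphism at level $0$}: this is exactly what makes them compatible with discreteness, since $\partial\Delta[q]^t$ and $G[k]^t$ contain all vertices of $\Delta[q]^t$ and $\Delta[k]^t$. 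Consequently every $J$-cofibration for your $J$ is a bijection on the object set. But in the model structure being built, the acyclic cofibrations must be all monomorphisms that are Dwyer-Kan equivalences, and these need not be bijective on objects: the inclusion $\{x\} \colon \Delta[0]^t \rightarrow E^t$ of one object into the discrete nerve of the free-standing isomorphism is a monomorphism of Segal precategories and a Dwyer-Kan equivalence (every mapping space of $E^t$ is a point, and $x \cong y$ in $\Ho(E^t)$), yet it is not surjective at level $0$. Since in any cofibrantly generated model structure the $J$-cofibrations are precisely the acyclic cofibrations, no set $J$ consisting of level-$0$ isomorphisms can work; this rules out condition \eqref{cofgen4}(a).

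The same map defeats condition \eqref{cofgen4}(b), the version you chose to verify. Because $E$ is $0$-coskeletal, the matching maps $E^t_k \rightarrow \Map(\partial\Delta[k]^t, E^t)$ and the Segal maps of $E^t$ are bijections of discrete simplicial sets, so all the relative lifting conditions encoded by your $J$ degenerate, for the map $\{x\} \colon \Delta[0]^t \rightarrow E^t$, to isomorphisms between points: this map \emph{is} $J$-injective for your $J$, and it is a Dwyer-Kan equivalence, but it is not $I$-injective, since $I$-injectivity forces surjectivity at level $0$ (lifting against $\emptyset \rightarrow \Delta[0]^t$). So the statement you set out to prove in step \eqref{cofgen4}(b) --- that a ``Segal fibration'' which is a Dwyer-Kan equivalence is levelwise acyclic in positive degrees and surjective on objects --- is false: essential surjectivity upgrades to genuine surjectivity only in the presence of an isofibration-type lifting condition that your $J$ does not impose. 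This is the Segal-category incarnation of the fact that an inner fibration between quasi-categories which is a categorical equivalence need not be a trivial fibration, and it is precisely why no explicit set of generating acyclic cofibrations is known for $\Secat$. The proof in \cite[\S 5]{thesis}, like the proof of Theorem \ref{discretthetamc} here, instead takes $J$ to be a set of representatives of \emph{all} monomorphisms that are Dwyer-Kan equivalences and have countably many simplices; the cardinality argument (Proposition \ref{acycliccofs}\eqref{thesis5.7} here, \cite[5.7]{thesis}) shows that every acyclic cofibration is a directed colimit of pushouts of maps in this bounded $J$ --- note that $\Delta[0]^t \rightarrow E^t$ itself belongs to it --- and this is what makes both halves of Proposition \ref{thesis5.8} provable. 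A secondary point: your generating cofibrations as written are not maps of Segal precategories (the level $0$ of $\Delta[m] \times \Delta[k]^t$ is not discrete), so the discretization functor must be applied to them, as in the set $I^{n,T}$ of this paper.
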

	
	The common notion of Dwyer-Kan equivalence in the model structures $\css$ and $\Secat$ is key to the proof of the following theorem.
	
	\begin{theorem} \cite[6.3]{thesis} \label{secatcss}
		The inclusion functor from the category of Segal precategories into the category of all simplicial spaces has a right adjoint, and this adjunction induces a Quillen equivalence
		\[ \Secat \rightleftarrows \css. \]
	\end{theorem}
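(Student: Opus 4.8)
The plan is to construct the right adjoint, check that the inclusion is left Quillen, and then reduce the Quillen equivalence to a single computation of the counit. Write $i$ for the inclusion of Segal precategories into $\SSets^{\Deltaop}$. Since a colimit of discrete simplicial sets is again discrete, the subcategory of Segal precategories is closed under colimits computed in $\SSets^{\Deltaop}$, so $i$ preserves all colimits; as both categories are locally presentable, the adjoint functor theorem yields a right adjoint $R$. Explicitly, one may take $(RY)_0$ to be the discrete set $Y_{0,0}$ of objects of $Y$ and $(RY)_n = Y_n \times_{(Y_0)^{n+1}} (Y_{0,0})^{n+1}$, the pullback along the $(n+1)$ vertex maps, with counit $\epsilon \colon iRY \to Y$ the evident levelwise map. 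As $i$ is fully faithful, the unit of the adjunction is a natural isomorphism.

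To see that $(i \dashv R)$ is a Quillen pair it is enough to show $i$ is left Quillen. The cofibrations of both $\Secat$ and $\css$ are the monomorphisms, which $i$ preserves. The weak equivalences of $\Secat$ are the Dwyer-Kan equivalences, and, combining Definition~\ref{dk} with Theorem~\ref{dkincss}(\ref{dkbetweensegalincss}), the weak equivalences of $\css$ are precisely the Dwyer-Kan equivalences of simplicial spaces; since the Dwyer-Kan equivalence of a Segal precategory agrees with the one computed in $\SSets^{\Deltaop}$ (compare the arguments for Proposition~\ref{discreteloc}), the inclusion $i$ both preserves and reflects weak equivalences. In particular it sends acyclic cofibrations to acyclic cofibrations, so it is left Quillen.

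Recall that every object of $\Secat$ is cofibrant. Given the reflection property above, the standard recognition criterion for Quillen equivalences then reduces the theorem to a single statement: \emph{for every complete Segal space $Y$, the counit $\epsilon \colon iRY \to Y$ is a weak equivalence in $\css$}, that is, a Dwyer-Kan equivalence. To prove this I would argue directly from the description of $R$. By construction the discrete object set of $RY$ is exactly $Y_{0,0}$, so $\epsilon$ is a bijection on objects and the induced functor $\Ho(L_{\Se}RY) \to \Ho(L_{\Se}Y)$ is essentially surjective. Since $Y$ is Reedy fibrant, the map $Y_1 \to Y_0 \times Y_0$ is a fibration, so for objects $x,y$ the pullback defining $(RY)_1$ identifies $\map_{RY}(x,y)$ with the homotopy mapping space $\map_Y(x,y)$; moreover the Segal maps of $RY$ arise from those of $Y$ by restricting to fibers over the discrete object set, hence are weak equivalences. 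Thus $\epsilon$ induces an isomorphism on homotopy categories and a weak equivalence on each mapping space, i.e.\ a Dwyer-Kan equivalence.

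The hard part will be this last claim, and the subtlety is that $RY$ need not be Reedy fibrant, whereas Dwyer-Kan equivalences are defined via the Segal-space replacement $L_{\Se}$. One must therefore verify that passing from $RY$ to $L_{\Se}RY$ changes neither the mapping spaces nor the set of components of the homotopy category, in other words that the point-set pullback defining $R$ already computes the correct homotopy mapping spaces. This is where Reedy fibrancy of the complete Segal space $Y$ is used, to ensure the relevant pullbacks are homotopy pullbacks and that the Segal condition for $RY$ may be checked fiberwise over the discrete object set. Once this is in place, the recognition criterion applies directly and yields the Quillen equivalence $\Secat \rightleftarrows \css$; completeness of the fibrant objects enters only through Theorem~\ref{dkincss}(\ref{dkbetweencompleteincss}), which certifies that weak equivalence is being detected correctly on them.
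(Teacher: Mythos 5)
Your proposal is correct and takes essentially the same approach as the paper's source for this theorem and as the generalized version proved in Section 9 here: your explicit formula $(RY)_n = Y_n \times_{(Y_0)^{n+1}} (Y_{0,0})^{n+1}$ is exactly the coskeleton pullback describing $R$ after the theorem statement, the Quillen pair argument via monomorphisms and the shared notion of Dwyer-Kan equivalence is the same, and the reduction to showing the counit $iRY \to Y$ is a Dwyer-Kan equivalence for a complete Segal space $Y$ — checked fiberwise over the discrete object set using Reedy fibrancy of $Y$ — is precisely the homotopy-fiber comparison carried out in the proof of the Quillen equivalence in Section 9 (diagram \eqref{fibers}). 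The technical point you defer (that $RY$ already satisfies the Segal condition, so that the passage to $L_{\Se}RY$ does not change mapping spaces or objects) is handled there in the same way, via the observation that the pullback defining $RW$ inherits the Segal conditions from the fibrant $W$.
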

	
	The right adjoint functor $R$ serves as a discretization functor, and can be described on objects as follows.  Let $W$ be a simplicial space.  Let $U=\cosk_0(W)$ be the 0-coskeleton of $W$, and $V=U_{*,0}$, the discrete simplicial space given by the 0-simplices in each degree of $U$. Alternatively, $V = \cosk_0(W_{*,0})$, where $W_{*,0}$ denotes the discrete simplicial space consisting of the zero simplices in each degree of $W$. Then $RW$ is defined to be the pullback 
	\[ \xymatrix{RW \ar[r] \ar[d] & V \ar[d] \\
		W \ar[r]& U,} \]
	where $V \rightarrow U$ is the inclusion and $W \rightarrow U$ is the canonical map from the coskeleton.
	
	\section{$\Theta_n$-spaces} \label{thetanreview}
	
	In this section, we review the definition of $\Theta_n$-spaces, which serve as a model for higher-categorical complete Segal spaces, and summarize some of the key constructions that we need here.  
	The categories $\Theta_n$ were originally described by Joyal, using a direct definition \cite{joyal}.  Here we have chosen to use their inductive description via the $\Theta$-construction of Berger \cite{berger}, which is also described by Rezk in \cite[3.2]{rezktheta}.
	
	\begin{definition}
		Let $\mathcal C$ be a small category.  Define $\Theta \mathcal C$ to be the category with objects $[m](c_1, \ldots, c_m)$ where $[m]$ is an object of $\Delta$ and each $c_i$ is an object of $\mathcal C$.  A morphism
		\[ [q](c_1, \ldots ,c_q) \rightarrow [m](d_1, \ldots, d_m) \] is given by $(\delta, \{f_{ij}\})$ where $\delta \colon [q] \rightarrow [m]$ in $\Delta$ and $f_{ij} \colon c_i \rightarrow d_j$ are morphisms in $\mathcal C$ indexed by $1 \leq i \leq q$ and $1 \leq j \leq m$ where $\delta(i-1) < j \leq \delta (i)$.
	\end{definition}
	
	Let us use this definition to describe our categories of interest here.
	
	\begin{definition}
		Let $\Theta_0$ be the terminal category with a single object and no non-identity morphisms.  Inductively define $\Theta_n=\Theta \Theta_{n-1}$. 
	\end{definition}
	
	Observe that $\Theta_1=\Delta$.  To build some intuition about $\Theta_n$ for higher $n$, let us look more closely at $\Theta_2 = \Theta \Delta$. Its objects are of the form $[q]([c_1], \ldots, [c_q])$, where $[q]$, as well as each $[c_i]$, is an object of $\Delta$. We can think of this object as being a copy of the diagram $[q]$ whose arrows are labeled by $[c_1], \ldots, [c_q]$.  For example, the object $[4]\left([2], [3], [0], [1]\right)$ can be depicted as
	\[ \xymatrix@1{0 \ar[r]^{[2]} & 1 \ar[r]^{[3]} & 2 \ar[r]^{[0]} & 3\ar[r]^{[1]} & 4.} \]
	Since these labels themselves can be visualized as strings of arrows, we can further illustrate our object as
	\[\xymatrix{ {0} \ar@/^2pc/[r]^{}="10"  \ar[r]^{}="11" \ar@/_2pc/[r]^{}="12" \ar@{=>}"10";"11" \ar@{=>}"11";"12"
		& {1} \ar@/^3pc/[r]^{}="20" \ar@/^1pc/[r]^{}="21"
		\ar@/_1pc/[r]^{}="22" \ar@/_3pc/[r]^{}="23" \ar@{=>}"20";"21"
		\ar@{=>}"21";"22" \ar@{=>}"22";"23"
		& {2} \ar[r]
		& {3} \ar@/^1pc/[r]^{}="30" \ar@/_1pc/[r]^{}="31" \ar@{=>}"30";"31"
		& {4.}
	}\]
	This diagram can be regarded as generating a strict 2-category by composing 1-cells and 2-cells whenever possible.  In other words, the objects of $\Theta_2$ can be seen as encoding all possible finite composites, whether horizontal or vertical, that can take place in a 2-category, much as the objects of $\Delta$ can be thought of as listing all the finite composites that can occur in an ordinary category.
	
	\begin{example} \label{onenotation}
		Of key importance in this paper are the objects of $\Theta_n$ given by a single morphism, or free-standing ``cell", of each dimension up to $n$.  For example, we have a single object $\bullet$, a single 1-cell $\bullet \rightarrow \bullet$, and a single 2-cell
		\[\xymatrix{{\bullet} \ar@/^1pc/[r]^{}="30" \ar@/_1pc/[r]^{}="31" \ar@{=>}"30";"31"
			& {\bullet.} }\]
		These objects are denoted by $[0]$, $[1]([0])$, and $[1]([1])$ as objects of $\Theta_2$.  More generally, in $\Theta_n$ we have objects $[1]([1]( \cdots ([0])) \cdots)$
		where we have up to $n-1$ occurrences of $[1]$ concluding with a $[0]$, and finally the object 
		\[ [1]([1](\cdots [1]) \cdots). \]
		These objects can be depicted as the free-standing cells of increasing dimension, starting with dimension 0 (an ``object") and going up through dimension $n$ (an ``$n$-morphism" or ``$n$-cell"). 
		
		To simplify the notation, we often write $[1]^{(0)}$ for $[0]$, and then $[1]^{(i)}=[1]([1]^{(i-1)})$ for any $0<i<n$.  
		Observe that $[0]$ is the terminal object in any $\Theta_n$, and likewise that $[1]^{(i)}$ can be considered as an object of $\Theta_n$ for any $n >i$.  While there is some ambiguity here about what $n$ is, the idea is simply that we have $i$-morphisms in any $n$-category for $n>i$.
		
		We include in our notation the free-standing $n$-cell, which corresponds to the object $[1]([1](\cdots [1]) \cdots)$ of $\Theta_n$, and denote this object by $[1]^{(n)}$.  This notation is in potential conflict with the above, if we move from $\Theta_n$ to $\Theta_k$ for some $k>n$, but the context should make this distinction, especially since the essential shape remains the same.  Indeed, this object is simply given by 
		\[ \underbrace{[1]([1](\cdots [1]}_n([0])) \cdots) \] 
		in $\Theta_k$ when $k>n$.
	\end{example}
	
	Let us consider $\Theta_n$-\emph{sets}, which are functors $\Thetanop \rightarrow \Sets$. For any object $[q](c_1, \ldots, c_q)$, let $\Theta[q](c_1, \ldots, c_q)$ denote the representable functor $\Hom_{\Theta_n}(-,[q](c_1, \ldots, c_q))$.
	
	Since we want to generalize complete Segal spaces, we are more interested in functors $\Thetanop \rightarrow \SSets$.  Observe that any simplicial set can be regarded as a constant functor of this kind, and any functor $\Thetanop \rightarrow \Sets$, in particular the representable functors just described, can be regarded as levelwise discrete functors to $\SSets$.  
	
	The category $\Thetanop$ is a Reedy category, with the degree of an object $[m](c_1, \ldots, c_m)$ defined inductively as the sum of $m$ with the degrees of the objects $c_i$ in $\Theta_{n-1}$ \cite[3.14]{berger}.  Hence, the functor category $\SSets^{\Thetanop}$ can be equipped with the Reedy model structure, which we prove in \cite[3.10]{elegant} agrees with the injective model structure.  In particular, the cofibrations are the levelwise monomorphisms of simplicial sets, and every object is cofibrant.  
	
	We recall from \cite[\S 6.1]{inftyn1} that a set of generating cofibrations of this model structure are given by
	\[ \partial \Delta[m] \times \Theta[q](c_1, \ldots, c_q) \cup \Delta[m] \times \partial \Theta[q](c_1, \ldots, c_q) \rightarrow \Delta[m] \times \Theta[q](c_1, \ldots, c_q) \]
	where $m, q \geq 0$ and $c_1, \ldots, c_q \in \ob(\Theta_{n_1})$, and where $\partial \Theta[q](c_1, \ldots, c_q)$ denotes the boundary of the representable object $\Theta[q](c_1, \ldots, c_q)$, defined by mapping out of objects of strictly lower degree than that of $[q](c_1, \ldots, c_q)$.  The domain of this map is the union along the intersection of the two spaces, $\partial \Delta[m] \times \partial \Theta[q](c_1, \ldots, c_q)$, but here and in what follows we omit this additional notation for the sake of brevity.  Note that the simplicial sets and $\Theta_n$-sets appearing in this definition are appropriately constant in the $\Theta_n$ and simplicial directions, respectively. A set of generating acyclic cofibrations can be defined similarly, replacing the boundaries $\partial \Delta[m]$ with horns $\Lambda^k[m]$ for $m \geq 1$ and $0 \leq k \leq m$.
	
	To obtain models for $(\infty,n)$-categories, we want to ask for appropriate Segal maps to be weak equivalences, and so we generalize the development of Segal spaces as follows.
	
	Given $q \geq 2$ and $c_1, \ldots, c_q$ objects of $\Theta_{n-1}$, define the object
	\[ G[m](c_1, \ldots, c_m)= \colim (\Theta[1](c_1) \leftarrow \Theta[0] \rightarrow \cdots \leftarrow \Theta[0] \rightarrow \Theta[1](c_m)). \]
	There is an inclusion map
	\[\se^{(c_1, \ldots, c_q)} \colon G[q](c_1, \ldots, c_q) \rightarrow \Theta[q](c_1, \ldots, c_q). \]  Now define the set
	\[ \Se_{\Theta_n} = \{ \se^{(c_1, \ldots, c_q)} \mid q \geq 2, c_1, \ldots c_q \in \ob(\Theta_{n-1})\}. \]
	
	\begin{example}
		Referring to the example of the object
		\[ [4]\left([2], [3], [0], [1]\right) \] of $\Theta_2$ above, we have the representable functor
		\[ \Theta[4]\left([2], [3], [0], [1]\right). \]  
		The corresponding functor
		\[ G[4]\left([2], [3], [0], [1]\right) \] 
		consists of the union of the representable functors $\Theta[1]([2])$, $\Theta[1]([3])$, $\Theta[1]([0])$, and $\Theta[1]([1])$, glued together along the representable functors corresponding to the intersection points, each given by $\Theta[0]$.  
		If we localize with respect to the inclusion
		\[ G[4]\left([2], [3], [0], [1]\right) \rightarrow \Theta[4]\left([2], [3], [0], [1]\right), \]
		then an object $X$ is local if having these vertical composites guarantees the existence of all horizontal composites of 1-cells and 2-cells.  Namely, the induced map
		\[ X[4]([2], [3], [0], [1]) \rightarrow X[1]([2]) \times_{X[0]} X[1]([3]) \times_{X[0]} X[1]([0]) \times_{X[0]} X[1]([1]) \]
		is a weak equivalence of simplicial sets.
		
		However, such a localization only gives us horizontal composition, not vertical composition.  For example, we also want the map
		\[ X[1]([2]) \rightarrow X[1]([1]) \times_{X[1]([0])} X[1]([1]) \]
		to be a weak equivalence of simplicial sets.   
	\end{example}
	
	The previous example illustrates that being local with respect to the maps in $Se_{\Theta_n}$ is not sufficient when $n>1$, as it only gives an up-to-homotopy composition horizontally.  Encoding other levels of composition is achieved inductively, making use of the intertwining functor $V[1] \colon \SSets^{\Theta_{n-1}^{op}} \rightarrow \SSets^{\Thetanop}$ to translate a set $\mathcal S$ of maps in $\SSets^{\Theta_{n-1}^{\op}}$ into a set $V[1](\mathcal S)$ of maps in $\SSets^{\Thetanop}$.  Let us briefly recall this functor; full details can be found in \cite[\S 4.4]{rezktheta}.  
	
	Given a functor $A \colon \Theta_{n-1}^{\op} \rightarrow \SSets$, define $V[1](A) \colon \Thetanop \rightarrow \SSets$ by
	\[ [q](c_1, \ldots, c_q) \mapsto \coprod_{\delta \colon [q] \to [1]} \prod_{i=1}^q A(c_i). \]
	The idea is that $V[1](A)$ models a category enriched in $\SSets^{\Theta_{n-1}^{\op}}$, with two objects $x$ and $y$ and one nontrivial mapping object from $x$ to $y$ given by $A$.  The mapping object from $y$ to $x$ is empty, and the mapping objects at $x$ and $y$ each consist of an identity morphism only.
	
	Let $\mathcal S_1=\Se_{\bf \Delta} = \{G(n)^t \rightarrow \Delta[n]^t \mid n \geq 2\}$, and for $n \geq 2$, inductively define $\mathcal S_n=\Se_{\Theta_n} \cup V[1](\mathcal S_{n-1})$.  Thus, in $\SSets^{\Thetanop}$, we have $n$ different Segal conditions, corresponding to the desired composition in each of the $n$ categorical levels.
	
	\begin{theorem} \cite[8.5]{rezktheta}
		Localizing the Reedy model structure $\SSets^{\Thetanop}$ with respect to $\mathcal S_n$ results in a cartesian model category whose fibrant objects are higher-order analogues of Segal spaces.
	\end{theorem}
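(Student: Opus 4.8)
The plan is to treat the statement as a left Bousfield localization together with a verification of the cartesian axiom, proceeding by induction on $n$. The first task, existence of the localized model structure, is essentially formal. The Reedy model structure on $\SSets^{\Thetanop}$ is cofibrantly generated (indeed combinatorial, being a presheaf category) and left proper, since by elegance of $\Thetanop$ its cofibrations are the levelwise monomorphisms and every object is cofibrant. The left Bousfield localization at the set $\mathcal S_n$ therefore exists by the localization theorem \cite[4.1.1]{hirsch}, with the same cofibrations, the $\mathcal S_n$-local equivalences as weak equivalences, and --- automatically --- the $\mathcal S_n$-local objects as its fibrant objects. It then remains only to recognize these local objects and to check that the result is cartesian.

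To identify the fibrant objects, I would unwind what it means for a Reedy fibrant $X$ to be local with respect to each generator. Locality with respect to $\Se_{\Theta_n}$ says precisely that the top-level Segal maps $X[q](c_1,\ldots,c_q) \to X[1](c_1)\times_{X[0]}\cdots\times_{X[0]}X[1](c_q)$ are weak equivalences, the $\Theta_n$-analogue of the defining condition for a Segal space. Locality with respect to $V[1](\mathcal S_{n-1})$ encodes the remaining $n-1$ levels of composition: using the defining adjunction for the intertwining functor $V[1]$, locality of $X$ against a map $V[1](f)$ translates into locality of an associated $\Theta_{n-1}$-diagram against $f$, so that by the inductive description of $\mathcal S_n$ the fibrant objects are exactly those satisfying all $n$ Segal conditions, i.e. the higher-order analogues of Segal spaces.

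For the cartesian property, recall Definition \ref{cartdef}. The underlying category $\SSets^{\Thetanop}$ is a presheaf topos, hence cartesian closed, and its terminal object is cofibrant; moreover the pushout-corner map of two cofibrations is again a cofibration, since monomorphisms are closed under the pushout-product in a presheaf category. None of these depend on the localization, as the cofibrations are unchanged. The entire content is therefore the acyclicity half: if $f$ and $g$ are cofibrations and one of them is an $\mathcal S_n$-local equivalence, then the pushout-corner map is an $\mathcal S_n$-local equivalence. By the standard criterion for cartesian-ness of a left Bousfield localization of a cartesian model category, this reduces to showing that for each generator $s \colon A \to B$ in $\mathcal S_n$ and each representable object $\Delta[m] \times \Theta[p](d_1,\ldots,d_p)$, the map $s \times \id$ is an $\mathcal S_n$-local equivalence.

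This last reduction is where the real work lies, and it is the step I expect to be the main obstacle. I would split it according to the two families making up $\mathcal S_n$. For $s \in \Se_{\Theta_n}$ the task is a combinatorial analysis of products of $\Theta_n$-representables, showing that multiplying a Segal inclusion $G[q](c_1,\ldots,c_q)\to\Theta[q](c_1,\ldots,c_q)$ by a representable again factors through Segal inclusions up to $\mathcal S_n$-local equivalence; this is the direct generalization of the corresponding computation for the spine inclusions that underlies the cartesian-ness of $\sesp$ in the case $n=1$ \cite[7.1]{rezk}, which serves as the base of the induction. For $s \in V[1](\mathcal S_{n-1})$ I would exploit the compatibility of the intertwining functor with cartesian products, reducing $s \times \id$ to a product statement one categorical level down and then invoking the inductive hypothesis that the localization of $\SSets^{\Theta_{n-1}^{\op}}$ at $\mathcal S_{n-1}$ is cartesian. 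Controlling the combinatorics of these products --- in particular keeping track of how $V[1]$ interacts with products of representables --- is the delicate part of the argument.
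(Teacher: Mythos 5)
The paper itself contains no proof of this statement---it is quoted directly from Rezk \cite{rezktheta}---so the only meaningful comparison is with Rezk's own argument, and your outline follows it essentially step for step: the formal existence of the left Bousfield localization of the Reedy (= injective) structure, the level-by-level identification of the local objects (where the mapping-space decomposition formula for $V[1]$ plays exactly the role you assign to its ``defining adjunction,'' though it is not literally an adjunction), and the reduction of cartesianness to checking that $s \times \id$ of a representable is an $\mathcal S_n$-local equivalence for each generator $s$, which is precisely Rezk's criterion for when a localization of a cartesian model category remains cartesian. You have also located the genuine difficulty in the right place: $V[1]$ does not preserve products, and the combinatorial decomposition of products such as $\Theta[1](c) \times \Theta[1](d)$ into colimits of simpler pieces is the technical heart of Rezk's paper---delicate enough that his original treatment of this step required a published correction. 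So the proposal is structurally sound and matches the cited source's proof; what it defers to ``the delicate part'' is exactly the content that Rezk's sections on products of $\Theta_n$-presheaves supply.
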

	
	We denote this model structure by $\Theta_n\sesp$ and refer to its fibrant objects as \emph{$\Theta_n$-Segal spaces}.
	
	However, to get models for $(\infty,n)$-categories, we want to incorporate higher-order completeness conditions as well.  Again, we localize with respect to some maps, and to do so, we make use of the underlying simplicial space of a functor $\Thetanop \rightarrow \SSets$.  
	
	Consider the functor $\tau_\Theta \colon {\bf \Delta} \rightarrow \Theta_n$ defined by
	\[ \tau_\Theta[k] = [k]([0], \ldots, [0]), \]  
	which by \cite[4.1]{rezktheta} induces a Quillen pair on Reedy model structures
	\begin{equation} \label{underlying}
		(\tau_\Theta)_\# \colon \SSets^{\Deltaop} \rightleftarrows \SSets^{\Thetanop} \colon \tau_\Theta^*.
	\end{equation} 
	The functor $\tau_\Theta^*$ takes a functor $X \colon \Thetanop \rightarrow \SSets$ to its \emph{underlying simplicial space}.  Recalling that complete Segal spaces are defined by localizing with respect to 
	\[ \Cpt_{\bf \Delta}= \{E^t \rightarrow \Delta[0]^t\}, \] 
	for $n \geq 2$ we use the left adjoint functor $(\tau_\Theta)_\#$ to define 
	\[ \Cpt_{\Theta_n}=\{(\tau_\Theta)_\# E^t \rightarrow (\tau_\Theta)_\# \Delta[0]^t\}. \]  
	Just as for the Segal conditions, localizing with respect to this map only encodes one completeness condition.  Specifically, an object $X \colon \Thetanop \rightarrow \SSets$ is local with respect to this map when
	\[ X[0] \simeq X[1]([0])_{\heq}, \]
	where the space on the right-hand side is a suitable space of homotopy equivalences.  It can be defined directly in a way similar to the definition for complete Segal spaces, or more formally as $\Map((\tau_\Theta)_\# E, X)$.  To capture the other necessary completeness conditions, namely that
	\[ X[1]^{(i)} \simeq X[1]^{(i+1)}_{\heq} \]
	for $0<i<n$, we use the intertwining functor.  For example, when $i=2$, we can define 
	\[ X[1]^{(2)}_{\heq} = \Map(V[1]((\tau_\Theta)_\# E^t), X). \]
	
	Combining with the Segal maps, let 
	\[ \mathcal T_1=\Se_{\Delta} \cup \Cpt_{\Delta} \] 
	and, for $n \geq 2$,
	\[ \mathcal T_n=\Se_{\Theta_n} \cup \Cpt_{\Theta_n} \cup V[1](\mathcal T_{n-1}), \]
	again recalling that $\Theta_1=\Delta$.
	
	\begin{theorem} \cite[8.1]{rezktheta} \label{thetanmc}
		Localizing $\SSets^{\Thetanop}$ with respect to the set $\mathcal T_n$ results in a cartesian model category that we denote by $\Theta_n\css$.
	\end{theorem}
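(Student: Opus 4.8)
The statement cites Rezk, but here is how I would organize a proof. The plan is to build on the fact that the Reedy model structure on $\SSets^{\Thetanop}$, which agrees with the injective one, is already cartesian, and to show that left Bousfield localization at $\mathcal{T}_n$ preserves this. First I would record that the localization exists at all: the Reedy structure is left proper (every object is cofibrant), simplicial, and combinatorial (as $\Theta_n$ is a small elegant Reedy category and $\SSets$ is combinatorial), so $\mathcal{L}_{\mathcal{T}_n}\SSets^{\Thetanop}$ exists by \cite[4.1.1]{hirsch}, with cofibrations the Reedy cofibrations (levelwise monomorphisms), weak equivalences the $\mathcal{T}_n$-local equivalences, and fibrant objects the $\mathcal{T}_n$-local objects, which are exactly the $\Theta_n$-Segal spaces satisfying the $n$ completeness conditions displayed above.

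Next I would reduce the cartesian condition of Definition \ref{cartdef} to a single product-stability property. Since the cofibrations are unchanged by localization and the Reedy structure is cartesian, the pushout-corner map of two cofibrations is still a cofibration, so only the acyclicity clause requires attention. Using cartesian closure together with left properness and the cofibrancy of all objects, a standard two-out-of-three argument on the defining pushout shows that the acyclicity clause follows from the claim that the class of $\mathcal{T}_n$-local equivalences is closed under cartesian product with an arbitrary object: if $A \rightarrow B$ is a $\mathcal{T}_n$-local equivalence, then so is $A \times Z \rightarrow B \times Z$ for every $Z$. Dually, via the adjunction $\Map^h(A \times Z, W) \cong \Map^h(A, W^Z)$ with $W$ fibrant, this product-stability is equivalent to showing that the exponential $W^Z$ of a $\mathcal{T}_n$-local object $W$ is again $\mathcal{T}_n$-local.

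I would then reduce product-stability to the generators. Because the $S$-local equivalences are saturated under the relevant transfinite colimits and the generating cofibrations are products of representables, it suffices to verify that $f \times \Theta[q](c_1, \ldots, c_q)$ is a $\mathcal{T}_n$-local equivalence for each $f \in \mathcal{T}_n$ and each representable $\Theta[q](c_1, \ldots, c_q)$. This is then checked for the three families comprising $\mathcal{T}_n$. For the Segal maps in $\Se_{\Theta_n}$ and the completeness maps in $\Cpt_{\Theta_n}$, one argues by direct combinatorial analysis of how the spine inclusions $\se^{(c_1, \ldots, c_q)}$ and the map $(\tau_\Theta)_\# E^t \rightarrow (\tau_\Theta)_\# \Delta[0]^t$ interact with products of representables in $\Theta_n$. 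For the intertwined family $V[1](\mathcal{T}_{n-1})$, one proceeds by induction on $n$: the base case $n=1$ is precisely the cartesianness of $\css$ (Theorem \cite[7.2]{rezk}, with $\Theta_1 = \Delta$ and $\mathcal{T}_1 = \Se_\Delta \cup \Cpt_\Delta$), and the inductive step uses that the intertwining functor $V[1]$ is compatible with the cartesian structure, so that products with $V[1](\mathcal{T}_{n-1})$-maps are controlled by the already-established cartesianness of $\Theta_{n-1}\css$.

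The hard part will be the last step: verifying that products with the intertwined maps $V[1](\mathcal{T}_{n-1})$ remain $\mathcal{T}_n$-local equivalences. This requires analyzing the explicit formula for $V[1]$ under cartesian product and organizing the induction so that the compatibility of $V[1]$ with products transfers cartesianness from level $n-1$ to level $n$. The Segal and completeness generators, while more elementary, also demand care, since products of representables in $\Theta_n$ do not decompose as transparently as in the simplicial case, so the gluing of spines and the behavior of the underlying simplicial space functor $\tau_\Theta^*$ must be tracked explicitly.
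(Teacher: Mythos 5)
This theorem is not proved in the paper at all: it is quoted directly from Rezk \cite[8.1]{rezktheta}, so the only meaningful comparison is with Rezk's own argument. Your outline does reproduce the skeleton of that argument: existence of the localization, reduction of cartesianness to stability of local equivalences under product with representables (equivalently, that exponentials $W^Z$ of local objects $W$ are again local), verification on the three families of generators, and induction on $n$ to handle $V[1](\mathcal T_{n-1})$, with the base case $n=1$ being the complete Segal space model.

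As a proof, however, your proposal has a genuine gap, and it sits exactly where you say ``the hard part will be.'' The two deferred verifications are not routine; they constitute essentially all of the mathematical content (several sections of Rezk's paper). Worse, the claim you lean on for the inductive step --- that the intertwining functor $V[1]$ is ``compatible with the cartesian structure'' --- is false in the naive form stated: $V[1]$ does not preserve products. From the formula $V[1](A)[q](c_1, \ldots, c_q) = \coprod_{\delta \colon [q] \to [1]} \prod_i A(c_i)$ one sees that $V[1](A) \times V[1](B)$ has components indexed by \emph{pairs} of maps $[q] \to [1]$, while $V[1](A \times B)$ has components indexed by a single such map, so the two differ. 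The actual argument must decompose products of the form $V[1](f) \times \Theta[q](c_1, \ldots, c_q)$ into manageable pieces rather than invoke any product-preservation of $V[1]$, and the completeness generators require a separate delicate analysis --- indeed, the originally published proof of precisely this cartesianness verification contained an error that Rezk had to repair in a published correction, which is a good indication that these steps cannot be waved through. So your plan is the right plan (it is Rezk's), but as written it establishes only the formal reductions; the substance is deferred to claims that are either unproved or, in the case of the $V[1]$ compatibility, not true as stated.
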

	
	We would like to have a good description of the fibrant objects in this model structure.  To this end, we first define mapping objects.
	
	\begin{definition} \label{thetamapping}
		Given a functor $X \colon \Thetanop \rightarrow \SSets$ and any $(x_0, x_1) \in X[0]_0 \times X[0]_0$, we define the \emph{mapping object} $M_X^\Theta(x_0, x_1) \colon \Theta_{n-1}^{\op} \rightarrow \SSets$, evaluated at any object $c$ of $\Theta_{n-1}$, as the pullback of the diagram
		\[ \{(x_0, x_1)\} \rightarrow X[0]\times X[0] \leftarrow X[1](c). \]
	\end{definition}
	
	Revisiting the adjunction \eqref{underlying}, the functor
	\[ \tau_\Theta^* \colon \SSets^{\Thetanop} \rightarrow \SSets^{\Deltaop} \]
	is given by $(\tau_\Theta^*X)_m = \Theta[n]([0], \ldots, [0])$, where here $[0]$ is the terminal object of $\Theta_{n-1}$.
	
	We have the following explicit description of the fibrant objects of the model structure $\Theta_n\css$ that we use, for example, in \cite{inftyn2}.
	
	\begin{definition} \label{thetanspace}
		A \emph{$\Theta_n$-space} is a functor $X \colon \Thetanop \rightarrow \SSets$ such that:
		\begin{enumerate}
			\item $X$ is Reedy fibrant;
			
			\item for every $m \geq 2$ and $c_1, \ldots, c_m \in \ob(\Theta_{n-1})$ the Segal map
			\[ X[m](c_1, \ldots, c_m) \rightarrow X[1](c_1) \times_{X[0]} \cdots \times_{X[0]} X[1](c_m) \]
			is a weak equivalence of simplicial sets;
			
			\item the underlying simplicial space $\tau_\Theta^* X$ is a complete Segal space; and
			
			\item for every $(x_0, x_1) \in X[0]_0 \times X[0]_0$, the mapping object $M_X^\Theta(x_0, x_1)$ is a $\Theta_{n-1}$-space.
		\end{enumerate}
	\end{definition}
	
	\begin{remark}
		We have chosen to follow Rezk's original terminology and refer to these objects as ``$\Theta_n$-spaces".  It is arguably more accurate to call them ``complete Segal $\Theta_n$-spaces", a convention we adopt in \cite{survey}. For the purposes of this paper, however, this specification leads to unwieldy terminology; to avoid having to refer repeatedly to atrocities such as `` complete Segal objects in complete Segal $\Theta_n$-spaces" in what follows, we have chosen to revert back to the more concise name.  We retain the specification of ``$\Theta_n$-Segal space" when we refer to an object that satisfies the Segal condition but no completeness assumptions.
	\end{remark}
	
	We conclude this section by recalling two different ways to think of a homotopy category of a $\Theta_n$-Segal space.
	
	\begin{definition}
		Let $X$ be a $\Theta_n$-Segal space.  Then its \emph{enriched homotopy category} $\uHo(X)$ has object set $X[0]$ and mapping object
		\[\uMap_{\uHo(X)}(x_0, x_1) = M_X^\Theta(x_0, x_1) \colon \Theta_{n-1}^{\op} \rightarrow \SSets. \]  
		Its (ordinary) \emph{homotopy category} $\Ho^\Theta(X)$ has the same objects but has 
		\[ \Hom_{\Ho^\Theta(X)}(x_0, x_1) = \Ho_{\tau_\Theta^*X}(x_0, x_1). \]
	\end{definition}
	
	Alternatively described, $\Ho^\Theta(X)$ is the homotopy category of the underlying Segal space of $X$, in the sense described in Section \ref{cssandsc}.
	
	We would like to use these definitions to generalize Definition \ref{dk} to a notion of Dwyer-Kan equivalence for $\Theta_n$-spaces, and then prove the analogue of Theorem \ref{dkincss} in this context.  Because we have need of them in the proof, we first take a detour to review complete Segal objects in $\Theta_n$-spaces and the notion of Dwyer-Kan equivalence in that context.
	
	\section{Segal and complete Segal objects in $\Theta_n$-spaces}
	
	One feature of $\Theta_n$-spaces is that they are suitably equivalent to categories enriched in $\Theta_{n-1}$-spaces, following the general principle that $(\infty,n)$-categories should be equivalent to categories enriched in $(\infty,n-1)$-categories.  One way to model categories weakly enriched in $\Theta_{n-1}$-spaces is via the structure of a complete Segal object in $\Theta_{n-1}$-spaces.  We give a brief review here, and refer the reader to \cite{inftyn2} for more details.  
	
	The main idea is that, just as a complete Segal space can be thought of as a category weakly enriched in spaces and is given by a functor $W \colon \Deltaop \rightarrow \SSets$, we can describe a complete Segal object in $\Theta_n$-spaces as a functor $W \colon \Deltaop \rightarrow \Theta_n\css$. We emphasize the model structure $\Theta_n\css$ here because it determines the weak equivalences we use for our Segal conditions, but the objects of the underlying category are functors $W \colon \Deltaop \rightarrow \SSets^{\Thetanop}$.
	
	As with complete Segal spaces and $\Theta_n$-spaces, it is helpful to look first at objects that satisfy only the relevant Segal condition.  Our first approach uses a straightforward generalization of the definition of Segal space.
	
	\begin{definition}
		A \emph{Segal object in $\Theta_n$-spaces} is a Reedy fibrant functor $W \colon \Deltaop \rightarrow \SSets^{\Thetanop}$ such that, for every $m \geq 2$, the Segal map
		\[ W_m \rightarrow \underbrace{W_1 \times_{W_0} \cdots \times_{W_0} W_1}_m \]
		is a weak equivalence in the model structure $\Theta_n\css$.  
	\end{definition}
	
	It can be helpful here to think of such functors as $W \colon \Deltaop \rightarrow \Theta_n\css$ to emphasize the model structure on the target category.  We often refer to Segal objects in $\Theta_n$-spaces simply as \emph{Segal objects} for simplicity.
	
	However, there is an equivalent definition that is more widely used in the literature, and that enables a cleaner description of the completeness condition.  Here it is helpful to regard functors $W \colon \Deltaop \rightarrow \SSets^{\Thetanop}$ instead as functors $W \colon \Deltaop \times \Thetanop \rightarrow \SSets$.
	
	For our alternate definition, which is closely related to Definition \ref{thetanspace}, we need a notion of mapping object that is analogous to the one given in Definition \ref{thetamapping} for $\Theta_n$-spaces.
	
	\begin{definition}
		Given a functor $W \colon \Deltaop \times \Thetanop \rightarrow \SSets$ and any $x_0, x_1 \in W([0], [0])_0$, the \emph{mapping object} $M^\Delta_W(x_0, x_1) \colon \Thetanop \rightarrow \SSets$ is defined levelwise by pullbacks
		\begin{equation} \label{mappingobj}
			\xymatrix{M^\Delta_W(x_0, x_1)(c) \ar[r] \ar[d] & W([1],c) \ar[d] \\
				\{(x_0, x_1)\} \ar[r] & W([0], c) \times W([0],c).} 
		\end{equation}
	\end{definition}
	
	The following result is known to experts, but we are not aware of a proof in the literature, so we include one here.  It is of interest in part due to the subtle role of the two different Reedy structures involved.  Recall that a $\Theta_n$-space $X$ is \emph{essentially constant} if for any object $[m](c_1, \ldots, c_m)$ of $\Theta_n$ the unique map from it to $[0]$ induces a weak equivalence $X[0] \rightarrow X[m](c_1, \ldots, c_m)$.
	
	\begin{prop} \label{altsegalobj}
		A Reedy fibrant functor $W \colon \Deltaop \times \Thetanop \rightarrow \SSets$ is a Segal object in $\Theta_n$-spaces with $W_0$ essentially constant if and only if the following conditions hold:
		\begin{enumerate}
			\item \label{segal1}
			for any $m \geq 2$ and $c \in \ob(\Theta_n)$, the Segal map
			\[ W([m], c) \rightarrow W([1],c) \times_{W([0], c)} \cdots \times_{W([0],c)} W([1],c) \]
			is a weak equivalence of simplicial sets; and
			
			\item \label{segal2}
			for any $x_0, x_1 \in W([0],[0])_0$, the mapping object $M^\Delta_W(x_0, x_1)$ is a $\Theta_n$-space.
		\end{enumerate}
	\end{prop}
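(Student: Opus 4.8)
The plan is to compare the two formulations by relating the plain Reedy structure on functors $\Deltaop \times \Thetanop \to \SSets$ to the Reedy structure on simplicial objects valued in $\Theta_n\css$, throughout using that the $\Theta_n$-spaces are exactly the fibrant objects of $\Theta_n\css$. First I would record the Fubini-type fact that $W \colon \Deltaop \times \Thetanop \to \SSets$ is Reedy fibrant if and only if the associated functor $\Deltaop \to \SSets^{\Thetanop}$ is Reedy fibrant for the Reedy structure on the target (and symmetrically in the other variable). Viewing $W$ as such a simplicial object, the external Segal map is a map $\phi_m \colon W_m \to W_1 \times_{W_0} \cdots \times_{W_0} W_1$ in $\SSets^{\Thetanop}$ whose value at each $c \in \ob(\Theta_n)$ is precisely the map of simplicial sets in condition~\ref{segal1}, since limits in $\SSets^{\Thetanop}$ are computed levelwise.

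For the forward implication, assume $W$ is a Segal object, so that each $W_m$ is a $\Theta_n$-space and the matching maps are fibrations in $\Theta_n\css$. Each face map $d_i \colon W_1 \to W_0$ is then a fibration in $\Theta_n\css$, being the composite of the matching map $W_1 \to W_0 \times W_0$ with a projection, so $W_1 \times_{W_0} \cdots \times_{W_0} W_1$ is a homotopy limit of $\Theta_n$-spaces and is itself a $\Theta_n$-space. Thus $\phi_m$ is a weak equivalence in $\Theta_n\css$ between fibrant objects, hence a levelwise weak equivalence of simplicial sets, which is condition~\ref{segal1}. For condition~\ref{segal2}, the mapping object $M^\Delta_W(x_0, x_1)$ is the pullback of $W_1 \to W_0 \times W_0$ along $\{(x_0, x_1)\} \to W_0 \times W_0$; as this is a homotopy pullback of $\Theta_n$-spaces along a $\Theta_n\css$-fibration, it is again a $\Theta_n$-space.

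The reverse implication is where the work lies. Condition~\ref{segal1} says precisely that $\phi_m$ is a levelwise weak equivalence, hence a weak equivalence in $\Theta_n\css$, so the external Segal maps are already of the required kind; what remains is to upgrade the plain Reedy fibrancy of $W$ to Reedy fibrancy over $\Theta_n\css$, that is, to show that every $W_m$ is a $\Theta_n$-space and that the matching maps are fibrations in $\Theta_n\css$. The extra degeneracy $s_0 \colon W_0 \to W_1$ together with a face map exhibits $W_0$ as a retract of $W_1$, so closure of $\Theta_n$-spaces under retracts reduces the object-level claim to showing that $W_1$ is a $\Theta_n$-space; granting this, condition~\ref{segal1} then presents each $W_m$ as a homotopy limit of $\Theta_n$-spaces, and since fibrations between fibrant objects of $\Theta_n\css$ coincide with Reedy fibrations, the matching maps are automatically $\Theta_n\css$-fibrations.

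The main obstacle, and the source of the subtlety between the two Reedy structures noted before the statement, is the remaining step: deducing that $W_1$ (equivalently $W_0$) is a $\Theta_n$-space from condition~\ref{segal2}, which a priori controls only the fibers of the Reedy fibration $W_1 \to W_0 \times W_0$. I would verify each of the four defining conditions of a $\Theta_n$-space (Definition~\ref{thetanspace}) for $W_1$ by propagating them along this fibration: the statement on fibers is exactly condition~\ref{segal2}, while the corresponding statement on the base $W_0 \times W_0$ is recovered as a retract, via $s_0$, of the statement on the total object $W_1$. The delicate point is that this makes the base and total conditions interdependent; I expect to break the apparent circularity by checking the Segal and completeness maps of $W_1$ fiberwise and using left properness together with the cartesian structure of $\Theta_n\css$ to ensure the relevant squares are homotopy pullbacks, so that the fiberwise equivalences of condition~\ref{segal2} and the retract argument apply simultaneously to $W_0$ and $W_1$.
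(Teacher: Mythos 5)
Your forward implication is correct and coincides with the paper's argument: Reedy fibrancy over $\Theta_n\css$ makes each $W_m$ a $\Theta_n$-space, weak equivalences of a localization between fibrant objects are weak equivalences of the original (here, levelwise) structure, and $M^\Delta_W(x_0,x_1)$ is a pullback of a $\Theta_n\css$-fibration between fibrant objects along a map from a fibrant object. Your added check that the iterated fiber product $W_1 \times_{W_0} \cdots \times_{W_0} W_1$ is itself fibrant is a detail the paper leaves implicit.

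The converse, however, contains a genuine gap, located exactly at the step you yourself flag as the main obstacle, and the mechanism you propose cannot close it. Conditions \eqref{segal1} and \eqref{segal2} see the $\Theta_n$-direction of $W$ only through the fibers of $W_1 \rightarrow W_0 \times W_0$ over vertex pairs, and those fibers can be completely trivial no matter what $W_0$ is. Concretely, let $Z$ be any Reedy fibrant object of $\SSets^{\Thetanop}$ that is \emph{not} a $\Theta_n$-space, and let $W$ be its external $0$-coskeleton, $W([m],c) = Z(c)^{m+1}$, with faces the projections and degeneracies the diagonals. One checks directly that $W$ is Reedy fibrant as a bifunctor, that every Segal map in condition \eqref{segal1} is an isomorphism, and that every mapping object $M^\Delta_W(x_0,x_1)$ is the terminal object of $\SSets^{\Thetanop}$, hence a $\Theta_n$-space; yet $W_0 = Z$ and $W_1 = Z \times Z$ are not $\Theta_n$-spaces. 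So the intermediate statement your plan requires --- that conditions \eqref{segal1}, \eqref{segal2} together with plain Reedy fibrancy force $W_1$ (equivalently $W_0$) to be local --- is not available by any argument through the fibers: locality of the fibers of $W_1 \rightarrow W_0 \times W_0$ places no constraint on the base, and the retract $W_0 \rightarrow W_1 \rightarrow W_0$, left properness, and the cartesian structure cannot manufacture one.

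This also explains why the paper's converse looks so different from your plan: it never attempts to deduce levelwise locality from the mapping objects. Instead it writes $W_m = \uMap(\Delta[m], W)$ and $M_mW = \uMap(\partial\Delta[m], W)$ and asserts, from the inclusion $\partial\Delta[m] \rightarrow \Delta[m]$, that each matching map $W_m \rightarrow M_mW$ is a fibration in $\Theta_n\css$; condition \eqref{segal1} then exhibits the Segal maps as levelwise, hence local, equivalences. Be aware that the entire subtlety you isolated is compressed into that one asserted sentence: for $m=0$ it already demands that $W_0$ be fibrant in $\Theta_n\css$, and condition \eqref{segal2} is never visibly invoked in this direction. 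Your diagnosis of where the difficulty lives is therefore sound, but a correct completion needs an input controlling $W_0$ in the $\Theta_n$-direction beyond what the fibers record --- the coskeleton example above shows that, with ``Reedy fibrant'' in the definition of Segal object read as Reedy fibrancy over $\Theta_n\css$ (the reading both you and the paper's forward direction use), no such input is present in the hypotheses as stated.
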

	
	\begin{proof}
		Suppose that $W$ is a Segal object in $\Theta_n$-spaces, so for each $m \geq 2$ the map
		\[ W_m \rightarrow W_1 \times_{W_0} \cdots \times_{W_0} W_1 \]
		is a weak equivalence in $\Theta_n\css$.  Since $W$ is assumed to be Reedy fibrant, $W_m$ is a $\Theta_n$-space for each $m \geq 0$ \cite[15.3.12]{hirsch}.  Since $\Theta_n\css$ is obtained as a localized model category, and local weak equivalences between fibrant objects are levelwise weak equivalences, each Segal map above is a levelwise weak equivalence of functors $\Thetanop \rightarrow \SSets$, i.e., the maps as in \eqref{segal1} are weak equivalences of simplicial sets.
		
		To check \eqref{segal2}, consider $M_W^\Delta(x,y)$ for fixed $x,y \in W([0],[0])_0$.  Since $W$ is assumed to be Reedy fibrant, the right vertical map in \eqref{mappingobj} is a fibration between $\Theta_n$-spaces, which are the fibrant objects in $\Theta_n\css$.  Since the discrete object $\{(x,y)\}$ is also a fibrant object in $\Theta_n\css$, the pullback must be as well.  It follows that $M^\Delta_W(x,y)$ is fibrant, namely, a $\Theta_n$-space.  
		
		Conversely, suppose $W$ satisfies conditions \eqref{segal1} and \eqref{segal2}.  We first want to show that $W$ is Reedy fibrant as a functor $W \colon \Deltaop \rightarrow \Theta_n\css$.  For any $m \geq 0$, let $M_mW$ denote the $m$-th matching object of $W$; using the definition of Reedy fibration \cite[15.3.3]{hirsch}, we need to show that the map $W_m \rightarrow M_mW$ is a fibration in $\Theta_n\css$.  
		
		Observe that $W_m = \uMap(\Delta[m], W)$, the functor $\Thetanop \rightarrow \SSets$ defined by 
		\[ [p](c_1, \ldots, c_p) \mapsto W([m], [p](c_1, \ldots, c_p)). \]  
		Similarly, $M_mW = \uMap(\partial \Delta[m], W)$.  Using the inclusion $\partial \Delta[m] \rightarrow \Delta[m]$, one can check that the map $W_m \rightarrow M_mW$ is indeed a Reedy fibration.  It remains to show it is a fibration in $\Theta_n\css$, for which it suffices by \cite[15.3.13]{hirsch} to show that $W_n$ is fibrant, i.e, a $\Theta_n$-space. 
		
		We apply the right adjoint $R$ to the inclusion functor of Segal precategory objects, or functors $\Deltaop \rightarrow \Thetansp$ with discrete space in degree zero, into all simplicial objects in $\Thetansp$, where $RW$ is the pullback
		\[ \xymatrix{RW \ar[r] \ar[d] & \cosk_0(W([0],[0])) \ar[d] \\
			W \ar[r] & \cosk_0(W_0).} \]
		The essential constancy of $W_0$ implies that $\cosk_0(W([0],[0])$ is levelwise weakly equivalent to $\cosk_0(W_0)$, and hence $RW \rightarrow W$ is also a levelwise weak equivalence.  But 
		\[ (RW)_1 = \coprod_{(x,y)}\map_W(x,y), \]
		which is a $\Theta_n$-space by assumption, thus $W_1$ must be as well; a similar argument can be used for $n \geq 1$.
		
		Finally, we need to check that for any $m \geq 2$ the Segal map
		\[ W_m \rightarrow W_1 \times_{W_0} \cdots \times_{W_0} W_1 \]
		is a weak equivalence in $\Theta_n\css$.  We know by assumption that for any $m \geq 2$ and any object $c$ of $\Thetanop$, the map
		\[ W([m], c) \rightarrow W([1],c) \times_{W([0],c)} \cdots \times_{(W([0],c)} W([1],c) \]
		is a weak equivalence of simplicial sets.  It follows that the Segal map above is a levelwise weak equivalence of simplicial sets, hence also a weak equivalence in $\Theta_n\css$.
	\end{proof}
	
	Observe that the proof of the forward direction did not use the essential constancy condition, but it was necessary to prove the converse.
	
	We now incorporate the completeness condition by modifying this second definition of Segal objects.  Analogously to the setting of $\Theta_n$-spaces, we make use of an underlying simplicial space functor that we define as follows.  Let $\tau_\Delta \colon \Delta \rightarrow \Delta \times \Theta_n$ be given by $[k] \mapsto ([k], [0])$.  The desired functor is the induced map
	\[ \tau_\Delta^* \colon \SSets^{\Deltaop \times \Thetanop} \rightarrow \SSets^{\Deltaop}. \] 
	
	\begin{definition} 
		A Reedy fibrant functor $W \colon \Deltaop \times \Thetanop \rightarrow \SSets$ is a \emph{complete Segal object in $\Theta_n$-spaces} if:
		\begin{enumerate}
			\item \label{segal} 
			for all $m \geq 2$ and $c \in \ob(\Theta_n)$, the map
			\[ W([m],c) \rightarrow \underbrace{W([1],c)\times_{W([0],c)} \cdots \times_{W([0],c)}  W([1],c)}_m \]
			is a weak equivalence of spaces;
			
			\item \label{localmapping} 
			for all $x_0,x_1\in W([0],[0])$, the functor $M_W^\Delta(x_0,x_1) \colon \Thetanop \rightarrow \SSets$ is a $\Theta_n$-space;
			
			\item \label{complete0} 
			the underlying simplicial space $\tau_\Delta^* W$ is a complete Segal space; and
			
			\item \label{essconst} 
			for all objects $c \in \Theta_n$, the map $W([0],[0]) \rightarrow W([0],c)$ is a weak equivalence.
		\end{enumerate}
	\end{definition}
	
	Again, we typically refer to these objects simply as \emph{complete Segal objects}.  
	
	\begin{theorem} \cite[5.9]{inftyn2}
		There is a model structure $\css(\Thetansp)$ on the category of functors $\Deltaop \times \Thetanop \rightarrow \SSets$ in which the fibrant objects are the complete Segal objects, obtained by a localization of the Reedy model structure.
	\end{theorem}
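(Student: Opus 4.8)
The plan is to construct $\css(\Thetansp)$ as a left Bousfield localization of the Reedy model structure on $\SSets^{\Deltaop \times \Thetanop}$, mirroring the way $\css$ and $\Theta_n\css$ are built from their Reedy structures. First I would record the input. Since $\Delta \times \Theta_n$ is a product of elegant Reedy categories, it is again an elegant Reedy category, so the Reedy model structure on $\SSets^{\Deltaop \times \Thetanop}$ agrees with the injective one; in particular every object is cofibrant, and the structure inherits left properness and cofibrant generation from $\SSets$. These are exactly the hypotheses needed for \cite[4.1.1]{hirsch}, so that for any set $S$ of maps the localization $\mathcal{L}_S\,\SSets^{\Deltaop \times \Thetanop}$ exists, with cofibrations the Reedy cofibrations, weak equivalences the $S$-local equivalences, and fibrant objects the Reedy-fibrant $S$-local objects.

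Second, I would assemble $S$ as a union of four families, one for each condition in the definition of a complete Segal object, using external products $\boxtimes$ of a generator in the $\Delta$-direction with a representable $\Theta[c]$ in the $\Theta_n$-direction. For condition \eqref{segal} take the $\Delta$-direction Segal maps $G[m]^t \boxtimes \Theta[c] \to \Delta[m]^t \boxtimes \Theta[c]$ for $m \geq 2$ and $c \in \ob(\Theta_n)$. For condition \eqref{localmapping} introduce an intertwining functor $V[1] \colon \SSets^{\Thetanop} \to \SSets^{\Deltaop \times \Thetanop}$ that enriches in the $\Delta$-direction, $V[1](A)([q],c) = \coprod_{\delta \colon [q] \to [1]} \prod_{i=1}^{q} A(c)$, and localize with respect to the image $V[1](\mathcal{T}_n)$ of the generating set of $\Theta_n\css$. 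For condition \eqref{complete0}, localize with respect to $(\tau_\Delta)_\#(\Cpt_\Delta)$; by the adjunction $((\tau_\Delta)_\#, \tau_\Delta^*)$, an object $W$ is local with respect to these maps precisely when its underlying simplicial space $\tau_\Delta^* W$ is complete. For condition \eqref{essconst}, localize with respect to the maps $\Delta[0]^t \boxtimes \Theta[c] \to \Delta[0]^t \boxtimes \Theta[0]$ induced by the terminal morphisms $c \to [0]$.

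With $S$ in hand and $\css(\Thetansp) := \mathcal{L}_S\,\SSets^{\Deltaop \times \Thetanop}$, the remaining task is to identify the fibrant objects with the complete Segal objects. A Reedy-fibrant $W$ is $S$-local if and only if it is local against each family separately, so I would verify that each family computes its intended condition. Using $\Map^h(\Delta[m]^t \boxtimes \Theta[c], W) \simeq W([m],c)$ together with the fact that local equivalences between fibrant objects are Reedy weak equivalences, the first family yields condition \eqref{segal}; the adjunction for $(\tau_\Delta)_\#$ yields condition \eqref{complete0}; and the terminal-map computation yields condition \eqref{essconst}.

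The hard part will be the second family and condition \eqref{localmapping}. Here one must prove that $W$ is local with respect to $V[1](\mathcal{T}_n)$ if and only if every mapping object $M_W^\Delta(x_0,x_1)$ is a $\Theta_n$-space. This rests on an exponential computation showing that, over a fixed pair $(x_0,x_1)$ of $0$-cells, the homotopy mapping space $\Map^h(V[1](A), W)$ is identified with $\Map^h(A, M_W^\Delta(x_0,x_1))$ in $\SSets^{\Thetanop}$ --- that is, $V[1]$ is fiberwise adjoint to the mapping-object construction, exactly as Rezk establishes for the $V[1]$ appearing in the definition of $\mathcal{T}_n$. The delicacy, as in the proof of Proposition \ref{altsegalobj}, is the interplay of the two Reedy structures: one must check that the pullback defining $M_W^\Delta(x_0,x_1)$ is a homotopy pullback in $\Theta_n\css$, using Reedy fibrancy of $W$ in the product structure to guarantee that the relevant map is a fibration, and then confirm that the Segal condition \eqref{segal} together with locality against $V[1](\mathcal{T}_n)$ is equivalent to the levelwise assertion that each $W_m$ is a $\Theta_n$-space --- which is precisely the two-way equivalence recorded in Proposition \ref{altsegalobj}. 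Once these identifications are established, the four families match the four defining conditions, and the fibrant objects of $\css(\Thetansp)$ are exactly the complete Segal objects.
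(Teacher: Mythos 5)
The paper itself offers no proof of this statement---it is quoted from \cite[5.9]{inftyn2}, and the theorem's own wording already tells you the route (``obtained by a localization of the Reedy model structure'')---so your proposal can only be measured against that cited construction and the toolkit the paper sets up around it. Your overall strategy is the right one and matches it: use elegance of $\Delta\times\Theta_n$ to identify Reedy and injective structures, invoke the existence of left Bousfield localizations, and choose a set $S$ whose locality conditions, for Reedy fibrant $W$, unpack to the four conditions defining a complete Segal object. Your families for conditions \eqref{segal}, \eqref{complete0}, and \eqref{essconst} are correct, and the locality computations you indicate for them (representability, the adjunction $((\tau_\Delta)_\#,\tau_\Delta^*)$, and evaluation at terminal morphisms) go through. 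Note also that since a left Bousfield localization with fixed cofibrations is determined by its fibrant objects, any correct choice of $S$ proves the theorem; you need not match the cited set on the nose.

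The genuine gap is your second family. First, the formula you give, $V[1](A)([q],c)=\coprod_{\delta\colon[q]\to[1]}\prod_{i=1}^{q}A(c)$, does not define a functor on $(\Delta\times\Theta_n)^{\op}$: under the degeneracy $([1],c)\to([0],c)$ the point indexed by a constant $\delta$ would have to map into a copy of $A(c)$, which requires a basepoint. (The paper's display recalling Rezk's $V[1]$ commits the same abuse; in Rezk's actual definition the product runs only over those $i$ with $\delta(i-1)=0$ and $\delta(i)=1$, so constant $\delta$'s contribute points.) Since you are building this functor from scratch rather than citing it, you need the corrected formula. Second, and more seriously, the fiberwise identification of $\Map^h(V[1](A),W)$ with the spaces $\Map^h(A,M_W^\Delta(x_0,x_1))$ is the entire technical content of the theorem, and it cannot be quoted from Rezk: his computation takes place in $\SSets^{(\Theta\mathcal{C})^{\op}}$, where $V[1](A)$ has a specific decomposition into representables of $\Theta\mathcal C$, whereas your $V[1]$ lands in $\SSets^{\Deltaop\times\Thetanop}$; the analogous decomposition and the role of the outer Segal condition must be redone there. ``Exactly as Rezk establishes'' is therefore carrying all the unproved weight. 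There is a cheaper encoding of condition \eqref{localmapping} that avoids this entirely: localize at $\{\Delta[m]^t\boxtimes g \mid m\geq 0,\ g\in\mathcal T_n\}$. The exponential law $\Hom(\Delta[m]^t\boxtimes A,W)\cong\Hom_{\SSets^{\Thetanop}}(A,W([m],-))$ is immediate, so locality says precisely that every level $W_m$ is a $\Theta_n$-space. That each $M_W^\Delta(x_0,x_1)$ is then a $\Theta_n$-space follows because it is a pullback of the Reedy fibration $W_1\to W_0\times W_0$---a fibration in $\Theta_n\css$ between fibrant objects---along a point; conversely, conditions \eqref{segal} and \eqref{localmapping} imply every level is a $\Theta_n$-space by Proposition \ref{altsegalobj}. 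This replaces your hard lemma with results already recorded in the paper and identifies the fibrant objects just as well.
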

	
	Using the mapping objects $M^\Delta_W(x,y)$, we can generalize Definition \ref{dk} as follows.  Here, we define the \emph{homotopy category} of a Segal object $W$ to be the homotopy category of the underlying Segal space $\tau_\Delta^*W$, and denote it by $\Ho^\Delta(W)$.
	
	\begin{definition}
		A map $f \colon W \rightarrow Z$ of Segal objects in $\Thetansp$ is a \emph{Dwyer-Kan equivalence} if:
		\begin{itemize}
			\item for any objects $x$ and $y$ of $W$, the induced map $M_W^\Delta(x,y) \rightarrow M_Z^\Delta(fx,fy)$ is a weak equivalence in $\Thetansp$, and
			
			\item the induced functor $\Ho^\Delta(W) \rightarrow \Ho^\Delta(Z)$ is essentially surjective.
		\end{itemize}
	\end{definition}
	
	The following theorem is a generalization of Theorem \ref{dkincss}\eqref{dkbetweensegalincss}.
	
	\begin{theorem}  \cite[8.18]{inftyn2}
		A map $f \colon U \rightarrow V$ of Segal objects is a Dwyer-Kan equivalence if and only if it is a weak equivalence in the model category $\css(\Thetansp)$.
	\end{theorem}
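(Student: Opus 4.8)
The plan is to mirror, one categorical level up, Rezk's proof of Theorem \ref{dkincss}\eqref{dkbetweensegalincss}, whose essential ingredients are a completion functor, a two-out-of-three property for Dwyer--Kan equivalences, and the complete-object statement Theorem \ref{dkincss}\eqref{dkbetweencompleteincss}. Accordingly, I would first isolate three facts and then assemble the theorem formally. The three facts are: (a) Dwyer--Kan equivalences of Segal objects satisfy two-out-of-three; (b) there is a completion functor $W \mapsto \widehat{W}$ from Segal objects to complete Segal objects, equipped with a natural map $W \to \widehat{W}$ that is simultaneously a Dwyer--Kan equivalence and a weak equivalence in $\css(\Thetansp)$; and (c) a Dwyer--Kan equivalence between complete Segal objects is exactly a levelwise weak equivalence, i.e.\ a weak equivalence in $\css(\Thetansp)$ between fibrant objects. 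Fact (a) is routine: the mapping-object clause is two-out-of-three for weak equivalences in $\Theta_n\css$, and the essential-surjectivity clause combines with it in the standard way, exactly as for Segal spaces.

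For fact (b), the key observation is that for each object $c$ of $\Theta_n$ the simplicial space $W(-,c)$ is a Segal space: its Segal maps are precisely condition \eqref{segal1} of Proposition \ref{altsegalobj}. I would therefore define $\widehat{W}$ by regarding $W$ as a functor $\Thetanop \rightarrow \SSets^{\Deltaop}$ and postcomposing objectwise with Rezk's completion functor \cite{rezk}, which by functoriality again yields a functor $\Deltaop \times \Thetanop \rightarrow \SSets$. Because $M^\Delta_W(x,y)(c)$ is, by definition, a mapping space of the Segal space $W(-,c)$, Rezk's theorem that completion is a Dwyer--Kan equivalence gives levelwise-in-$c$ weak equivalences $M^\Delta_W(x,y) \rightarrow M^\Delta_{\widehat{W}}(\widehat{x},\widehat{y})$; since both sides are $\Theta_n$-spaces this is a weak equivalence in $\Theta_n\css$. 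At $c=[0]$ the same input shows $\tau_\Delta^* W \rightarrow \tau_\Delta^* \widehat{W}$ is a Dwyer--Kan equivalence of complete Segal spaces, so $\Ho^\Delta(W) \rightarrow \Ho^\Delta(\widehat{W})$ is essentially surjective. Thus $W \rightarrow \widehat{W}$ is a Dwyer--Kan equivalence, and one checks separately that $\widehat{W}$ satisfies the Segal and mapping-object conditions and that its underlying simplicial space is complete.

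For fact (c), let $f \colon W \rightarrow Z$ be a Dwyer--Kan equivalence of complete Segal objects. Using the Segal condition it suffices to show $W([m],c) \rightarrow Z([m],c)$ is a weak equivalence for $m=0,1$. The mapping-object clause, being a weak equivalence in $\Theta_n\css$ between $\Theta_n$-spaces, is a levelwise equivalence, so it restricts at $c=[0]$ to a mapping-space equivalence of the underlying complete Segal spaces; together with essential surjectivity this makes $\tau_\Delta^* f$ a Dwyer--Kan equivalence of complete Segal spaces, hence a levelwise equivalence by Theorem \ref{dkincss}\eqref{dkbetweencompleteincss}, settling $m=0$ at $c=[0]$. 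The essential-constancy condition \eqref{essconst} then upgrades this to all $c$ for $m=0$, and the fibration sequence defining $M^\Delta_W(x,y)(c)$ over $W([0],c) \times W([0],c)$ handles $m=1$.

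With (a)--(c) in hand the theorem follows by the usual two-step argument: given a Dwyer--Kan equivalence $f \colon U \rightarrow V$, complete both sides, use (a) to see $\widehat{f}$ is a Dwyer--Kan equivalence of complete Segal objects, hence a $\css(\Thetansp)$-weak-equivalence by (c), and then two-out-of-three in $\css(\Thetansp)$ together with (b) gives that $f$ is a weak equivalence; the converse runs symmetrically, using that a levelwise equivalence of complete Segal objects is visibly a Dwyer--Kan equivalence. I expect the genuine difficulty to lie entirely in fact (b), and within it in verifying the essential-constancy condition \eqref{essconst} for $\widehat{W}$ and that $W \rightarrow \widehat{W}$ is a local equivalence in $\css(\Thetansp)$ rather than merely in the outer $\Delta$-direction. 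Both hinge on showing that the subspace of homotopy equivalences of $W(-,c)$ is insensitive to $c$ --- the precise manifestation, in the completion, of the ``discreteness from the bottom up'' phenomenon --- which I would establish by analyzing how the terminal map $c \rightarrow [0]$ acts on objects and invertible $1$-morphisms.
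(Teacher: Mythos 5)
A preliminary remark: this statement is imported from \cite[8.18]{inftyn2} --- the present paper cites it rather than proving it --- so your proposal has to be judged on its own internal coherence, and there it has a genuine gap. The gap is exactly where you predicted the difficulty would lie, but it is worse than a difficulty: your fact (b) is false as constructed, because the key lemma you propose --- that the space of homotopy equivalences of $W(-,c)$ is insensitive to $c$ --- fails for general Segal objects. Take $n=1$, let $Z$ be the nerve of the category $(0 \rightarrow 1)$, regarded as a levelwise discrete simplicial space (a complete Segal space, since this category has no non-identity isomorphisms), and set $W([m],[k]) = Z_k$, constant in the $\Delta$-direction. Everything is levelwise discrete, so $W$ is Reedy fibrant; the Segal maps of condition \eqref{segal1} of Proposition \ref{altsegalobj} are identities; and the mapping objects $M^\Delta_W(x,y)$ are the constant diagram on a point (when $x=y$) or the empty diagram (when $x \neq y$), both complete Segal spaces. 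So $W$ is a Segal object. But each $W(-,[k])$ is the nerve of the discrete category on the set $Z_k$, in which every morphism is an identity: it is already complete, and its space of homotopy equivalences is all of $Z_k$, which has two points for $k=0$ and three points for $k=1$. Consequently your objectwise completion satisfies $\widehat{W} \simeq W$, and condition \eqref{essconst} fails for it, since $\widehat{W}([0],[0]) \rightarrow \widehat{W}([0],[1])$ is the inclusion of two points into three. The same construction applied to any non-essentially-constant $\Theta_n$-space produces counterexamples for every $n$.

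The conceptual point is that completeness for Segal objects is not an objectwise-in-$c$ condition: condition \eqref{essconst} couples the different $c$, whereas the definition of a Segal object places no constraint on $W([0],c)$ away from the basepoints coming from $W([0],[0])_0$. In the example above, the inclusion of the two constant functors exhibits the discrete complete Segal object on two objects as a Dwyer--Kan-equivalent fibrant model for $W$, so the genuine fibrant replacement in $\css(\Thetansp)$ collapses $W([0],[1])$ from three points to two; it is not an objectwise weak equivalence in the $c$-direction, and no construction performed one $c$ at a time can produce it. Your facts (a) and (c) are fine --- (c) in particular mirrors the argument this paper gives in Section 6 for $\Theta_n$-spaces --- and your two-out-of-three assembly would indeed prove the theorem if a completion functor as in (b) existed. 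But producing such a functor is precisely the obstruction that this paper flags, in the discussion preceding Theorem \ref{dkwkequiv}, as the reason a Rezk-style completion argument ``seems prohibitively difficult'' at this level of generality, and it is why both this paper (for its own Dwyer--Kan results) and \cite{inftyn2} proceed by other routes rather than by completing objectwise.
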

	
	\section{Dwyer-Kan equivalences for $\Theta_n$-spaces}
	
	Of key importance in the theory of complete Segal spaces and their relationship with Segal categories are the Dwyer-Kan equivalences, which mirror the natural weak equivalences of simplicial categories that share the same name.  The main idea is to generalize the notion of equivalence of categories, namely being fully faithful and essentially surjective, to a more general context.
	
	We would like to have a similar notion for $\Theta_n$-spaces.  While the appropriate definition was given in by Rezk in \cite{rezktheta}, trying to establish a result analogous to Theorem \ref{dkincss} presented some technical difficulties.  In this section, we establish these properties, avoiding some of the technical obstacles by using the Quillen equivalence between $\Theta_n$-spaces and complete Segal objects in $\Theta_{n-1}$-spaces from \cite[7.2]{inftyn2}.
	
	The following definitions for $\Theta_n$-Segal spaces are given in \cite{rezktheta}.  We start with homotopically fully faithful maps, which make use of the mapping objects described in the previous section.
	
	\begin{definition}
		Let $X$ and $Y$ be $\Theta_n$-Segal spaces. A morphism $f \colon X \rightarrow Y$ is \emph{homotopically fully faithful} if for every $x_0, x_1 \in X[0]$ and every $c \in \ob(\Theta_{n-1})$ the map
		\[ M_X^\Theta(x_0, x_1)(c) \rightarrow M_X^\Theta(fx_0, fx_1)(c) \]
		is a weak equivalence in $\SSets$.
	\end{definition}
	
	We define essential surjectivity in terms of the homotopy category.
	
	\begin{definition}
		Let $X$ and $Y$ be $\Theta_n$-Segal spaces.  A morphism $X \rightarrow Y$ is \emph{essentially surjective} if $\Ho^\Theta(f) \colon \Ho^\Theta(X) \rightarrow \Ho^\Theta(Y)$ is an essentially surjective functor of categories.
	\end{definition}
	
	We want to consider these notions for more general functors $\Thetanop \rightarrow \SSets$, which we can accomplish via a localization functor. Let us denote by $L_{\Se}X$ the functorial localization of $X$ in the model structure $\Theta_n\sesp$.
	
	\begin{definition}
		Suppose $X, Y \colon \Thetanop \rightarrow \SSets$.  A map $X \rightarrow Y$ is a \emph{Dwyer-Kan equivalence} if the associated map $L_{\Se}X \rightarrow L_{\Se}Y$ is homotopically fully faithful and essentially surjective.  
	\end{definition}
	
	The following result is the analogue of Theorem \ref{dkincss}.
	
	\begin{theorem} \label{dkwkequiv}
		Let $X, Y \colon \Thetanop \rightarrow \SSets$ be $\Theta_n$-Segal spaces.  A map $X \rightarrow Y$ is a Dwyer-Kan equivalence if and only if it is a weak equivalence in $\Theta_n\css$.
	\end{theorem}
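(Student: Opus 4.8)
The plan is to transport the statement across the Quillen equivalence $\Theta_n\css \rightleftarrows \css(\Theta_{n-1}Sp)$ of \cite[7.2]{inftyn2}, reducing it to the characterization of Dwyer-Kan equivalences of Segal objects given by \cite[8.18]{inftyn2} (applied with $n$ replaced by $n-1$). First I would note that, since $X$ and $Y$ are already $\Theta_n$-Segal spaces and hence fibrant in $\Theta_n\sesp$, the localization maps $X \to L_{\Se}X$ and $Y \to L_{\Se}Y$ are levelwise weak equivalences; consequently $f$ is a Dwyer-Kan equivalence exactly when $f$ itself is homotopically fully faithful and essentially surjective, so I may work with $X$ and $Y$ directly. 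I would also record that both model structures in the equivalence are localizations of Reedy (equivalently injective) structures in which every object is cofibrant.

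The crucial step is a compatibility lemma. Writing $\tilde X, \tilde Y$ for the Segal objects in $\Theta_{n-1}$-spaces associated to $X, Y$ under the equivalence, I would show that this correspondence identifies the two flavors of mapping object and homotopy category: there are weak equivalences $M_X^\Theta(x_0,x_1) \simeq M_{\tilde X}^\Delta(x_0,x_1)$ of $\Theta_{n-1}$-spaces for each pair of objects, and an equivalence of categories $\Ho^\Theta(X) \cong \Ho^\Delta(\tilde X)$, both natural in $f$. The identification of homotopy categories should be the routine part, since each is computed from the common underlying Segal space; matching the mapping objects requires unwinding the pullback definitions and tracking how the comparison functor treats the level-$[1]$ part of the $\Theta_n$-diagram, which is precisely the enriching direction of the Segal object. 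Granting this, $f$ is homotopically fully faithful (resp.\ essentially surjective) if and only if the transported map $\tilde f \colon \tilde X \to \tilde Y$ is, so $f$ is a Dwyer-Kan equivalence of $\Theta_n$-Segal spaces if and only if $\tilde f$ is a Dwyer-Kan equivalence of Segal objects.

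The theorem then follows by chaining equivalences. By the compatibility lemma, $f$ is a Dwyer-Kan equivalence precisely when $\tilde f$ is a Dwyer-Kan equivalence of Segal objects; by \cite[8.18]{inftyn2}, applied at level $n-1$, this holds precisely when $\tilde f$ is a weak equivalence in $\css(\Theta_{n-1}Sp)$; and this in turn holds precisely when $f$ is a weak equivalence in $\Theta_n\css$. The final equivalence uses that the left adjoint of a Quillen equivalence both preserves and reflects weak equivalences between cofibrant objects---preservation by Ken Brown's lemma and reflection by the defining property of a Quillen equivalence---together with the cofibrancy of all objects in these Reedy structures.

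The hard part will be the compatibility lemma of the second paragraph, and in particular the care needed because completeness is not assumed on $X$ and $Y$: one must work inside the Segal localizations $\Theta_n\sesp$ and the corresponding Segal-object model structure, verify that the transported objects are genuinely Segal objects after a Segal-fibrant replacement (which does not disturb the mapping objects or homotopy category up to weak equivalence), and only then invoke \cite[8.18]{inftyn2}. Matching the mapping objects in the $\Theta$-direction, rather than the homotopy categories, is where the bulk of the bookkeeping lies.
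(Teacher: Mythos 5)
Your proposal is correct, and its skeleton coincides with the paper's: both reduce the statement to \cite[8.18]{inftyn2} by transporting along the Quillen equivalence $d^* \colon \Theta_n\css \rightleftarrows \css(\Theta_{n-1}Sp) \colon d_*$ of \cite[7.2]{inftyn2}, and your ``compatibility lemma'' is exactly the paper's preparatory proposition --- there the mapping-object identification $M^\Theta_X(x_0,x_1)(c) \cong M^\Delta_{d^*X}(x_0,x_1)(c)$ is quoted as a natural isomorphism from \cite[3.10]{inftyn2}, and the fact that $d^*$ carries $\Theta_n$-Segal spaces to Segal objects in $\Theta_{n-1}Sp$ is \cite[6.3]{inftyn2}, so the Segal-fibrant replacement you worry about in your last paragraph is not actually needed. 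Where you genuinely diverge is in the final step, transferring weak equivalences across the equivalence. You invoke the soft facts that a left Quillen functor preserves weak equivalences between cofibrant objects (Ken Brown's lemma) and that the left adjoint of a Quillen equivalence reflects them, together with the cofibrancy of all objects in these localized Reedy structures; this is valid and noticeably shorter. The paper instead argues by hand with mapping spaces: the forward direction uses the adjunction and the fact that $d_*$ takes complete Segal objects to $\Theta_n$-spaces (\cite[6.1]{inftyn2}), while the converse rests on showing that every $\Theta_n$-space $W$ arises as $d_*Z$ for an explicitly constructed complete Segal object $Z$, verified by a right-Kan-extension computation. Your route buys brevity and robustness, relying only on standard model category theory; the paper's route is more constructive and yields the additional fact that every $\Theta_n$-space is in the image of $d_*$ applied to complete Segal objects, at the cost of a computation whose details (``the rest of the structure is thus determined by the Segal and completeness conditions'') are more delicate than your abstract argument.
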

	
	Proving this theorem using the same strategy as the analogous result for complete Segal spaces \cite[7.7]{rezk} seems challenging, although of interest for the constructions that would need to be made along the way.  In that case, Rezk gives an explicit description of a fibrant replacement functor of a Segal space via a Dwyer-Kan equivalence.  The higher categorical version of this construction seems quite difficult to produce, although it is being investigated for $n=2$ by Miika Tuominen.  However, we can prove the above theorem more efficiently, using the fact that the analogous result is true in the context of complete Segal objects in $\Theta_{n-1}\css$. 
	
	Consider the functor $d \colon \Delta \times \Theta_{n-1} \rightarrow \Theta_n$, given by $([m],c) \mapsto [m](c,\ldots, c)$.  It induces the functor 
	\[ \begin{aligned}
		d^* \colon \SSets^{\Thetanop} & \rightarrow \SSets^{\Deltaop \times \Theta_{n-1}^{\op}} \\
		X & \mapsto \left(([m], c) \mapsto X[m](c, \ldots, c) \right) 
	\end{aligned} \]
	which has a right adjoint $d_*$ given by right Kan extension.
	
	\begin{theorem} \cite[7.1]{inftyn2}
		The adjoint pair $(d^*, d_*)$ induces a Quillen equivalence
		\[ d^* \colon \Thetansp \rightleftarrows \css(\Theta_{n-1}\css) \colon d_*. \]
	\end{theorem}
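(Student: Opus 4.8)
The plan is to establish that $(d^*, d_*)$ is a Quillen pair and then to verify the Quillen equivalence through the standard criterion: the right adjoint $d_*$ reflects weak equivalences between fibrant objects, and the derived unit $X \to d_* R(d^* X)$ is a weak equivalence for every object $X$ (all objects being cofibrant), where $R$ denotes fibrant replacement in $\css(\Theta_{n-1}Sp)$. Two strict compatibilities drive everything, both arising from factorizations of structure functors through $d$. Since $d \circ \tau_\Delta = \tau_\Theta$ as functors $\Delta \to \Theta_n$, we have $\tau_\Delta^* \circ d^* = \tau_\Theta^*$, so $d^*$ carries underlying simplicial spaces to underlying simplicial spaces on the nose. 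Since $d([0],c) = [0]$ and $d([1],c) = [1](c)$, comparing the defining pullbacks gives $M_{d^* X}^\Delta(x_0, x_1) = M_X^\Theta(x_0, x_1)$ as functors $\Theta_{n-1}^{\op} \to \SSets$. Thus $d^*$ preserves, strictly, exactly the data that govern Dwyer-Kan equivalences on both sides.

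For the Quillen pair I would first note that on the underlying Reedy structures the adjunction is Quillen: cofibrations are levelwise monomorphisms and weak equivalences are levelwise, and $d^*$, being restriction along $d$, preserves both, so it is left Quillen. To descend to the localized structures $\Thetansp$ and $\css(\Theta_{n-1}Sp)$, since localization leaves cofibrations unchanged and $d^*$ preserves levelwise weak equivalences, it suffices to check that $d^*$ sends each map of $\mathcal T_n = \Se_{\Theta_n} \cup \Cpt_{\Theta_n} \cup V[1](\mathcal T_{n-1})$ to a weak equivalence of $\css(\Theta_{n-1}Sp)$ (cf.\ \cite{hirsch}). The completeness family $\Cpt_{\Theta_n}$ is handled by the first compatibility, because completeness of a complete Segal object is a condition on its underlying simplicial space; the Segal family $\Se_{\Theta_n}$ and the intertwined family $V[1](\mathcal T_{n-1})$ are handled by computing $d^*$ on the relevant representables and colimits and recognizing their images, via the mapping-object compatibility, among the Segal and completeness maps already inverted in the target.

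The heart of the argument is the recognition that $d^*$ carries every $\Theta_n$-space $X$ to an object satisfying all four defining conditions of a complete Segal object in $\Theta_{n-1}$-spaces, up to Reedy fibrancy. Condition (1) is the special case of the $\Theta_n$-Segal condition with all labels equal, $X[m](c,\ldots,c) \to X[1](c) \times_{X[0]} \cdots \times_{X[0]} X[1](c)$; condition (2) is precisely that $M_{d^* X}^\Delta(x_0, x_1) = M_X^\Theta(x_0, x_1)$ is a $\Theta_{n-1}$-space; condition (3) is that $\tau_\Delta^* d^* X = \tau_\Theta^* X$ is a complete Segal space; and condition (4) is automatic since $d^* X([0], c) = X[0]$ is independent of $c$. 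Using the characterization of weak equivalences between fibrant objects as Dwyer-Kan equivalences, both in $\Thetansp$ (Theorem \ref{dkwkequiv}) and in $\css(\Theta_{n-1}Sp)$ (\cite[8.18]{inftyn2}), the two conditions of the criterion reduce to statements about mapping objects and homotopy categories. For the derived unit on a $\Theta_n$-space $X$, the first compatibility identifies its effect on underlying Segal spaces, and the second its effect on mapping objects, with those of the weak equivalence $d^* X \to R(d^* X)$; since $d^* X$ already satisfies the local conditions, this map induces weak equivalences on mapping objects and on the underlying complete Segal space, and essential surjectivity of $\Ho^\Theta$ is read off from the same underlying space. Reflection of weak equivalences between fibrant objects follows once the mapping objects and underlying Segal space of $d_* W$ are identified, up to weak equivalence, with $M_W^\Delta$ and $\tau_\Delta^* W$ for fibrant $W$.

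I expect the main obstacle to be precisely that last identification, namely controlling the right Kan extension $d_*$ on fibrant objects. Whereas $d^*$ is a restriction and hence transparent, $d_*$ is not computed levelwise, so proving that $M_{d_* W}^\Theta \simeq M_W^\Delta$ and $\tau_\Theta^* d_* W \simeq \tau_\Delta^* W$ for a complete Segal object $W$ requires genuine work; the natural route is to analyze the counit $d^* d_* W \to W$ and show it is a levelwise weak equivalence on the columns that carry the Dwyer-Kan data, using Reedy fibrancy of $W$ to replace the defining limits by homotopy limits, together with a separate comparison of $\Ho^\Theta(d_* W)$ with $\Ho^\Delta(W)$ for essential surjectivity. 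A secondary difficulty is the bookkeeping in the descent step, where the inductive shape of $\mathcal T_n$ and the behavior of the intertwining functor $V[1]$ under $d^*$ must be reconciled so that $d^*(V[1](\mathcal T_{n-1}))$ lands among the weak equivalences of $\css(\Theta_{n-1}Sp)$.
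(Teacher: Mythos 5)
The first thing to note is that the paper does not prove this statement: it is imported verbatim as \cite[7.1]{inftyn2}, so there is no internal proof to compare your attempt against, and your proposal has to stand as a reconstruction of the argument in that reference. Your two strict compatibilities are correct and are indeed the relevant ones: $d \circ \tau_\Delta = \tau_\Theta$ gives $\tau_\Delta^* \circ d^* = \tau_\Theta^*$, and the identification $M^\Delta_{d^*X}(x_0,x_1) \cong M^\Theta_X(x_0,x_1)$ is exactly \cite[3.10]{inftyn2}. Likewise, your check that $d^*$ of a $\Theta_n$-space satisfies the four conditions defining a complete Segal object is the content of \cite[6.3]{inftyn2}, and the descent of the Reedy-level Quillen pair to the localized structures by testing $d^*$ on the set $\mathcal T_n$ is the standard and correct step.

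There are, however, two genuine problems. The more serious is circularity: you verify the Quillen-equivalence criterion by translating weak equivalences between fibrant objects into Dwyer--Kan equivalences via Theorem \ref{dkwkequiv} and \cite[8.18]{inftyn2}. But in the present paper Theorem \ref{dkwkequiv} is \emph{deduced from} the Quillen equivalence you are trying to prove --- its proof explicitly invokes the adjunction $(d^*, d_*)$, the fibrancy results \cite[6.1, 6.3]{inftyn2}, and \cite[8.18]{inftyn2} --- and in \cite{inftyn2} the Dwyer--Kan characterization likewise sits downstream of the Quillen equivalence (Section 8 versus Section 7 there). So the Dwyer--Kan reformulation is not available at this point in the logical development; the non-circular route checks the derived unit and counit directly, by levelwise computation using fibrancy preservation on both sides. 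The second problem is that the genuinely hard input --- that $d_*$, a right Kan extension not computed levelwise, carries complete Segal objects to $\Theta_n$-spaces (this is \cite[6.1]{inftyn2}), which is what gives control of the counit $d^*d_*W \rightarrow W$ --- is precisely what you defer as ``the main obstacle'' without supplying an argument. You have located the hard point correctly, but since the derived-unit condition, the reflection condition, and your discarded Dwyer--Kan route all hinge on it, the proposal as written is a plausible plan whose load-bearing steps are either circular or missing. For the flavor of the computation that fills this hole, see the converse direction of the proof of Theorem \ref{dkwkequiv} in the present paper, where the Segal condition is used to evaluate the right Kan extension explicitly, yielding $(d_*Z)[1](c) \simeq W[1](c)$ for a Segal object $Z$ assembled from a $\Theta_n$-space $W$.
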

	
	In these two equivalent model structures, we have respective notions of Dwyer-Kan equivalence.  The functor $d^*$ is well-behaved with respect to the two, in the following sense.
	
	\begin{prop} 
		A map $f \colon X \rightarrow Y$ of $\Theta_n$-Segal spaces is a Dwyer-Kan equivalence if and only if $d^*f \colon d^*X \rightarrow d^*Y$ is a Dwyer-Kan equivalence in $\css(\Theta_{n-1}\css)$.
	\end{prop}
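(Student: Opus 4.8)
The plan is to unwind both notions of Dwyer--Kan equivalence and check that $d^*$ matches up the three pieces of data on which they depend: the set of objects, the mapping objects, and the underlying Segal space (hence the homotopy category). Two identifications drive the argument, both flowing from the explicit formula $d^*X([m],c) = X[m](c,\ldots,c)$, and once they are in place the proposition becomes a direct translation of definitions.

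First I would record the mapping object identification. Since $d([0],c)=[0]$ and $d([1],c)=[1](c)$, we have $d^*X([0],c)=X[0]$ and $d^*X([1],c)=X[1](c)$ for every $c\in\ob(\Theta_{n-1})$. Comparing the pullback \eqref{mappingobj} defining $M^\Delta_{d^*X}(x_0,x_1)(c)$ with the pullback of Definition \ref{thetamapping} defining $M_X^\Theta(x_0,x_1)(c)$, the two diagrams literally agree, so $M^\Delta_{d^*X}(x_0,x_1)=M_X^\Theta(x_0,x_1)$ as functors $\Theta_{n-1}^{\op}\to\SSets$, naturally in $X$. In particular the object sets $(d^*X)([0],[0])_0=X[0]_0$ are preserved, and the map of mapping objects induced by $d^*f$ is exactly the one induced by $f$. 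Second, $d\circ\tau_\Delta=\tau_\Theta$ as functors $\Delta\to\Theta_n$, whence $\tau_\Delta^*\circ d^*=\tau_\Theta^*$; that is, the underlying Segal space $\tau_\Delta^*(d^*X)$ of $d^*X$ coincides with the underlying Segal space $\tau_\Theta^*X$ of $X$. Therefore $\Ho^\Delta(d^*X)=\Ho^\Theta(X)$ with the functor induced by $d^*f$ equal to that induced by $f$, so essential surjectivity of $d^*f$ is equivalent to essential surjectivity of $f$.

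With these identifications the homotopically fully faithful condition for $f$ and the mapping-object condition for $d^*f$ are conditions on one and the same map $M_X^\Theta(x,y)\to M_Y^\Theta(fx,fy)$, so the only remaining content is to check that the two weak equivalence notions agree on this map. I would first verify that $d^*$ carries $\Theta_n$-Segal spaces to Segal objects: the Segal condition for $d^*X$ is precisely the Segal map of $X$ evaluated on the constant tuples $[m](c,\ldots,c)$, which is a weak equivalence by hypothesis, while Reedy fibrancy and the fact that the mapping objects are $\Theta_{n-1}$-Segal spaces follow from the identification above. The homotopically fully faithful condition asks that $M_X^\Theta(x,y)\to M_Y^\Theta(fx,fy)$ be a levelwise weak equivalence, whereas the Dwyer--Kan condition in $\css(\Theta_{n-1}Sp)$ asks that it be a weak equivalence in the relevant localized model structure; these coincide because both source and target are fibrant $\Theta_{n-1}$-Segal spaces, and a local weak equivalence between fibrant objects of a localized model structure is a levelwise weak equivalence. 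This matching of weak equivalences, together with pinning down the exact fibrancy class to which the mapping objects belong (so that local and levelwise weak equivalences between them do agree), is the one genuinely technical point; everything else is bookkeeping around the two identifications.
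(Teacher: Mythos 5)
Your outline coincides with the paper's: both arguments rest on the identification $M^\Delta_{d^*X}(x_0,x_1) \cong M^\Theta_X(x_0,x_1)$ (which the paper cites as \cite[3.10]{inftyn2}; your direct comparison of the two defining pullbacks is a fine substitute) and on the identification $\tau_\Delta^* d^* X = \tau_\Theta^* X$, which makes the essential surjectivity conditions coincide. The problem is in what you yourself single out as ``the one genuinely technical point.'' You argue that the levelwise condition (homotopical full faithfulness of $f$) agrees with the local condition (the Dwyer--Kan condition on $d^*f$, which asks that the mapping-object maps be weak equivalences in $\Theta_{n-1}Sp$) because ``both source and target are fibrant $\Theta_{n-1}$-Segal spaces, and a local weak equivalence between fibrant objects of a localized model structure is a levelwise weak equivalence.'' This conflates two different localizations. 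The fibrant objects of $\Theta_{n-1}Sp$, i.e.\ of $\Theta_{n-1}\css$, are the \emph{complete} objects; a $\Theta_{n-1}$-Segal space is fibrant only in $\Theta_{n-1}\sesp$, and your identifications show no more than that the mapping objects of a $\Theta_n$-Segal space are $\Theta_{n-1}$-Segal spaces. For objects that are merely Segal and not complete, the implication you need fails: by Theorem \ref{dkwkequiv} (Theorem \ref{dkincss} when $n-1=1$), a weak equivalence in $\Theta_{n-1}\css$ between $\Theta_{n-1}$-Segal spaces is precisely a Dwyer--Kan equivalence, and such maps are in general not levelwise weak equivalences. So the direction ``$d^*f$ Dwyer--Kan $\Rightarrow$ $f$ homotopically fully faithful'' is not established by your argument.

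The same conflation undermines your verification that $d^*X$ is a Segal object in $\Theta_{n-1}$-spaces, which is what licenses testing the Dwyer--Kan condition on $d^*f$ without first applying the localization functor $L_{\Se}$: by Proposition \ref{altsegalobj}, that notion requires the mapping objects to be $\Theta_{n-1}$-\emph{spaces}, i.e.\ fibrant in $\Theta_{n-1}Sp$, whereas you produce only $\Theta_{n-1}$-Segal spaces (and you assert Reedy fibrancy of $d^*X$ as following ``from the identification above,'' which it does not; this also needs an argument). This completeness of the mapping objects is exactly the nonformal input in the proof. The paper supplies it by citing \cite[6.3]{inftyn2}, whose proof shows that $d^*$ carries $\Theta_n$-Segal spaces to genuine Segal objects in $\Theta_{n-1}Sp$; once that is known, the mapping objects are fibrant in the localized structure and your ``local equals levelwise between fibrant objects'' step becomes valid. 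To repair your proof you would need to prove this statement (or quote it), not merely the Segal property of the mapping objects, since nothing in the pullback comparison forces any completeness.
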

	
	\begin{proof} 
		We prove in \cite[6.3]{inftyn2} that the functor $d^*$ preserves fibrant objects; looking at the appropriate parts of that proof shows that it takes $\Theta_n$-Segal spaces to Segal objects in $\Theta_{n-1}\css$.  Thus, since we have assumed that $X$ and $Y$ are fibrant, $d^*f \colon d^*X \rightarrow d^*Y$ is a Dwyer-Kan equivalence if and only if it is homotopically fully faithful and essentially surjective, without having to apply the localization functor $L_{\Se}$.
		
		First, observe that $\Ho^\Theta(X) \cong \Ho^\Delta(d^*X)$, since both are defined in terms of the underlying Segal space of $X$.  Therefore $\Ho^\Theta(X) \rightarrow \Ho^\Delta(Y)$ is essentially surjective precisely when $\Ho^\Delta(d^*X) \rightarrow \Ho^\Delta(d^*Y)$ is.
		
		In \cite[3.10]{inftyn2} we prove that there is a natural isomorphism of mapping objects
		\[ M^\Theta_X(x_0, x_1)(c) \cong M^\Delta_{d^*X}(x_0,x_1)(c) \]
		for any $x_0, x_1 \in X[0] = (d^*X)_0$ and $c \in \ob(\Theta_{n-1})$.  It follows that the map
		\[ M^\Theta_X(x_0, x_1)(c) \rightarrow M^\Theta_Y(fx_0,fx_1)(c) \]
		is a weak equivalence in $\Theta_{n-1}\css$ if and only if 
		\[ M^\Delta_{d^*X}(x_0, x_1)(c) \rightarrow M^\Delta_{d^*Y}(fx_0,fx_1)(c) \]
		is.  
	\end{proof}
	
	\begin{proof}[Proof of Theorem \ref{dkwkequiv}]
		By the previous proposition, we know that $f \colon X \rightarrow Y$ is a Dwyer-Kan equivalence in $\Thetansp$ if and only if $d^*(f) \colon d^*X \rightarrow d^*Y$ is a Dwyer-Kan equivalence in $\css(\Theta_{n-1}\css)$.  Theorem \ref{dkwkequiv} says that the latter statement is true if and only if $d^*X \rightarrow d^*Y$ is a weak equivalence in $\css(\Theta_{n-1}\css)$.  Thus, it suffices to prove that $f \colon X \rightarrow Y$ is a weak equivalence in $\Thetansp$ if and only if $d^*f \colon d^*X \rightarrow d^*Y$ is a weak equivalence in $\css(\Theta_{n-1}\css)$.
		
		First, suppose that $f$ is a weak equivalence in $\Thetansp$.  By definition of weak equivalences in a localized model structure, and using the fact that all objects are cofibrant in $\css(\Theta_{n-1}\css)$, it suffices to show that
		\[ \Map(d^*Y,Z) \rightarrow \Map(d^*X, Z) \]
		is a weak equivalence of simplicial sets for every complete Segal object $Z$.  Using the adjunction $(d^*, d_*)$, we can consider instead the map of simplicial sets
		\[ \Map(Y, d_*Z) \rightarrow \Map(X, d_*Z). \]
		We proved in \cite[6.1]{inftyn2} that if $Z$ is a complete Segal object, then $d_*Z$ is a $\Theta_n$-space.  Since we assumed that $f \colon X \rightarrow Y$ is a weak equivalence in $\Thetansp$, this map is a weak equivalence of simplicial sets, as we needed to show.
		
		Conversely, suppose that $d_*f$ is a weak equivalence in $\css(\Theta_{n-1}\css)$, so for any complete Segal object $Z$, we have a weak equivalence of simplicial sets
		\[ \Map(d^*Y, Z) \rightarrow \Map(d^*X,Z). \]
		We need to show that 
		\[ \Map(Y,W) \rightarrow \Map(X, W) \]
		is a weak equivalence for any $\Theta_n$-space $W$.  Again using the adjunction $(d^*, d_*)$, it suffices to show that any $\Theta_n$-space $W$ can be obtained as $d_*Z$ for some complete Segal object $Z$.  
		Again using the fact that $d_*$ takes complete Segal objects to $\Theta_n$-spaces, define $Z$ by $Z([1],c)= W[1](c)$ for any $c \in \ob(\Theta_{n-1})$; the rest of the structure is thus determined by the Segal and completeness conditions.  In particular, $Z([0],c)=W[0]$ for any $c \in \ob(\Theta_{n-1})$.  Since the functor $d_*$ is defined via right Kan extension, and we are applying it to a Segal object, we obtain 
		\[ \begin{aligned}
			(d_*Z)[1](c) & = \lim_{[1](c) \to [p](b, \ldots b)} Z([p], b) \\
			& \simeq \lim_{[1](c) \to [p](b, \ldots b)} \left(Z([1],b) \times_{Z([1], b)} \cdots \times_{Z([0],b)} Z([1], b) \right) \\
			& = \lim_{[1](c) \to [p](b, \ldots b)} \left(W[1](b) \times_{W[0]} \cdots \times_{W[0]} W[1](b) \right) \\
			& \simeq \lim_{[1](c) \to [p](b, \ldots b)} W[p](b,\ldots, b) \\
			& = W[1](c).
		\end{aligned} \]
		Thus $d_*Z=W$, as we wished to show.
	\end{proof}
	
	Now we can prove the following characterization of Dwyer-Kan equivalences between $\Theta_n$-spaces, which is a generalization of Theorem \ref{dkincss}\eqref{dkbetweencompleteincss}.  In the proof, we make use of the objects $[1]^{(i)}$ from Example \ref{onenotation}.
	
	\begin{theorem}
		A map $f \colon X \rightarrow Y$ of $\Theta_n$-spaces is a Dwyer-Kan equivalence if and only if it is a levelwise weak equivalence.
	\end{theorem}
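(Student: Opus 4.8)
The plan is to reduce everything to Theorem \ref{dkwkequiv} together with a standard property of Bousfield localizations, so that almost no new work is required. Since $X$ and $Y$ are $\Theta_n$-spaces, they are in particular $\Theta_n$-Segal spaces, so Theorem \ref{dkwkequiv} applies directly: the map $f$ is a Dwyer-Kan equivalence if and only if it is a weak equivalence in $\Theta_n\css$. It therefore suffices to show that, for a map between $\Theta_n$-spaces, being a weak equivalence in $\Theta_n\css$ is the same as being a levelwise weak equivalence.

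For the ``if'' direction I would recall that $\Theta_n\css$ is obtained as a localization of the Reedy model structure on $\SSets^{\Thetanop}$, whose weak equivalences are precisely the levelwise weak equivalences. Since every weak equivalence of an unlocalized model structure remains a weak equivalence after localization, a levelwise weak equivalence is automatically a weak equivalence in $\Theta_n\css$, and hence (by Theorem \ref{dkwkequiv}) a Dwyer-Kan equivalence. No fibrancy hypotheses are needed for this implication.

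For the converse I would invoke the fact that the $\Theta_n$-spaces are exactly the fibrant objects of $\Theta_n\css$, together with the principle that a local weak equivalence between fibrant objects of a Bousfield localization is already a weak equivalence in the underlying model structure --- the same fact used in the proof of Proposition \ref{altsegalobj}. Concretely, a weak equivalence in $\Theta_n\css$ between the fibrant objects $X$ and $Y$ must be a levelwise weak equivalence. Combining the two directions with Theorem \ref{dkwkequiv} yields the claim.

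Because the substance of the argument is carried out in Theorem \ref{dkwkequiv}, I do not expect a genuine obstacle; the only points requiring care are confirming that the cited localization principle applies, namely that $X$ and $Y$ are fibrant in $\Theta_n\css$ (which holds since they are $\Theta_n$-spaces) and that the unlocalized weak equivalences are the levelwise ones (which holds since the Reedy and injective structures coincide on $\SSets^{\Thetanop}$).
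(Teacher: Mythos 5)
Your proof is correct, but it takes a genuinely different route from the paper's. The paper proves the nontrivial direction directly: given a Dwyer-Kan equivalence $f$ between $\Theta_n$-spaces, it uses the Segal and completeness conditions to reduce the claim to the levels $[1]^{(n)}$ and $[1]^{(n-1)}$, and then runs a homotopy-fiber argument over $X[0] \times X[0]$ (using Reedy fibrancy and the fact that $X[0] \rightarrow Y[0]$ is an isomorphism on components) to conclude that those levels are weakly equivalent. You instead factor everything through Theorem \ref{dkwkequiv}: since $\Theta_n$-spaces are precisely the fibrant objects of $\Theta_n\css$, and in particular are $\Theta_n$-Segal spaces (the localizing set $\mathcal S_n$ is contained in $\mathcal T_n$, so $\mathcal T_n$-local objects are $\mathcal S_n$-local), that theorem converts ``Dwyer-Kan equivalence'' into ``weak equivalence in $\Theta_n\css$''; the standard localization facts --- Reedy weak equivalences remain weak equivalences after localizing, and $S$-local equivalences between $S$-local objects are weak equivalences in the unlocalized structure, the same principle invoked in the proof of Proposition \ref{altsegalobj} --- then identify $\Theta_n\css$-equivalences between fibrant objects with levelwise weak equivalences. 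This is non-circular, since Theorem \ref{dkwkequiv} is established before this statement and its proof does not rely on it. What your argument buys is economy: the substance is outsourced to Theorem \ref{dkwkequiv} plus Hirschhorn-style localization theory, exactly parallel to deducing Theorem \ref{dkincss}\eqref{dkbetweencompleteincss} from Theorem \ref{dkincss}\eqref{dkbetweensegalincss}. What the paper's argument provides instead is a self-contained, explicit demonstration of how full faithfulness and essential surjectivity force equivalences level by level, which exhibits concretely the role of the completeness conditions rather than appealing to the localization machinery a second time.
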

	
	\begin{proof}
		First, observe that any levelwise weak equivalence is necessarily a Dwyer-Kan equivalence, so we need only prove the converse statement.
		
		Suppose that $f \colon X \rightarrow Y$ is a Dwyer-Kan equivalence between $\Theta_n$-spaces.  Then for any $x,y \in X[0]_0$, we have that
		\[ M^\Theta_X(x,y) \rightarrow M^\Theta_Y(fx,fy) \]
		is a weak equivalence in $\Theta_{n-1}\css$, and that the map $X[0] \rightarrow Y[0]$ is an isomorphism on components.
		
		Recall that $M^\Theta_X(x,y)$ is a functor $\Theta_{n-1}^{\op} \rightarrow \SSets$, in fact a $\Theta_{n-1}$-space when $X$ is a $\Theta_n$-space, defined objectwise via pullbacks
		\[ \xymatrix{M^\Theta_X(x,y)(c) \ar[r] \ar[d] & X[1]([c]) \ar[d]\\
			\{(x,y)\} \ar[r] & X[0] \times X[0]. } \]
		In particular, we can understand $M^\Theta_X(x,y)$ by evaluating at the objects $[1]^{(i)}$ for $0 \leq i \leq n$, using the Segal conditions. Furthermore, the completeness conditions give us weak equivalences
		\[ X[1]^{(n-1)} \simeq X[1]^{(n)}_{\heq} \]
		for $0<i\leq n$.  Thus, it suffices to prove that the maps $X[1]^{(n)} \rightarrow Y[1]^{(n)}$ and $X[1]^{(n-1)} \rightarrow Y[1]^{(n-1)}$ are weak equivalences of simplicial sets.
		
		Restricting to homotopy equivalences, we take the pullback
		\[ \xymatrix{hM^\Theta_X(x,y)[1]^{(n-1)} \ar[r] \ar[d] & X[1]^{(n)}_{\heq} \ar[d] \\
			\{(x,y)\} \ar[r] & X[0] \times X[0],} \]
		which is a homotopy pullback since the right-hand map is a fibration, following from the Reedy fibrancy of $X$. Since weak equivalences are preserved by passing to subspaces of homotopy equivalences, by our assumption we know that
		\[ hM^\Theta_X(x,y)[1]^{(n-1)} \simeq hM^\Theta_Y(fx,fy)[1]^{(n-1)}. \]
		
		If we precompose with the weak equivalence $X[1]^{(n-1)} \rightarrow X[1]^{(n)}_{\heq}$ and the corresponding map for $Y$, we get a commutative square
		\[ \xymatrix{X[1]^{(n-1)} \ar[r] \ar[d] & X[0] \times X[0] \ar[d] \\
			Y[1]^{(n-1)} \ar[r] & Y[0] \times Y[0].} \]
		Since the fibers of the horizontal maps are weakly equivalent, we can conclude that this diagram is a homotopy pullback square.  By our assumption that $X[0] \rightarrow Y[0]$ induces an isomorphism on components, it follows that the map $X[1]^{(n-1)} \rightarrow Y[1]^{(n-1)}$ is a weak equivalence.
		
		Finally, we consider the diagram
		\[ \xymatrix{X[1]^{(n)} \ar[r] \ar[d] & X[0] \times X[0] \ar[d] \\
			Y[1]^{(n)} \ar[r] & Y[0] \times Y[0], } \]
		which is a pullback with horizontal maps fibrations, since $X$ and $Y$ are assumed to be Reedy fibrant.  Therefore, we obtain that the map $X[1]^{(n)} \rightarrow Y[1]^{(n)}$ is a weak equivalence.
	\end{proof}
	
	\section{Results on diagram categories with discreteness assumptions} \label{discreteresults}
	
	Since our goal is to develop models for $(\infty,n)$-categories using $\Theta_n^{\op}$-diagrams with discreteness conditions, generalizing the Segal category model, we need to prove analogues of several results that were used to establish a model structure for Segal categories.  Especially because we want to develop several different variants, in which we require discreteness at some levels but completeness at others, it is convenient to prove more general results.  In this section we consider functors $\mathcal C \rightarrow \SSets$ for some small category $\mathcal C$, and require the images of some specified objects of $\mathcal C$ to be discrete. For specificity, we give examples throughout this section of how this theory can be applied to the case when $\mathcal C = \Theta_2^{\op}$.
	
	We begin with the following result that can be obtained via the existence of a left Kan extension.
	
	\begin{prop} \label{discleftadjoint}
		Let $\mathcal C$ be a small category and $T \subseteq \ob(\mathcal C)$.  Consider the full subcategory of $\SSets^\mathcal C$ consisting all functors $X \colon \mathcal C \rightarrow \SSets$ such that $X(c)$ is discrete for all $c \in T$.  The inclusion of this subcategory into $\SSets^\mathcal C$ admits a left adjoint that we denote by $(-)_T$.  
	\end{prop}
	
	\begin{definition} \label{discretization}
		Let $\mathcal C$ be a small category and $T \subseteq \ob(\mathcal C)$.  Let $X \colon \mathcal C \rightarrow \SSets$.  The $T$-\emph{discretization} of $X$ is the functor $X_T \colon \mathcal C \rightarrow \SSets$ given by the image of $X$ under the left adjoint to the inclusion functor from Proposition \ref{discleftadjoint}.
	\end{definition}
	
	Roughly speaking, the passage from $X$ to $X_T$ takes each space $X(c)$ to the discrete space $\pi_0X(c)$ for each $c \in T$.  For $c \notin T$, the spaces $X(c)$ may or may not be affected, depending on how they interact with the spaces that are discretized.
	
	\begin{example}
		In the case of Segal categories, we considered functors $X \colon \Deltaop \rightarrow \SSets$ for which $X_0$ is discrete.  For $\mathcal C= \Deltaop$ and $T=\{[0]\}$, the $T$-discretization is the functor which is denoted by $(-)_r$ in \cite[\S 4]{thesis}.  In particular, in this case $(X_T)_0 = \pi_0 X_0$ and for $n \geq 1$, $(X_T)_n$ consists of the quotient space of $X_n$ given by collapsing the subspace in the image of iterated degeneracy maps from $X_0$ to its components.
	\end{example}
	
	This discretization functor is used in \cite[\S 5]{thesis} to modify a set of generating cofibrations in the Reedy model structure to serve as a generating set for the model structure for Segal categories; we apply a similar procedure for our model structures in Section \ref{modelstructures}.
	
	A different method of discretization is used in \cite[\S 6]{thesis} to describe the adjunction between the complete Segal space and Segal category model structures, and we generalize this approach in Section \ref{comparison}.  In preparation, we want to develop a general theory of skeleta and coskeleta to be used there.
	
	More specifically, given a subset $T$ of the objects of a small category $\mathcal C$, the skeleton and coskeleton of a functor $X \colon \mathcal C \rightarrow \SSets$ are given by the left and right adjoint, respectively, of a truncation functor that restricts to diagrams on the full subcategory of $\mathcal C$ with objects in $T$.  To that end, we make the following preliminary definition.
	
	\begin{definition}
		Let $\mathcal C$ be a small category and $T$ a subset of $\ob(\mathcal C)$.  Let $\mathcal C_T$ denote the full subcategory of $\mathcal C$ whose objects are those in $T$. The $T$-\emph{truncation} $\tr_T \colon \SSets^\mathcal C \rightarrow \SSets^{\mathcal C_T}$ is the functor induced by precomposition with the inclusion $\mathcal C_T \rightarrow \mathcal C$.
	\end{definition}
	
	The category $\SSets$ is sufficiently well-behaved that we can invoke the theory of Kan extensions to get the following result.
	
	\begin{prop}
		The functor $\tr_T$ admits a left adjoint $s_T$ and a right adjoint $c_T$.
	\end{prop}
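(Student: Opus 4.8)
The plan is to recognize $\tr_T$ as restriction along the inclusion functor and then produce its adjoints as Kan extensions. Write $\iota \colon \mathcal C_T \hookrightarrow \mathcal C$ for the inclusion of the full subcategory on $T$, so that by definition $\tr_T = \iota^*$ is precomposition with $\iota$. The standard theory of Kan extensions then applies: for any functor $\iota \colon \mathcal A \rightarrow \mathcal B$ of small categories and any category $\mathcal D$ that is cocomplete (respectively complete), the restriction functor $\iota^* \colon \mathcal D^{\mathcal B} \rightarrow \mathcal D^{\mathcal A}$ admits a left adjoint given by left Kan extension $\operatorname{Lan}_\iota$ (respectively a right adjoint given by right Kan extension $\operatorname{Ran}_\iota$). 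I would simply set $s_T = \operatorname{Lan}_\iota$ and $c_T = \operatorname{Ran}_\iota$.

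First I would verify the hypotheses. As a full subcategory of the small category $\mathcal C$, the category $\mathcal C_T$ is itself small, and $\iota$ is the evident fully faithful inclusion. The target $\SSets$ is a presheaf category, hence has all small limits and colimits, computed levelwise in $\Sets$; this is precisely what is needed for both of the relevant Kan extensions to exist pointwise.

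Next I would record the explicit pointwise formulas, both to confirm existence and for later use. For a functor $F \colon \mathcal C_T \rightarrow \SSets$ and an object $c$ of $\mathcal C$, set
\[ (s_T F)(c) = \colim_{(\iota \downarrow c)} F, \qquad (c_T F)(c) = \lim_{(c \downarrow \iota)} F, \]
where the colimit ranges over the comma category $\iota \downarrow c$ (objects $t \in T$ equipped with a map $\iota(t) \rightarrow c$) and the limit over $c \downarrow \iota$. Each such comma category is small, so these are genuine small (co)limits in the (co)complete category $\SSets$ and define simplicial sets; a morphism $c \rightarrow c'$ in $\mathcal C$ induces the appropriate functor on comma categories, assembling $s_T F$ and $c_T F$ into functors $\mathcal C \rightarrow \SSets$. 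The adjunction isomorphisms $\Hom_{\SSets^{\mathcal C}}(s_T F, G) \cong \Hom_{\SSets^{\mathcal C_T}}(F, \tr_T G)$ and $\Hom_{\SSets^{\mathcal C}}(G, c_T F) \cong \Hom_{\SSets^{\mathcal C_T}}(\tr_T G, F)$ are then the defining universal properties of the Kan extensions.

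There is no serious obstacle here: the argument is entirely formal once $\tr_T$ is identified with a restriction functor, and the only things to check — smallness of $\mathcal C_T$ and (co)completeness of $\SSets$ — are immediate. The one point worth flagging is that, because $\iota$ is fully faithful, both adjoints satisfy $\tr_T s_T \cong \id$ and $\tr_T c_T \cong \id$; although this is not required for the statement, it is convenient and I would verify it directly from the formulas above, using that $\iota \downarrow \iota(t)$ has terminal object $(t, \id)$ and $\iota(t) \downarrow \iota$ has initial object $(t, \id)$.
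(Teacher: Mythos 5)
Your proposal is correct and follows exactly the route the paper takes: the paper's justification is precisely the observation that $\SSets$ is complete and cocomplete so that the theory of (pointwise) Kan extensions along the inclusion $\mathcal C_T \hookrightarrow \mathcal C$ applies, yielding $s_T$ and $c_T$ as left and right Kan extension. Your added detail --- the explicit comma-category formulas and the remark that full faithfulness of the inclusion gives $\tr_T s_T \cong \id$ and $\tr_T c_T \cong \id$ --- is accurate and simply fills in what the paper leaves implicit.
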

	
	The following definition allows us to think of these adjoints as functors $\SSets^\mathcal C \rightarrow \SSets^\mathcal C$.
	
	\begin{definition}
		Let $\mathcal C$ be a small category, $X \colon \mathcal C \rightarrow \SSets$ a functor, and $T \subseteq \ob(\mathcal C)$.  The $T$-\emph{skeleton} of $X$ is $\sk_T(X):= s_T \circ \tr_T(X)$ and the $T$-\emph{coskeleton} of $X$ is $\cosk_T(X)=c_T \circ \tr_T(X)$.
	\end{definition}
	
	\begin{prop}
		The $T$-skeleton and $T$-coskeleton functors define an adjoint pair
		\[ \sk_T \colon \SSets^\mathcal C \rightleftarrows \SSets^\mathcal C \colon \cosk_T. \]
	\end{prop}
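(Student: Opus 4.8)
The plan is to deduce this adjunction formally from the two adjunctions established in the previous proposition, namely $s_T \dashv \tr_T$ and $\tr_T \dashv c_T$, by composing them. Since both $\sk_T$ and $\cosk_T$ are built by applying $\tr_T$ first and then one of its adjoints, no new construction is required; the content is entirely a manipulation of hom-sets, and the argument goes through verbatim for any functor admitting both a left and a right adjoint.

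Concretely, for functors $X, Y \colon \mathcal C \rightarrow \SSets$ I would compute, naturally in both variables,
\[ \Hom_{\SSets^{\mathcal C}}(\sk_T X, Y) = \Hom_{\SSets^{\mathcal C}}(s_T \tr_T X, Y) \cong \Hom_{\SSets^{\mathcal C_T}}(\tr_T X, \tr_T Y) \cong \Hom_{\SSets^{\mathcal C}}(X, c_T \tr_T Y) = \Hom_{\SSets^{\mathcal C}}(X, \cosk_T Y), \]
where the first isomorphism is the adjunction $s_T \dashv \tr_T$ applied with the object $\tr_T X$ of $\SSets^{\mathcal C_T}$, and the second is $\tr_T \dashv c_T$ applied with the object $\tr_T Y$. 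The crucial observation that makes the chain close up is that the common middle term is the single hom-set $\Hom_{\SSets^{\mathcal C_T}}(\tr_T X, \tr_T Y)$: the first adjunction exhibits it with $\tr_T Y$ occupying the target slot, while the second exhibits it with $\tr_T X$ occupying the source slot, and these two descriptions agree on the nose rather than merely up to isomorphism.

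Naturality in $X$ and $Y$ then follows because each isomorphism in the chain is natural in its respective arguments and $\tr_T$ is functorial, so the composite bijection is natural in $X$ and $Y$ simultaneously; by the characterization of adjunctions via natural hom-set isomorphisms, this establishes $\sk_T \dashv \cosk_T$. I expect no real obstacle here: the only step requiring a moment's care is tracking which variable plays the role of the pivot in each of the two constituent adjunctions, so as to confirm that the intermediate hom-set is literally the same object. Everything else is the standard fact that if $\tr_T$ has both a left adjoint $s_T$ and a right adjoint $c_T$, then $s_T \tr_T$ is left adjoint to $c_T \tr_T$.
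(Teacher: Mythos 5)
Your proposal is correct and is essentially identical to the paper's proof: both establish the adjunction by the same four-step chain of natural hom-set isomorphisms, unfolding the definitions of $\sk_T$ and $\cosk_T$ and applying $s_T \dashv \tr_T$ and then $\tr_T \dashv c_T$ through the common middle term $\Hom_{\SSets^{\mathcal C_T}}(\tr_T X, \tr_T Y)$. Your additional remarks on naturality and on the general fact that $s_T \tr_T \dashv c_T \tr_T$ are sound but add nothing beyond what the paper's argument already contains implicitly.
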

	
	\begin{proof}
		Using the above adjunction, for any $X, Y \colon \mathcal C \rightarrow \SSets$, we obtain natural isomorphisms
		\[ \begin{aligned}
			\Hom_{\SSets^\mathcal C}(\sk_T(X), Y) & \cong \Hom_{\SSets^\mathcal C}(s_T \circ \tr_T(X), Y) \\
			& \cong \Hom_{\SSets^{\mathcal C_T}}(\tr_T(X), \tr_T(Y)) \\
			& \cong \Hom_{\SSets^\mathcal C}(X, c_T \circ \tr_T(Y)) \\
			& \cong \Hom_{\SSets^\mathcal C}(X, \cosk_T(Y).
		\end{aligned} \]
	\end{proof}
	
	\begin{example}
		When $\mathcal C = \Deltaop$ and $T=\{[k] \mid 0 \leq k \leq m\}$ for some $m \geq 0$, then we recover the usual $m$-skeleton $\sk_m(X)$ and $m$-coskeleton $\cosk_m(X)$ of a simplicial space.  When $m=0$, the 0-skeleton of $X$ is the constant simplicial space on the simplicial set $X_0$, whereas the 0-coskeleton is given by $\cosk_0(X)_k = (X_0)^{k+1}$ for each $k \geq 0$.
	\end{example}
	
	\begin{example}
		Suppose that $\mathcal C = \Theta_2^{\op}$, and let us consider the coskeleta associated to subsets $T$ of 
		\[ S=\{[0], [1]([0]) \} \subseteq \ob(\Theta_2^{\op}). \]
		
		We start with the case when $T$ is the subset consisting of the object $[0]$; we denote the associated coskeleton functor by $\cosk_{[0]}$.  Given a functor $X \colon \Theta_2^{\op} \rightarrow \SSets$, we can use the fact that $\Theta_2$ is built from $\Delta$ in particular ways to describe $\cosk_{[0]}(X)$.  
		
		First, when we evaluate at any object of the form $[q]([0], \ldots, [0])$, we can use the description of the 0-coskeleton of a simplicial space to see that 
		\[ (\cosk_{[0]}X)[q]([0], \ldots, [0]) \cong X[0]^{q+1}. \]
		In particular, we have
		\[ (\cosk_{[0]}X)[1]([0]) \cong X[0]^{2}. \]
		Now, we can make use of the simplicial structure built into the objects $[1]([c])$ to observe that
		\[ (\cosk_{[0]}X)[1]([c]) \cong X[0]^{2}, \]
		and indeed one can check that
		\[ (\cosk_{[0]}X)[q]([c_1], \ldots, [c_q]) \cong \left(X[0]^{q+1}\right) \]
		for any $q$.
		
		Now, let us consider instead the case when $T$ is the subset containing only the object $[1]([0])$.  In this situation, the simplicial 0-coskeleton appears in the objects $[1]([c])$, for any $c \geq 0$, in that 
		\[ (\cosk_{[1]([0])}X)[1]([c]) \cong X[1]([0])^{c+1}. \]
		At the object $[0]$, we must have
		\[  (\cosk_{[1]([0])}X)[0] \cong \Delta[0]. \]
		For the object $[1]([1])$, we must get
		\[ (\cosk_{[1]([0])}X)[1]([1]) \cong X[1]([0]) \times X[1][0]; \]
		a general formula for evaluating at objects $[q]([c_1], \ldots, [c_q])$ quickly becomes more complicated.
		
		Finally, we consider the coskeleton associated to $S$ itself.  Here, we get 
		\[ (\cosk_S X)[0] = X[0] \]
		and
		\[ (\cosk_S X)[1]([0]) = X[1]([0]). \]
		It is not hard to check that 
		\[ (\cosk_S X)[q]([0], \ldots, [0]) \cong X[1]([0]) \times_{X[0]} \cdots \times_{X[0]} X[1]([0]).  \]
		We leave the descriptions upon evaluating at a general $[q]([c_1], \ldots, [c_q])$ to the reader.
	\end{example}
	
	\section{Model structures for $\Theta_n$-models with discreteness assumptions} \label{modelstructures}
	
	Now we turn to our question of having models for $(\infty, n)$-categories given by functors $\Thetanop \rightarrow \SSets$ that satisfy some discreteness conditions.  The first such condition we could ask for is on the level of objects, namely that a functor $X \colon \Thetanop \rightarrow \SSets$ have $X[0]$ a discrete simplicial set.  In other words, we want the simplicial set $X[1]([0])= X[1]^{(0)}$ to be discrete, but we also want to ask the same of other $X[1]^{(i)}$ for any $0 \leq i <n$.  
	
	Let us apply the results of Section \ref{discreteresults} to the category $\mathcal C=\Thetanop$ and the set 
	\[ S= \{[1]^{(i)} \mid 0 \leq i <n \}. \]
	
	\begin{definition}
		A $\Theta_n$-\emph{Segal precategory} is a functor $X \colon \Thetanop \rightarrow \SSets$ such that $X[1]^{(i)}$ is discrete for all $[1]^{(i)}$ in $S$.  It is a $\Theta_n$-\emph{Segal category} if it is additionally a $\Theta_n$-Segal space.
	\end{definition}
	
	\begin{remark}
		Observe that this definition includes an assumption that a $\Theta_n$-Segal precategory is Reedy fibrant.  As discussed in Remark \ref{reedyrmk}, this choice is less common for Segal categories and their generalizations, but it is convenient for us here, given what we want to prove. 
		
		As for Segal categories, the model structures that we develop here, with fibrant objects the Reedy fibrant $\Theta_n$-Segal precategories, have counterparts whose fibrant objects are projective fibrant instead.  We have chosen not to elaborate on this point in this paper, however.
	\end{remark}
	
	However, we can also restrict ourselves only to the objects in some subset $T \subseteq S$; indeed, we give our proofs in this generality.  In what follows, we assume that the set $S$ is fixed to be as defined above, but that $T$ is an arbitrary nonempty subset of $S$; we impose further conditions on $T$ momentarily.
	
	\begin{definition}
		Let $T \subseteq S$.  A $\Theta_n$-$T$-\emph{Segal precategory} is a functor $X \colon \Thetanop \rightarrow \SSets$ such that $X[1]^{(i)}$ is discrete for all $[1]^{(i)}$ in $T$.  It is a $\Theta_n$-$T$-\emph{Segal category} if it is additionally a $\Theta_n$-Segal space that is complete for objects $[1]^{(i)}$ in $S \setminus T$, in the sense that the map $X[1]^{(i)} \simeq X[1]^{(i+1)}_{\heq}$ is a weak equivalence.
	\end{definition}
	
	We denote the category of $\Theta_n$-$T$-Segal precategories by $\SSets^{\Thetanop}_T$.
	
	\begin{remark}  \label{bottomup}
		In principle, this definition is sensible for any subset $T \subseteq S$.  However, we claim that for many choices of $T$ we recover the same objects.  Suppose that $[1]^{(i)}$ is an object of $T$.  Then if $X$ is a $\Theta_n$-$T$-Segal category, it follows from the definition that $X[1]^{(i)}$ is a discrete simplicial set, and hence $X[1]^{(i)}_{\heq}$ must also be discrete.  Since we also assume that $X[1]^{(i-1)} \simeq X[1]^{(i)}_{\heq}$, it follows that $X[1]^{(i-1)}$ is homotopy discrete.  Indeed, since this weak equivalence is given by a degeneracy map, $X[1]^{(i-1)}$ is a retract of the discrete space $X[1]^{(i)}_{\heq}$, hence also discrete.
		
		Therefore, it suffices to consider $\Theta_n$-$T$-Segal precategories for which discretization is made ``from the bottom up".  Consequently, in what follows, we assume that $T=\{[1]^{(0)}, \ldots, [1]^{(j)} \}$ for some $0 \leq j \leq n-1$.
	\end{remark}
	
	A key feature that we want for our model structure is that the weak equivalences be the Dwyer-Kan equivalences.  To make sense of such maps for arbitrary diagrams, we need an appropriate localization functor taking a diagram to one that is a $\Theta_n$-$T$-Segal category.  Thus, we need to verify that such a localization results in a $\Theta_n$-$T$-Segal precategory rather than a more general diagram $\Thetanop \rightarrow \SSets$, generalizing the argument for Segal categories in \cite[\S 5]{thesis}.
	
	\begin{prop} \label{discreteloc}
		If $X$ is a $\Theta_n$-$T$-Segal precategory, then there exists a functor 
		\[ L \colon \SSets^{\Thetanop}_{T} \rightarrow \SSets^{\Thetanop}_{T} \] 
		such that $LX$ is a $\Theta_n$-$T$ Segal category that is weakly equivalent to $X$ in $\Theta_n\sesp$.
	\end{prop}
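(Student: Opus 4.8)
The plan is to mimic the construction of the fibrant-replacement functor for Segal precategories in \cite[\S 5]{thesis}. Applying a localization functor directly is not enough: although fibrant replacement makes $X$ into a $\Theta_n$-Segal space, it will in general destroy the on-the-nose discreteness at the objects of $T$, so the result need not lie in $\SSets^{\Thetanop}_T$. The remedy is to interleave fibrant replacement with the $T$-discretization functor $(-)_T$ of Definition \ref{discretization} and pass to a filtered colimit. Throughout I write $L_{\Se}$ for the functorial fibrant replacement in the localization of the Reedy structure at the Segal maps $\mathcal S_n$ together with the completeness maps indexed by the objects of $S\setminus T$; when $T=S$ this adds nothing and recovers $\Theta_n\sesp$, and in general its fibrant objects are exactly the $\Theta_n$-Segal spaces that are complete at $S\setminus T$.

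The first step is a homotopical observation about the objects of $T$. The Segal maps in $\mathcal S_n$ are isomorphisms when evaluated at any free-standing cell $[1]^{(i)}$, since each such cell has outer simplicial degree $1$ at every level of its nesting and so is never decomposed by a Segal map; and the completeness maps we invert only alter the cells $[1]^{(i)}$ with $i>j$ (recall from Remark \ref{bottomup} that $T=\{[1]^{(0)},\ldots,[1]^{(j)}\}$). Consequently $L_{\Se}$ preserves the homotopy type of $X[1]^{(i)}$ for every $[1]^{(i)}\in T$. Since $X$ is a $\Theta_n$-$T$-Segal precategory these $X[1]^{(i)}$ are already discrete, so $(L_{\Se}X)[1]^{(i)}$ is homotopy discrete at the objects of $T$. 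This is exactly what makes discretization harmless: the comparison between $L_{\Se}X$ and its $T$-discretization is an isomorphism away from $T$ and a weak equivalence of homotopy-discrete simplicial sets at the objects of $T$, hence a weak equivalence in the localized model structure.

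I would then build $L$ as a transfinite composite. Set $X_0=X$ and, given $X_k\in\SSets^{\Thetanop}_T$, put $X_{k+1}=(L_{\Se}X_k)_T$ with the evident map $X_k\rightarrow L_{\Se}X_k\rightarrow X_{k+1}$; each such map lands back in $\SSets^{\Thetanop}_T$ and, by the previous paragraph, is a weak equivalence. I would set $LX=\colim_k X_k$. A single application does not suffice because discretizing the objects of $T$ can break Reedy fibrancy and the Segal condition at the cells $[1]^{(i)}$ immediately above $T$, whose matching objects involve the discretized levels; reapplying $L_{\Se}$ repairs this, and the colimit is a simultaneous fixed point of both operations. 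Here I would use that matching objects are finite limits and therefore commute with filtered colimits: this shows that $LX$ remains Reedy fibrant, that the Segal maps and the relevant completeness maps remain weak equivalences, and that the objects of $T$ stay discrete. Stability of the localized weak equivalences under these filtered colimits then yields that $X\rightarrow LX$ is a weak equivalence, which for $T=S$ is precisely a weak equivalence in $\Theta_n\sesp$.

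The delicate point, and where I expect the main obstacle to lie, is confirming the completeness conditions at $S\setminus T$, so that $LX$ is a genuine $\Theta_n$-$T$-Segal category and not merely a Segal space discrete at $T$. Because the completeness maps affect only the cells $[1]^{(i)}$ with $i>j$ while discretization affects only the cells with $i\leq j$, the two operations act in disjoint ranges, so discretization does not interfere with the higher completeness and the telescope of the previous paragraph goes through unchanged. To identify the resulting completeness concretely I would pass through the Quillen equivalence $d^*\colon\Thetansp\rightleftarrows\css(\Theta_{n-1}Sp)$ together with the identification of mapping objects $M^\Theta_{LX}(x_0,x_1)$ with $\Theta_{n-1}$-objects, reducing the completeness of $LX$ at the objects of $S\setminus T$ to the completeness already built into $L_{\Se}$ in the enrichment directions. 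The real bookkeeping is thus in showing that the telescope converges to an object that is at once Reedy fibrant, Segal, complete at $S\setminus T$, and discrete at $T$, which I would control by the same degreewise matching-object analysis used for Segal categories in \cite[\S 5]{thesis}.
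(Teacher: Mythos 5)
Your strategy—apply the standard localization $L_{\Se}$ and then repair discreteness afterwards by iterated discretization—is genuinely different from the paper's, and it founders on its first key claim. It is false that the Segal maps are isomorphisms when evaluated at the cells $[1]^{(i)}$ for $i \geq 1$. Evaluation at $[1]^{(i)}$ means applying $\Hom_{\Theta_n}([1]^{(i)},-)$, and this detects not only the spine but also the ``long edges'' $\delta \colon [1] \to [q]$ with $\delta(1)-\delta(0) \geq 2$: concretely, in $\Theta_2$ the set $\Hom([1]([0]),[2]([0],[0]))$ has six elements while $G[2]([0],[0])([1]([0]))$ has only five. This is not a bookkeeping issue; it reflects the fact that Segal localization freely adjoins composites of $i$-morphisms, which show up as new path components at level $[1]^{(i)}$. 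Taking $X = G[2]([0],[0])$, which is a perfectly good $\Theta_2$-$S$-Segal precategory, $L_{\Se}X$ is levelwise weakly equivalent to $\Theta[2]([0],[0])$ (the latter is already a levelwise discrete $\Theta_2$-Segal space and $G \to \Theta$ is a localizing map), so $(L_{\Se}X)[1]([0])$ has six components while $X[1]([0])$ has five. Hence $L_{\Se}$ does not preserve homotopy types at the objects of $T$, and the property you actually need—that $(L_{\Se}X)[1]^{(i)}$ is homotopy discrete—does not follow from your argument; establishing it is essentially the hard content of the proposition, not a consequence of a formal observation about the localizing maps. (For $T=\{[0]\}$ your claim is correct, since every localizing map is an isomorphism at $[0]$; the difficulty is precisely at levels $1 \leq i \leq j$.) The paper avoids this entirely by never discretizing after the fact: it modifies the small object argument itself, omitting the Reedy generating acyclic cofibrations indexed by the objects of $T$ (a $T$-discrete diagram already lifts against these) and checking that pushouts along the remaining generating maps do not disturb the values at the objects of $T$, so discreteness is preserved on the nose at every stage.

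Even granting homotopy discreteness, your discretization step is unjustified. The functor $(-)_T$ of Definition \ref{discretization} that is right adjoint to the inclusion replaces a level $t \in T$ by its set of $0$-simplices (and cuts down the other levels by a pullback, so it is not an isomorphism away from $T$ either); the inclusion of the vertices into a homotopy discrete simplicial set is not a weak equivalence when a component contains more than one vertex, as for $\Delta[1]$. Since fibrant replacement floods the levels of $T$ with new vertices (all the freely adjoined composites and their witnesses), the maps $X_k \to X_{k+1}$ in your telescope need not be weak equivalences in $\Theta_n\sesp$, so the colimit need not be weakly equivalent to $X$. The alternative reading of $(-)_T$ as $\pi_0$-collapse does not rescue this, because collapsing the levels of $T$ to components is not even a functor on $\Thetanop$-diagrams: there are no induced structure maps out of the collapsed levels. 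So both halves of the claim that ``discretization is harmless''—levelwise, and homotopically—fail, and with them the convergence of the iteration.
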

	
	\begin{proof}
		Let us first suppose that $T=S$, so that we discretize at every level.  We want to modify the localization functor $L$ in $\Theta_n\sesp$ in such a way that if $X[1]^{(i)}$ is discrete for some $i$, then so is $(LX)[1]^{(i)}$.
		
		We can think of this original localization as occurring in two stages: first, it provides a Reedy fibrant replacement of an object $X$, and then it gives an additional localization so that the resulting object satisfies the Segal conditions. Let us first look at the Reedy fibrant replacement process.
		
		Recall from Section \ref{thetanreview} that the generating acyclic cofibrations for the Reedy structure can be taken to be those of the form
		\[ \partial \Theta[q](c_1, \ldots, c_q) \times \Delta[m] \cup \Theta[q](c_1, \ldots, c_q) \times \Lambda^k[m] \rightarrow \Theta[q](c_1, \ldots, c_q) \times \Delta[m] \]
		where $m \geq 1$, $0 \leq k \leq m$, and $[q](c_1, \ldots, c_q) \in \ob(\Theta_n)$.  Thus, a functorial Reedy fibrant replacement in $\SSets^{\Thetanop}$ is given by taking iterated pushouts along these maps.  However, when $[q](c_1, \ldots, c_q) = [1]^{(i)}$ for some $0 \leq i < n$, the resulting pushouts may not satisfy the required discreteness condition on $X[1]^{(i)}$.  
		
		For example, if $q=0$, taking a pushout of $X$ along such a map to get some $X'$ is effectively given by taking a pushout
		\[ \xymatrix{\Lambda^k[m] \ar[r] \ar[d] & X[0] \ar[d] \\
			\Delta[m] \ar[r] & X'[0]. } \]
		Such a pushout need not be discrete.  On the other hand, if $X[0]$ is discrete, then the image of $\Lambda^k[m]$ is one of the points of $X[0]$ and therefore this map extends to a map $\Delta[m] \rightarrow X[0]$.  In other words, $X$ already satisfies the desired lifting condition with respect to the generating acyclic cofibrations for which $q=0$, so there is no harm in omitting these maps when taking the localization.
		
		A similar argument works for any $[q](c_1, \ldots, c_q) = [1]^{(i)}$, so we obtain a Reedy fibrant replacement for $X$ by taking iterated pushouts along the maps
		\[ \xymatrix{\partial \Theta[q](c_1, \ldots, c_q) \times \Delta[m] \cup \Theta[q](c_1, \ldots, c_q) \times V[m,k] \ar[d] \\
			\Theta[q](c_1, \ldots, c_q) \times \Delta[m]} \]
		where $m \geq 1$, $0 \leq k \leq m$, and $[q](c_1, \ldots, c_q) \in \ob(\Theta_n)$ is not an object $[1]^{(i)}$ of $T$.
		
		Now, let us turn to the localization to obtain a Segal $\Theta_n$-space.  Let us first consider the maps
		\[ G[q](c_1, \ldots, c_q) \rightarrow \Theta[q](c_1, \ldots, c_q) \]
		for any $q \geq 0$ and $c_i \in \ob(\Theta_{n-1})$.  
		Since these maps are cofibrations between cofibrant objects in the Reedy model structure, and $X$ can now be assumed to be Reedy fibrant, we know that $X$ is local with respect to these maps precisely when a lift exists in any diagram
		\[ \xymatrix{\partial \Delta[m] \ar[r] \ar[d] & \Map(\Theta[q](c_1, \ldots, c_q), X) \ar[d] \\
			\Delta[m] \ar[r] \ar@{-->}[ur] & \Map(G[q](c_1, \ldots, c_q), X).} \]
		The existence of such a lift is equivalent to the existence of a lift in the diagram
		\[ \xymatrix{G[q](c_1, \ldots, c_q) \times \Delta[m] \cup \Theta[q](c_1, \ldots, c_q) \times \partial \Delta[m] \ar[r] \ar[d] & X \\
			\Theta[q](c_1, \ldots, c_q) \times \Delta[m] \ar@{-->}[ur] & .} \]
		Thus, it is these maps on the left-hand side that we take pushouts along to obtain a local object.  There is potential concern when $q=0$, but in that case these maps are the identity, since $G[0] =\Delta[0]$.  Therefore no problems arise when we take pushouts along such maps.  When $q=1$, taking pushouts along these maps again has no effect because $G[1](c) = \Theta[1](c)$ for any $c \in \ob(\Theta_{n-1})$.
		
		The other maps used in the localization can be shown similarly to present no difficulties, since they are inductively defined, essentially again using the fact that $G[1](c) = \Theta[1](c)$ for any $c$.  By way of illustration, if 
		\[ G[r](d_1, \ldots, d_r) \rightarrow \Theta[r](d_1, \ldots, d_r) \]
		are the analogous maps in $\SSets^{\Theta_{n-1}^{\op}}$, then we need to localize $\SSets^{\Thetanop}$ with respect to the maps
		\[ V[1](G[r](d_1, \ldots, d_r)) \rightarrow V[1](\Theta[r](d_1, \ldots, d_r)). \]
		These maps are the identity upon evaluation at $[0]$, so taking a pushout along them presents no problem at that level.  The argument that taking pushouts along these maps when $[r](d_1, \ldots, d_r)= [1]^{(1)}= [1]([0])$ does not alter discreteness is essentially the same as the argument above for $q=0$, and with a similar shift for the other $[1]^{(i)}$.
		
		Thus, we have that applying the described modification of Reedy fibrant replacement, followed by the usual $\Theta_n$-Segal localization, results in a $\Theta_n$-Segal category, as we wished to show.
		
		Finally, we consider the case when $T \neq S$, so that $T=\{[1]^{(i)} \mid 0 \leq i \leq j\}$ for some fixed $j<n$.  Now, we need to localize further so that at the levels corresponding to elements of $S \setminus T$, we have that $LX$ satisfies completeness.  Recalling the notation set up in the paragraph before Theorem \ref{thetanmc}, we define the set 
		\[ \mathcal T'_{n,j} = \Cpt_{\Theta_n} \cup V[1](\Cpt_{\Theta_{n-1}}) \cup \cdots \cup V[1]^{n-j-1}(\Cpt_{\Theta_{j+1}}). \]
		A similar argument as above shows that localizing with respect to these maps does not affect discreteness at the previously indicated levels, and results in an object with the required completeness conditions, namely, a $\Theta_n$-$T$ Segal category.
	\end{proof}
	
	We use this result to make sense of Dwyer-Kan equivalences $X \rightarrow Y$ between functors $\Thetanop \rightarrow \SSets$ that are required to be discrete at those objects $[1]^{(i)}$ in $T$ but that may not be Segal $\Theta_n$-spaces.
	
	\begin{definition} \label{dkequivdefn}
		A map $X \rightarrow Y$ in $\SSets^{\Thetanop}_{T}$ is a \emph{Dwyer-Kan equivalence} if the map $LX \rightarrow LY$ is fully faithful and essentially surjective, i.e., a Dwyer-Kan equivalence in the sense of Segal $\Theta_n$-spaces. 
	\end{definition}
	
	\begin{theorem} \label{discretthetamc}
		There is a model structure $\Theta_n\Secat_T$ on the category of $\Theta_n$-$T$-Segal precategories in which:
		\begin{enumerate}
			\item the weak equivalences are the Dwyer-Kan equivalences;
			
			\item the cofibrations are the monomorphisms; and 
			
			\item the fibrant objects are the $\Theta_n$-$T$-Segal categories that are complete at every element $[1]^{(i)}$ of $S \setminus T$. 
		\end{enumerate}
	\end{theorem}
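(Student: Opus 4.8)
The plan is to apply Kan's recognition principle, Theorem~\ref{cofgen}, following the strategy used for Segal categories in \cite[\S 5]{thesis}. The first hypothesis to check is that $\SSets^{\Thetanop}_T$ has all small limits and colimits. Since the functor $\disc \colon \Sets \rightarrow \SSets$ sending a set to the corresponding discrete simplicial set has both a left adjoint, given by $\pi_0$, and a right adjoint, given by evaluation at $[0]$, it preserves all small limits and colimits, so the discrete simplicial sets are closed under small limits and colimits in $\SSets$. Computing levelwise in $\SSets^{\Thetanop}$, it follows that the limit or colimit of any diagram of $\Theta_n$-$T$-Segal precategories is again discrete at every object of $T$; hence $\SSets^{\Thetanop}_T$ is complete and cocomplete, with limits and colimits created by the inclusion into $\SSets^{\Thetanop}$. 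Next I would verify that the Dwyer-Kan equivalences of Definition~\ref{dkequivdefn} satisfy the two-out-of-three property and are closed under retracts. Because $L$ is a functor and, by Theorem~\ref{dkwkequiv}, a map of Segal $\Theta_n$-spaces is homotopically fully faithful and essentially surjective precisely when it is a weak equivalence in $\Theta_n\css$, both properties are inherited from the weak equivalences of $\Theta_n\css$.

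I would then fix a set $I$ of generating cofibrations whose saturation is exactly the class of monomorphisms of $\SSets^{\Thetanop}_T$. Such a set is obtained by adapting the generating cofibrations of the injective model structure on $\SSets^{\Thetanop}$, recalled in Section~\ref{thetanreview}, namely the maps
\[ \partial\Delta[m] \times \Theta[q](c_1, \ldots, c_q) \cup \Delta[m] \times \partial\Theta[q](c_1, \ldots, c_q) \rightarrow \Delta[m] \times \Theta[q](c_1, \ldots, c_q), \]
to the discrete setting: one applies the $T$-discretization to these maps and corrects the low-dimensional cases in which discretization collapses connected components and so fails to preserve monomorphisms, exactly as in the Segal category case \cite[\S 5]{thesis}. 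Since $\SSets^{\Thetanop}_T$ is a reflective, hence locally presentable, subcategory of $\SSets^{\Thetanop}$, every object is small and $I$ permits the small object argument. Taking pushouts and transfinite composites of monomorphisms yields monomorphisms, and the converse containment — that every monomorphism is an $I$-cofibration — is the key point guaranteed by this choice of $I$; the $I$-injectives are then the levelwise trivial fibrations, which are levelwise weak equivalences and hence Dwyer-Kan equivalences.

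The construction of a set $J$ of generating acyclic cofibrations is the heart of the argument and the step I expect to be the \emph{main obstacle}. Because no ambient localized model structure is available here, $J$ cannot simply be read off as a set of acyclic generators; instead it must be assembled by hand from the monomorphisms among the maps used to build the localization $L$ in the proof of Proposition~\ref{discreteloc} — the suitably discretized Reedy acyclic generators together with the Segal localizing maps $\Se_{\Theta_n}$ and their intertwined versions $V[1](\cdots)$, and, when $T \neq S$, the completeness maps $\mathcal T'_{n,j}$ — together with a cardinality bound ensuring that $J$ is a set. The essential lemma is that every $J$-cofibration is a Dwyer-Kan equivalence, which I would deduce from the closure of Dwyer-Kan equivalences under pushout along monomorphisms and under transfinite composition; these closure properties follow from Theorem~\ref{dkwkequiv} and the good behavior of $L$, just as the corresponding statements are handled for Segal categories in \cite[\S 5]{thesis}.

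Finally I would verify conditions~\ref{cofgen1}--\ref{cofgen4}. Condition~\ref{cofgen1} has already been noted. Condition~\ref{cofgen2} follows from the inclusion of $J$ into the monomorphisms together with the closure lemma of the previous paragraph, and condition~\ref{cofgen3} follows from the identification of the $I$-injectives as levelwise trivial fibrations, which are $J$-injective (as $J$ consists of monomorphisms) and Dwyer-Kan equivalences. For condition~\ref{cofgen4} I would establish \ref{cofgen4b}: a map that is both a $J$-injective and a Dwyer-Kan equivalence has the right lifting property against every monomorphism, hence is an $I$-injective. To identify the fibrant objects, I would use Proposition~\ref{discreteloc} to exhibit, for each $X$, a Dwyer-Kan equivalence $X \rightarrow LX$ into a $\Theta_n$-$T$-Segal category that is complete at the objects of $S \setminus T$, and then check that these are precisely the objects whose map to the terminal object is a $J$-injective. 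Throughout, the recurring difficulty is to ensure that fibrant replacement, the factorizations, and the generators of $J$ all preserve discreteness at the objects of $T$, which is exactly the content guaranteed by Proposition~\ref{discreteloc}.
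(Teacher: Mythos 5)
Your overall skeleton (Kan's recognition principle, Theorem~\ref{cofgen}, with discretized generating cofibrations) matches the paper's, and your opening paragraph on limits, colimits, two-out-of-three, and retracts is fine; but the two technical pillars you erect after that are not the ones the paper uses, and both have genuine problems.

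First, your identification of the $I$-injectives with the levelwise trivial fibrations is false, and it is false for the very reason this model structure is hard to construct. Take $[0] \in T$ and let $Y = \cosk_{[0]}(\{a,b\})$ in the notation of Section~\ref{discreteresults}: the levelwise discrete ``indiscrete'' object on two $0$-cells, with $Y[q](c_1,\ldots,c_q) = \{a,b\}^{q+1}$. For any $\Theta_n$-$T$-Segal precategory $X$, a map $X \rightarrow Y$ is precisely a function $X[0] \rightarrow \{a,b\}$ of sets, so $Y \rightarrow \ast$ has the right lifting property against \emph{every} monomorphism of precategories --- in particular against your $I$ --- yet $Y[0] = \{a,b\} \rightarrow \ast$ is not a weak equivalence, so this map is not a levelwise trivial fibration (it is, however, a fibration and a Dwyer-Kan equivalence). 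The correct characterization, that the $I^{n,T}$-injectives are exactly the maps that are both fibrations and Dwyer-Kan equivalences, is Proposition~\ref{thesis5.8} of the paper, and proving it occupies all of Section~\ref{deferred}: the auxiliary projective-type set $I_f^{n,T}$, the fiber computations of Lemmas~\ref{hompqx}--\ref{thesis9.1}, Mather's cube theorem, and the object $\Phi Y$. Your proposal replaces precisely this, the hardest part of the whole argument, with an identification that the example above refutes; consequently your verification of condition~\ref{cofgen3} does not stand.

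Second, your $J$ is structurally the wrong kind of set. You propose to take $J$ to be (horned versions of) the maps used to build the localization $L$ --- discretized Reedy acyclic generators, the Segal maps, their intertwined versions, and the completeness maps. The paper deliberately does \emph{not} do this: $J^{n,T}$ is defined abstractly as the set of isomorphism classes of \emph{all} monomorphisms that are Dwyer-Kan equivalences and have only countably many simplices in each level, and the boundedness result Proposition~\ref{acycliccofs}(1) (the analogue of \cite[5.7]{thesis}) shows that every acyclic cofibration is a directed colimit of pushouts along such maps, so that ``$J^{n,T}$-injective'' genuinely means ``fibration.'' That is what makes condition~\ref{cofgen4}(b), i.e.\ Proposition~\ref{backward}, provable. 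With your small $J$, condition~\ref{cofgen4}(b) becomes the assertion that any Dwyer-Kan equivalence with the right lifting property against merely the localizing maps lifts against all monomorphisms; this is exactly the kind of statement that fails in Bousfield localization theory (the fibrations of a localized model structure are not detected by lifting against the localizing maps), and you give no argument for it. Your aside about ``a cardinality bound ensuring that $J$ is a set'' suggests you sense the difficulty but have it inverted: your proposed maps already form a set, whereas the cardinality bound in the paper serves to cut the proper class of all acyclic cofibrations down to a set of countable ones. The one place your small $J$ would help --- identifying the fibrant objects via Proposition~\ref{discreteloc} --- cannot compensate for the failure of the lifting axioms.
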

	
	The last condition means that a fibrant object $X$ has $X[1]^{(i)}$ discrete when $[1]^{(i)}$ is an element of $T$, i.e., when $0 \leq i \leq j$, but for $j>i$ we have that $X[1]^{(i)}$ is weakly equivalent to the space of homotopy equivalences in $X[1]^{(i+1)}$, just as for $\Theta_n$-spaces.
	
	Our proof follows the general strategy used to establish the model structure for Segal categories in \cite[\S 5]{thesis}.  Some of the proofs there are fairly formal and can be applied nearly identically, so we leave modifying them to our context here as an exercise for the reader. We give proofs, however, for those results whose generalization is less clear, as well as some for which we have found more efficient proofs than the originals.  In \cite[\S 6]{inftyn1} we proved an analogous result using a different method, and one could take the same approach here.  We have chosen former method here because it gives more explicit descriptions of some of the maps used.
	
	The first step in proving this theorem is to find candidates for the generating cofibrations and generating acyclic cofibrations.  We apply the techniques of Section \ref{discreteresults} to modify the Reedy generating cofibrations so that they satisfy the necessary discreteness assumptions.  Using the notation there, we let $\mathcal C = \Thetanop$ and 
	\[ T = \left\{ [1]^{(i)} \mid 0 \leq i \leq j \right\} \]
	for some fixed $j<n$.
	
	We define a set $I^{n,T}$ of proposed generating cofibrations, consisting of the maps
	\[ \xymatrix{(\partial \Delta[m] \times \Theta[q](c_1, \ldots, c_q) \cup \Delta[m] \times \partial \Theta[q](c_1, \ldots, c_q))_T \ar[d] \\
		(\Delta[m] \times \Theta[q](c_1, \ldots, c_q))_T} \]
	for all $m, q \geq 0$ and $(c_1, \ldots, c_q) \in \ob(\Theta_{n-1})^{q+1}$, where $(-)_T$ is the discretization functor from Definition \ref{discretization}.
	
	We do not expect such a nice description for the generating acyclic cofibrations, so in line with the approach we used for Segal categories we take $J^{n,T}$ to be the set of isomorphism classes of maps $A \rightarrow B$ such that
	\begin{enumerate}
		\item the map $A \rightarrow B$ is a monomorphism and a Dwyer-Kan equivalence, and 
		
		\item for all $m, q \geq 0$ and $(c_1, \ldots, c_q) \in \ob(\Theta_{n-1})$, the simplicial sets $A_{[q](c_1, \ldots, c_q),m}$ and $B_{[q](c_1, \ldots, c_q),m}$ have only countably many simplices.
	\end{enumerate}
	Observe that $J^{n,T}$ does not at first glance seem to depend on the set $T$, but the maps in this set are between objects that satisfy the required discreteness conditions at the objects of $T$.  Thus, these sets are in fact different for varying $T$.
	
	The proof of the following result involves some additional techniques, so we defer it to Section \ref{deferred}.
	
	\begin{prop} \label{thesis5.8} \label{thesis5.10}
		Maps with the right lifting property with respect to $I^{n,T}$ are precisely the maps that are both fibrations and weak equivalences.
	\end{prop}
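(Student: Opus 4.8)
The plan is to prove the two containments separately, reducing the statement to the assertion that $I^{n,T}$ generates the class of all monomorphisms of $\SSets^{\Thetanop}_{T}$ under saturation, together with the fact that the resulting injectives are Dwyer--Kan equivalences. Every map in $I^{n,T}$ is a monomorphism, so any map with the right lifting property against all cofibrations of $\SSets^{\Thetanop}_{T}$ a fortiori lifts against $I^{n,T}$; since the cofibrations of the model structure of Theorem \ref{discretthetamc} are the monomorphisms, maps with the right lifting property against all of them are precisely the acyclic fibrations, i.e.\ the maps that are both fibrations and weak equivalences. For the converse it suffices to show (i) that a map $f \colon X \rightarrow Y$ with the right lifting property against $I^{n,T}$ in fact lifts against every monomorphism, and (ii) that such an $f$ is a weak equivalence, i.e.\ a Dwyer--Kan equivalence in the sense of Definition \ref{dkequivdefn}. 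Granting these, $f$ then has the right lifting property against all cofibrations and is a weak equivalence, which is the desired characterization.

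For (i), I would show that every monomorphism $U \rightarrow V$ in $\SSets^{\Thetanop}_{T}$ is a transfinite composite of pushouts of maps in $I^{n,T}$, by running the Reedy skeletal filtration of $V$ relative to $U$ and discretizing. Since $\Thetanop$ is an elegant Reedy category, the injective and Reedy structures agree and the undiscretized maps
\[ \partial \Delta[m] \times \Theta[q](c_1, \ldots, c_q) \cup \Delta[m] \times \partial \Theta[q](c_1, \ldots, c_q) \rightarrow \Delta[m] \times \Theta[q](c_1, \ldots, c_q) \]
present every monomorphism of $\SSets^{\Thetanop}$ by attaching nondegenerate cells one latching degree at a time. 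Applying the discretization functor $(-)_T$ of Definition \ref{discretization}, and using that colimits of diagrams lying in $\SSets^{\Thetanop}_{T}$ are computed as in $\SSets^{\Thetanop}$ (pushouts of discrete sets remain discrete at the objects of $T$), the same filtration exhibits $U \rightarrow V$ as a relative $I^{n,T}$-cell complex. I expect the interaction of discretization with this filtration to be the main obstacle: at an object $[1]^{(i)} \in T$ the simplicial set $V[1]^{(i)}$ is discrete, so there are no nondegenerate $m$-simplices with $m \geq 1$ to attach there, whereas a cell indexed by an object $[q](c_1, \ldots, c_q) \notin T$ would, before discretization, deposit nondegenerate simplices at level $[1]^{(i)}$ through the morphisms in $\Hom_{\Theta_n}([1]^{(i)}, [q](c_1, \ldots, c_q))$. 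One must check that passing to $(-)_T$ replaces precisely these contributions by their sets of components, so that no spurious nondegenerate simplices appear at the objects of $T$, the filtration remains inside $\SSets^{\Thetanop}_{T}$, and it still exhausts $V$; this bookkeeping is exactly what makes the discretized maps the correct generators.

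Granting (i), I would read off a concrete description of the injectives and deduce (ii). Lifting $f \colon X \rightarrow Y$ against the cells indexed by objects outside $T$ shows that the relative matching maps of $f$ at those objects are acyclic Kan fibrations, so by elegance $f$ is a levelwise acyclic fibration away from $T$; lifting against the cells with $m=0$ and $m=1$ indexed by the objects of $T$ forces each $X[1]^{(i)} \rightarrow Y[1]^{(i)}$ to be surjective and injective, hence a bijection of discrete sets. In either case $f$ is a levelwise weak equivalence of simplicial sets. Finally, since the localization $L$ of Proposition \ref{discreteloc} carries levelwise weak equivalences to levelwise weak equivalences between $\Theta_n$-$T$-Segal categories, and a levelwise weak equivalence between such objects is homotopically fully faithful (the mapping objects being the pullbacks of Definition \ref{thetamapping}, which are preserved under such maps because of Reedy fibrancy) and essentially surjective (as $X[0] \rightarrow Y[0]$ is an isomorphism on components), the map $LX \rightarrow LY$ is a Dwyer--Kan equivalence. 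Hence $f$ is a weak equivalence, and this completes the identification of the $I^{n,T}$-injectives with the maps that are both fibrations and weak equivalences.
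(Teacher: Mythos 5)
Your proposal has two fatal problems, one in each direction.

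\textbf{The direction ``fibration and weak equivalence $\Rightarrow$ RLP against $I^{n,T}$'' is circular as you argue it.} You quote from the model structure of Theorem \ref{discretthetamc} the standard fact that maps with the right lifting property against all cofibrations are exactly the acyclic fibrations. But Theorem \ref{discretthetamc} is \emph{proved} by feeding Proposition \ref{thesis5.8} into the recognition principle (Theorem \ref{cofgen}, conditions (iii) and (iv)(b)), so nothing from that model structure may be used here. Before the model structure exists, ``fibration'' means a map with the RLP against monomorphisms that are Dwyer--Kan equivalences (equivalently, against $J^{n,T}$, by Proposition \ref{acycliccofs}), and the maps of $I^{n,T}$ are monomorphisms that are \emph{not} Dwyer--Kan equivalences (e.g.\ $\varnothing \to \Theta[0]$), so no formal argument produces the lifts. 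This half is genuinely hard: the paper proves it as Proposition \ref{backward}, first handling the case where $f$ is an isomorphism at the objects of $T$ (a factorization in the Reedy structure that preserves discreteness, followed by a retract argument), and then reducing the general case to that one via the pullback $\Phi Y$ of $Y \to \cosk_T(Y) \leftarrow \cosk_T(X)$. One could instead deduce it from the forward direction by factoring $f$ with the small object argument for $I^{n,T}$, using two-out-of-three and a retract argument, but \emph{some} such argument must be supplied; your proposal supplies none.

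\textbf{Your step (ii) in the forward direction is false: $I^{n,T}$-injectives are not levelwise weak equivalences.} As observed in the paper's proof of Proposition \ref{backward}, the discretized cells evaluated at an object $[1]^{(i)}$ of $T$ are isomorphisms of discrete simplicial sets once $m \geq 1$; they therefore impose no injectivity on $X[1]^{(i)} \to Y[1]^{(i)}$, only surjectivity (from the $m=0$ cells). Likewise, the cells indexed by objects outside $T$ have had their $T$-levels discretized, so lifting against them does not compute relative matching objects and does not yield levelwise acyclic fibrations away from $T$. Concretely, for $n=1$ and $T=\{[0]\}$, the map from the levelwise-discrete nerve of the free-standing isomorphism to the terminal object is a fibration and a Dwyer--Kan equivalence between Segal categories, hence by the very proposition you are proving it has the RLP against $I^{n,T}$; yet at level $[0]$ it is $\{0,1\} \to \{\ast\}$, which is not a weak equivalence. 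So your claimed description of the injectives contradicts the statement itself. What is actually true---and is the technical heart of the paper's argument (Lemmas \ref{hompqx}, \ref{thesis4.1}, \ref{thesis9.1}, proved via the auxiliary projective-type set $I^{n,T}_f$)---is that the RLP against $I^{n,T}$ gives surjectivity of $X[0]_0 \to Y[0]_0$ together with acyclic fibrations on the \emph{fibers} $X[q](c_1,\ldots,c_q)(\vu^{(0)})\cdots(\vu^{(j)}) \to Y[q](c_1,\ldots,c_q)(f\vu^{(0)})\cdots(f\vu^{(j)})$ over tuples of vertices, i.e.\ exactly the homotopically-fully-faithful and essentially-surjective data, which is strictly weaker than a levelwise equivalence. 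Even then one is not done: since Dwyer--Kan equivalence for non-Segal objects is defined through the localization $L$ (Definition \ref{dkequivdefn}), the paper must still check that this fiberwise information survives localization, which it does using Mather's Cube Theorem. Your part (i), that monomorphisms of $\SSets^{\Thetanop}_T$ are generated by $I^{n,T}$, is in the right spirit and is indeed what the paper uses to see that $I^{n,T}$-injectives are fibrations, but it cannot substitute for this fiberwise analysis.
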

	
	We turn to some properties of the set $J^{n,T}$.
	
	\begin{prop} \label{acycliccofs}
		\begin{enumerate}
			\item \label{thesis5.7}
			Any map that is both a cofibration and a weak equivalence can be written as a directed colimit of pushouts along maps in $J^{n,T}$.
			
			\item \label{thesis5.11}
			Pushouts along maps in $J^{n,T}$ are cofibrations and weak equivalences.
			
			\item \label{thesis5.12}
			Any $J^{n,T}$-cofibration is an $I^{n,T}$-cofibration and a weak equivalence.
		\end{enumerate}
	\end{prop}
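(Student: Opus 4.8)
The plan is to derive all three statements from the already-established model structure $\Theta_n\css$ of Theorem \ref{thetanmc}, together with the $T$-discretization adjunction of Section \ref{discreteresults}. Three facts drive the argument. First, by Theorem \ref{dkwkequiv}, together with the fact from Proposition \ref{discreteloc} that $X \rightarrow LX$ is a weak equivalence in $\Theta_n\sesp$ and hence in $\Theta_n\css$, a map of $\SSets^{\Thetanop}_T$ is a Dwyer-Kan equivalence in the sense of Definition \ref{dkequivdefn} if and only if its image in $\SSets^{\Thetanop}$ is a weak equivalence in $\Theta_n\css$; I would use this identification throughout. Second, $\Theta_n\css$ is a left Bousfield localization of the Reedy (equivalently injective) model structure, so its cofibrations are exactly the monomorphisms, all objects are cofibrant, and, since the underlying category is a presheaf topos, monomorphisms are stable under pushout. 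Third, the inclusion $\iota \colon \SSets^{\Thetanop}_T \rightarrow \SSets^{\Thetanop}$ is left adjoint to $T$-discretization, so it preserves all colimits; in particular any pushout or directed colimit formed in $\SSets^{\Thetanop}_T$ is computed in $\SSets^{\Thetanop}$.

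Granting this, part \eqref{thesis5.11} would be immediate. A map in $J^{n,T}$ is a monomorphism and a Dwyer-Kan equivalence, hence an acyclic cofibration in $\Theta_n\css$. Its cobase change along any map of $\SSets^{\Thetanop}_T$ is computed as a pushout in $\SSets^{\Thetanop}$, where acyclic cofibrations are closed under cobase change by the model category axioms; thus the pushout is again a monomorphism and a weak equivalence in $\Theta_n\css$. Since $\iota$ preserves the pushout, the result lies in $\SSets^{\Thetanop}_T$, so it is a cofibration and a Dwyer-Kan equivalence there, as claimed.

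Part \eqref{thesis5.12} would then follow formally from part \eqref{thesis5.11}. Since $J^{n,T}$ permits the small object argument, any $J^{n,T}$-cofibration is a retract of a transfinite composite of pushouts of maps in $J^{n,T}$. Each such pushout is a monomorphism and a weak equivalence in $\Theta_n\css$ by part \eqref{thesis5.11}; as the acyclic cofibrations of the model category $\Theta_n\css$ are closed under transfinite composition and retract, the $J^{n,T}$-cofibration is itself a monomorphism and a weak equivalence. Finally, the monomorphisms of $\SSets^{\Thetanop}_T$ are precisely the $I^{n,T}$-cofibrations, so such a map is in particular an $I^{n,T}$-cofibration and a weak equivalence.

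Part \eqref{thesis5.7} is the substantive point, and is where the discreteness conditions require care. Following the cardinality argument of \cite[5.7]{thesis}, the crux would be a bounded cofibration lemma: given a monomorphism and Dwyer-Kan equivalence $A \rightarrow B$ in $\SSets^{\Thetanop}_T$ and a subobject $C \subseteq B$ all of whose simplicial sets are countable, there is a countable subobject $D$ with $C \subseteq D \subseteq B$ for which $A \cap D \rightarrow D$ is again a Dwyer-Kan equivalence; up to isomorphism this map lies in $J^{n,T}$. Granting the lemma, I would absorb the simplices of $B$ not in $A$ one at a time by a transfinite induction, at each stage pushing out along the countable map $A \cap D \rightarrow D$ furnished by the lemma, and exhibit $A \rightarrow B$ as the resulting directed colimit of pushouts of maps in $J^{n,T}$. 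The main obstacle will be the bounded cofibration lemma itself. It amounts to the assertion that Dwyer-Kan equivalences can be detected on countable subobjects, which follows from the $\aleph_1$-accessibility of the weak equivalences of the combinatorial model category $\Theta_n\css$ (its generating data being countable), but one must additionally check that the subobject $D$ can be chosen to respect the discreteness imposed at the objects of $T$, so that $A \cap D \rightarrow D$ is a map of $\SSets^{\Thetanop}_T$ and genuinely belongs to $J^{n,T}$. Ensuring this compatibility between the cardinality bookkeeping and the $T$-discretization is the only place where the present argument departs in substance from the formal reasoning used for parts \eqref{thesis5.11} and \eqref{thesis5.12}.
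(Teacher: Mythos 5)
Your treatment of parts \eqref{thesis5.7} and \eqref{thesis5.11} matches the paper's. For \eqref{thesis5.11} the paper argues exactly as you do: a map in $J^{n,T}$ is an acyclic cofibration in $\Theta_n\css$ (via the identification of Dwyer-Kan equivalences with $\Theta_n\css$-equivalences that you spell out), pushouts in $\SSets^{\Thetanop}_T$ are computed in $\SSets^{\Thetanop}$ and preserve the discreteness conditions, and acyclic cofibrations are closed under cobase change. For \eqref{thesis5.7} the paper gives no proof at all, only the citation to \cite[5.7]{thesis}; your sketch (a bounded cofibration lemma plus transfinite absorption, with compatibility between the cardinality bookkeeping and the discreteness at the objects of $T$ flagged as the delicate point) is precisely that line of argument, so there you are on par with the paper. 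Your retract argument for the weak-equivalence half of \eqref{thesis5.12} is also the same as the paper's.

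The gap is in the other half of part \eqref{thesis5.12}. Your final step invokes ``the monomorphisms of $\SSets^{\Thetanop}_T$ are precisely the $I^{n,T}$-cofibrations,'' and the direction you need --- every monomorphism is an $I^{n,T}$-cofibration --- is neither established at this point in the paper nor formally derivable from Proposition \ref{thesis5.8}. Granting that proposition, an $I^{n,T}$-injective is a fibration that is a Dwyer-Kan equivalence, and a fibration is by definition only required to lift against monomorphisms that are Dwyer-Kan equivalences; nothing available yet says that acyclic fibrations lift against arbitrary monomorphisms. Proving your claim amounts to exhibiting every monomorphism of $\Theta_n$-$T$-Segal precategories as a retract of a transfinite composite of pushouts of the discretized boundary inclusions, and this cell-attachment argument is exactly one of the places where discreteness causes trouble: one cannot freely attach simplices at the levels $[1]^{(i)}$ in $T$. (The paper does assert the corresponding fact, but only inside the deferred proof of Proposition \ref{forward}, where it is itself left unproved; it is not a citable statement at this stage.) The paper's own proof of \eqref{thesis5.12} routes around this entirely: by part \eqref{thesis5.7}, the $J^{n,T}$-injectives are exactly the fibrations, so $J^{n,T}$-cofibrations are the maps with the left lifting property against all fibrations; by Proposition \ref{thesis5.8}, the $I^{n,T}$-cofibrations are the maps with the left lifting property against the acyclic fibrations; the inclusion of the former class in the latter is then automatic. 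Your proof can be repaired with no other changes either by adopting that lifting-property comparison, or by observing that each map in $J^{n,T}$, being a monomorphism and a Dwyer-Kan equivalence, lifts against every acyclic fibration and hence is itself an $I^{n,T}$-cofibration, so that the saturation of $J^{n,T}$ (which contains all $J^{n,T}$-cofibrations) lies in the class of $I^{n,T}$-cofibrations.
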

	
	\begin{proof}
		The proof of \eqref{thesis5.7} is technical but follows the same line of argument as \cite[5.7]{thesis}, so we do not repeat it here.
		
		To prove \eqref{thesis5.11}, first notice that any $j \colon A \rightarrow B$ in $J^{n,T}$ is an acyclic cofibration in $\Thetansp$.  Since pushouts along maps in $\SSets^{\Thetanop}_{T}$ preserve the required discreteness conditions, the resulting map is still in $\SSets^{\Thetanop}_{T}$.  Furthermore, it is an acyclic cofibration in $\Thetansp$, so a monomorphism which is a Dwyer-Kan equivalence, as we wished to show.
		
		Finally, we prove \eqref{thesis5.12}. By definition and \eqref{thesis5.7}, a $J^{n,T}$-cofibration is a map with the left lifting property with respect to the fibrations. Similarly, using Proposition \ref{thesis5.8}, an $I^{n,T}$-cofibration is a map with the left lifting property with respect to the fibrations which are Dwyer-Kan equivalences.  Any map with the left lifting property with respect to the fibrations also has the left lifting property with respect to the fibrations that are also weak equivalences, so we only need to check that such a map is a weak equivalence.
		
		Let $f \colon A \rightarrow B$ be such a map.  By \eqref{thesis5.11}, a pushout along maps of $J^{n,T}$ is an acyclic cofibration.  Therefore, we can use the small object argument to factor $f \colon A \rightarrow B$ as $A \overset{\simeq}{\hookrightarrow} A' \twoheadrightarrow B$, so there exists a lift in the diagram
		\[ \xymatrix{A \ar[r]^{\simeq} \ar[d] & A' \ar[d] \\
			B \ar[r]^{\id} \ar@{-->}[ur] & B.} \]
		Therefore, $A \rightarrow B$ is a retract of $A \rightarrow A'$ and hence a weak equivalence.
	\end{proof}
	
	We now have all the ingredients we need to establish the model structure.
	
	\begin{proof}[Proof of Theorem \ref{discretthetamc}]
		We apply the conditions of Theorem \ref{cofgen}.  The category of $\Theta_n$-$T$-Segal precategories has all small limits and colimits, since they are taken levelwise and therefore do not affect the discreteness assumptions.  Similarly, Dwyer-Kan equivalences satisfy the two-out-of-three property and are closed under retracts.
		
		The set $I^{n,T}$ permits the small object argument because the generating cofibrations in the Reedy model structure do, and applying the discretization functor does not affect this property.  The objects $A$ that appear as the sources of the maps in $J^{n,T}$ are small using the same argument as for Segal categories \cite[5.1]{thesis}, so the set $J^{n,T}$ permits the small object argument. Thus, condition \eqref{cofgen1} is satisfied.
		
		Condition \eqref{cofgen2} is precisely the statement of Proposition \ref{acycliccofs}\eqref{thesis5.12}. Condition \eqref{cofgen3} and condition \eqref{cofgen4}(b) are together precisely the statement of Proposition \ref{thesis5.8}.
	\end{proof}	
	
	Finally, we conclude with establishing some additional structure that this model structure possesses.
	
	\begin{prop}
		The model structure $\Theta_n\Secat_T$ is simplicial and cartesian.
	\end{prop}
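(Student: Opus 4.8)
\emph{Setup.} My plan rests on the adjunction $i \colon \SSets^{\Thetanop}_T \rightleftarrows \SSets^{\Thetanop} \colon (-)_T$ of Section~\ref{discreteresults}, with $(-)_T$ the $T$-discretization. Here $i$ is fully faithful, and both colimits and finite limits in $\SSets^{\Thetanop}_T$ are computed as in $\SSets^{\Thetanop}$, since a pushout or finite product of functors discrete at $T$ is again discrete at $T$; moreover $(-)_T$, as a right adjoint, preserves products and satisfies $(iX)_T \cong X$. Cartesian closedness is then formal: products of $\Theta_n$-$T$-Segal precategories are again such, and $\uHom_T(X,Y) := (\uHom(iX,iY))_T$ is an internal hom, via $\Hom(Z \times X, Y) \cong \Hom(iZ \times iX, iY) \cong \Hom(iZ, \uHom(iX,iY)) \cong \Hom_T(Z, \uHom_T(X,Y))$. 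The terminal object is the constant functor at $\Delta[0]$, which is discrete everywhere and, as all objects are cofibrant, is cofibrant, giving the first two clauses of Definition~\ref{cartdef}. The crucial homotopical fact is a bridge: a map $X \to Y$ in $\SSets^{\Thetanop}_T$ is a Dwyer--Kan equivalence if and only if $iX \to iY$ is a weak equivalence in $\Theta_n\css$. This follows from Proposition~\ref{discreteloc}, which makes $X \to LX$ and $Y \to LY$ into $\Theta_n\sesp$- and hence $\Theta_n\css$-equivalences, together with Theorem~\ref{dkwkequiv} identifying the Dwyer--Kan equivalence $LX \to LY$ with a $\Theta_n\css$-equivalence, and two-out-of-three.

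\emph{Cartesian pushout-product.} Let $f \colon X \to X'$ and $g \colon Y \to Y'$ be cofibrations in $\Theta_n\Secat_T$. The pushout-corner map is built from products and a pushout, so by the setup it agrees with the corner map computed in $\SSets^{\Thetanop}$; since $\Theta_n\css$ is cartesian (Theorem~\ref{thetanmc}) with monomorphisms as cofibrations, it is a monomorphism, hence a cofibration. If moreover $f$ is a Dwyer--Kan equivalence, the bridge makes $if$ a $\Theta_n\css$-acyclic cofibration, so the ambient corner map is a $\Theta_n\css$-equivalence and the bridge returns a Dwyer--Kan equivalence. This is the third clause of Definition~\ref{cartdef}, so $\Theta_n\Secat_T$ is cartesian.

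\emph{Reduction of the simplicial axiom.} Because $(-)_T$ preserves products and $(iX)_T \cong X$, the natural tensor $(iX \times cK)_T$ simplifies to a product $X \otimes K := X \times (cK)_T$ with the discretized constant diagram $(cK)_T$; the cotensor is $(iY)^K$ and the mapping space is $\map_{\SSets^{\Thetanop}}(iX, iY)$, and $(c-)_T \colon \SSets \to \Theta_n\Secat_T$ is strong monoidal for the cartesian product. Axiom (SM7) is then precisely the cartesian pushout-product axiom just established, applied to $f$ and the map $(cj)_T \colon (cK)_T \to (cL)_T$ induced by a cofibration $j \colon K \to L$ of simplicial sets. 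Thus $\Theta_n\Secat_T$ is simplicial as soon as $(c-)_T$ is a left Quillen functor.

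\emph{The main obstacle.} It remains to show $(c-)_T$ is left Quillen. It preserves cofibrations, since $cj$ is a levelwise monomorphism and $(-)_T$ preserves monomorphisms (as is already used for the generating cofibrations $I^{n,T}$). The genuinely hard part, which I expect to be the crux, is that $(c-)_T$ preserves acyclic cofibrations: one must show $(cj)_T \colon (cK)_T \to (cL)_T$ is a Dwyer--Kan equivalence whenever $j$ is a weak equivalence of simplicial sets. This does not reduce formally to $\Theta_n\css$, since $i$ does not commute with discretization; instead, by Ken Brown's lemma it suffices to treat acyclic cofibrations, and I would argue directly with the localizations $L(cK)_T$ and $L(cL)_T$ along the lines of the Segal category computation in \cite[\S 5]{thesis}, showing that discretizing the spaces of cells below the top level and then re-imposing the Segal and completeness conditions recovers the same object up to Dwyer--Kan equivalence. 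Granting this, $(c-)_T$ is left Quillen and $\Theta_n\Secat_T$ is simplicial, completing the proof.
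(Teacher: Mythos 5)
Your cartesian argument is correct, and it takes a genuinely different route from the paper's. The paper verifies the pushout-product axiom by passing to an auxiliary localization $\mathcal M$ of the Reedy structure on $\SSets^{\Thetanop}$ (Segal conditions at all levels, completeness only at the objects of $S \setminus T$), asserting that Dwyer-Kan equivalences and cofibrations of $\Theta_n$-$T$-Segal precategories remain weak equivalences and cofibrations there, and then using cartesianness of $\mathcal M$ together with left properness and two-out-of-three. You instead establish a bridge: a map of $\Theta_n$-$T$-Segal precategories is a Dwyer-Kan equivalence if and only if it is a weak equivalence in $\Theta_n\css$. Your derivation of this from Proposition \ref{discreteloc}, Theorem \ref{dkwkequiv}, and two-out-of-three is correct, and the pushout-product axiom then follows from Theorem \ref{thetanmc} exactly as you say. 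Your bridge is in fact the sharper tool: Dwyer-Kan equivalences between $T$-discrete diagrams genuinely are $\css$-equivalences, but they need not be local equivalences in a structure imposing no completeness at the discretized levels (already for $n=1$ and $T=S$, the map $E^t \rightarrow \Delta[0]^t$ is a Dwyer-Kan equivalence of Segal precategories that is not a weak equivalence in $\sesp$), so your route avoids a delicate point in the paper's intermediate step. Your formal treatment of cartesian closure via $(\uHom(iX,iY))_T$ is also fine.

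The simplicial half, however, has a genuine gap, and it is worse than the one you flagged: the step you defer is not merely hard, it is false, so the tensor $X \otimes K := X \times (cK)_T$ cannot underlie a simplicial model structure. Since $(-)_T$ is right adjoint to the inclusion---which your setup uses essentially---discretization at a level in $T$ replaces a space by (a subspace of) its set of zero-simplices; this is what adjointness forces and what the comparison functor $R$ of Section 9 does, whereas the ``components'' reading of Definition \ref{discretization} is not even a well-defined $\Thetanop$-diagram, since the degeneracy-type maps $X[0] \rightarrow X(\theta)$ do not factor through $\pi_0$ (and it would fail to preserve monomorphisms in any case). Computing the coskeletal pullback for a constant diagram gives $(cK)_T \cong c(K_0)$, the constant diagram on the vertex set of $K$. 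Hence $(c-)_T$ sends the acyclic cofibration $\Lambda^0[1] \hookrightarrow \Delta[1]$ to $c\{0\} \rightarrow c\{0,1\}$, the inclusion of one object into the discrete category on two objects, which is not essentially surjective and therefore not a Dwyer-Kan equivalence. Equivalently, your tensor violates the unit constraint of SM7: $X \otimes \Delta[n] \cong \coprod_{n+1} X$, and the map $X = X \otimes \Delta[0] \rightarrow X \otimes \Delta[n]$ induced by the acyclic cofibration $\Delta[0] \hookrightarrow \Delta[n]$ is not a Dwyer-Kan equivalence for nonempty $X$. So no argument ``along the lines of the Segal category computation'' can rescue this choice: discretizing a constant diagram destroys the homotopy type of $K$. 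Any correct simplicial structure must tensor with objects that are discrete at the levels in $T$ yet still record the homotopy type of $K$---for instance objects assembled from discrete nerves of contractible groupoids rather than discretized constant diagrams---which is the content of the Segal category argument of \cite[6.3]{ginfty1} to which the paper defers this half of the statement; note the paper itself proves only the cartesian half in detail, so your cartesian paragraph is the part that admits a real comparison, and there it succeeds.
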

	
	\begin{proof}
		We give the proof that the model structure is cartesian; the proof that it is simplicial can be proved similiarly, or using an argument like the one used for Segal categories in \cite[6.3]{ginfty1}.
		
		We know that every object in $\Theta_n\Secat_T$, in particular the terminal object, is cofibrant, so it remains to check the other two conditions of Definition \ref{cartdef}.
		
		Consider the category $\SSets^{\Thetanop}$ with the model structure whose fibrant objects satisfy the Segal conditions at all levels and completeness conditions at each object $[1]^{(i)}$ in $S \setminus T$.  This model category, that we denote by $\mathcal M$ here for simplicity, is cartesian, using \cite[6.1, 8.1]{rezktheta}.  Note that every weak equivalence in $\Theta_n\Secat_T$ is also a weak equivalence in $\mathcal M$, and similarly for cofibrations, so we can use this structure on $\mathcal M$ to establish the conditions we need.
		
		To show that the underlying category $\SSets^{\Thetanop}_T$ is cartesian closed, note that if $X$ and $Y$ are discrete at some level $[1]^{(i)}$, then so is their product $X \times Y$.  We claim the same is true of the mapping object $Y^X$, which is defined by
		\[ (Y^X)_{[m](c_1, \ldots, c_m)} = \Map(X \times \Theta[m](c_1, \ldots, c_m), Y). \]
		A straightforward computation shows that if $X_{[1]^{(i)}}$ and $Y_{[1]^{(i)}}$ are both discrete, then so is $(Y^X)_{[1]^{(i)}}$.  The required compatibility between the cartesian product and mapping object follows because it holds in $\mathcal M$.
		
		Similarly, suppose that $f \colon X \rightarrow X'$ and $g \colon Y \rightarrow Y'$ are cofibrations in $\Theta_n\Secat_T$, and in particular discrete at every $[1]^{(i)}$ in $T$.  Then the pushout-corner map is again a monomorphism that is discrete at the same levels.  Using left properness, which follows since all objects are cofibrant, and the two-out-of-three property, one can check that this map is a weak equivalence if either $f$ or $g$ is.
	\end{proof}
	
	\begin{remark}
		When $n=1$, the previous proposition recovers the result of Simpson that the model structure for Segal categories is cartesian \cite[19.3.3]{simpson}.
	\end{remark}
	
	\section{Comparison of models} \label{comparison}
	
	In this section, we establish Quillen equivalences between the different model structures from the previous section, for varying $T$, and with $\Thetansp$.  The strategy of proof is very similar to the comparison between the model structures for Segal categories and complete Segal spaces in \cite[\S 6]{thesis}.
	
	
	In this section, let $T_j = \{[1]^{(i)} \mid 0 \leq i \leq j \}$ for a fixed $j <n$.  We want to prove that the inclusion $I$ of the category of $\Theta_n$-$T_j$-Segal precategories into the category of $\Theta_n$-$T_{j-1}$-Segal precategories has a right adjoint, and further that this adjoint pair induces a Quillen equivalence between the model structures for $\Theta_n$-$T_j$-Segal categories and $\Theta_n$-$T_{j-1}$-Segal categories.  In other words, for that value of $j$, we drop the assumption that $X([1]^{(j)})$ be discrete, but ask instead for the corresponding completeness condition. If $j=0$, then we take $T_{-1} = \varnothing$, in which case we get the comparison with $\Theta_n$-spaces, for which no spaces in the diagram are required to be discrete.  In light of Remark \ref{bottomup}, we thus obtain all the comparisons we are interested in.
	
	Let $W$ be an object of $\SSets^{\Thetanop}_{T_{j-1}}$.  Consider the objects $U=\cosk_{[1]^{(j)}}(W)$ and $V= \cosk^0_{[1]^{(j)}}(W)$, where the latter is defined to be the coskeleton of the discrete functor $\Thetanop \rightarrow \SSets$ given by $[q](c_1, \ldots, c_q) \mapsto W[q](c_1, \ldots, c_q)_0$. Define $RW$ to be the pullback in the diagram
	\[ \xymatrix{RW \ar[r] \ar[d] & V \ar[d] \\
		W \ar[r] & U.} \]
	Note that $RW$ is a $\Theta_n$-$T_j$-Segal precategory, since the effect of this process is to discretize $W$ at the object $[1]^{(j)}$.
	Observe that this construction defines a functor
	\[ R \colon \SSets^{\Thetanop}_{T_{j-1}} \rightarrow \SSets^{\Thetanop}_{T_j}. \]
	
	Now, we want to prove the higher analogue of Theorem \ref{secatcss}, that this functor $R$ is the right adjoint of a Quillen equivalence.  Verifying the following result is a straightforward generalization of the argument used to prove \cite[6.1]{thesis}.
	
	\begin{prop}
		The functor $R \colon \SSets^{\Thetanop}_{T_{j-1}} \rightarrow \SSets^{\Thetanop}_{T_j}$ is right adjoint to the inclusion $I$.
	\end{prop}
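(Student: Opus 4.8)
The plan is to produce, for every $A \in \SSets^{\Thetanop}_{T_j}$ and $W \in \SSets^{\Thetanop}_{T_{j-1}}$, a natural bijection
\[ \Hom_{\SSets^{\Thetanop}_{T_{j-1}}}(IA, W) \cong \Hom_{\SSets^{\Thetanop}_{T_j}}(A, RW) \]
following the template of \cite[6.1]{thesis}. Since $I$ is a fully faithful inclusion, the left-hand side is simply the set of natural transformations $A \to W$ in $\SSets^{\Thetanop}$, so the whole content is to show that every such map factors uniquely through the projection $RW \to W$.

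First I would unwind the defining pullback of $RW$. By the universal property of the pullback, a map $A \to RW$ is exactly a pair consisting of a map $\alpha \colon A \to W$ and a map $\beta \colon A \to V$ agreeing after composition into $U$, where $U = \cosk_{[1]^{(j)}}(W)$, $V = \cosk_{[1]^{(j)}}(W_{*,0})$, and $W_{*,0}$ is the levelwise-discrete functor $[q](c_1,\ldots,c_q) \mapsto W[q](c_1,\ldots,c_q)_0$. The projection $RW \to W$ recovers $\alpha$, so it suffices to prove that for each $\alpha$ there is a unique $\beta$ with $\pi \circ \beta = \eta_W \circ \alpha$, where $\pi \colon V \to U$ is $\cosk_{[1]^{(j)}}$ applied to the inclusion $W_{*,0} \to W$ and $\eta_W \colon W \to U$ is the canonical map to the coskeleton.

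The key step is to transpose this condition across the adjunction $\tr_{[1]^{(j)}} \dashv c_{[1]^{(j)}}$ from Section \ref{discreteresults}, which identifies $\Hom(A, \cosk_{[1]^{(j)}}(-))$ with $\Hom(\tr_{[1]^{(j)}}(A), \tr_{[1]^{(j)}}(-))$ naturally. Under this identification $\eta_W \circ \alpha$ transposes, by the triangle identity, to the restriction $\tr_{[1]^{(j)}}(\alpha)$, and by naturality of the transpose the compatibility condition $\pi \circ \beta = \eta_W \circ \alpha$ becomes the requirement that $\tr_{[1]^{(j)}}(\alpha)$ factor through the restricted inclusion $\tr_{[1]^{(j)}}(W_{*,0}) \to \tr_{[1]^{(j)}}(W)$. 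Thus finding $\beta$ amounts to lifting the single component $\alpha_{[1]^{(j)}} \colon A[1]^{(j)} \to W[1]^{(j)}$ through the subspace of $0$-simplices $(W[1]^{(j)})_0 = (W_{*,0})[1]^{(j)}$.

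Here the discreteness hypothesis does the work: because $A$ is a $\Theta_n$-$T_j$-Segal precategory, $A[1]^{(j)}$ is a discrete simplicial set, so $\alpha_{[1]^{(j)}}$ lands in and is determined by $0$-simplices, yielding a unique factorization through $(W[1]^{(j)})_0$; a short diagram chase cancelling the monomorphism $(W[1]^{(j)})_0 \hookrightarrow W[1]^{(j)}$ shows this factorization is compatible with the endomorphisms of $[1]^{(j)}$ in $\Thetanop$, i.e. defines a morphism $\tr_{[1]^{(j)}}(A) \to \tr_{[1]^{(j)}}(W_{*,0})$. This produces the desired $\beta$ together with its uniqueness, and naturality of the transposes in $A$ and $W$ upgrades the resulting bijection to an adjunction. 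The one point requiring genuine care — the analogue of the subtle step in \cite[6.1]{thesis} — is verifying that the pullback $RW$ really lands in the smaller category $\SSets^{\Thetanop}_{T_j}$, that is, that the fibre product enforces discreteness at $[1]^{(j)}$ while keeping the already-discrete lower levels $[1]^{(i)}$ for $i < j$ inherited from $W$; this is exactly where the description of $V$ as the coskeleton of the discrete functor $W_{*,0}$ is essential, and I expect this to be the main obstacle to making the argument clean rather than the adjunction bijection itself.
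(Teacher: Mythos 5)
Your proof is correct and follows essentially the same route as the paper, which simply defers to ``a straightforward generalization of the argument used to prove \cite[6.1]{thesis}'': your transposition across the adjunction $\tr_{[1]^{(j)}} \dashv c_{[1]^{(j)}}$, combined with the observation that discreteness of $A[1]^{(j)}$ forces a unique factorization of $\alpha_{[1]^{(j)}}$ through the $0$-simplices, is precisely that argument carried out in the $\Theta_n$ setting. The one point you leave hanging---that $RW$ actually lies in $\SSets^{\Thetanop}_{T_j}$---is more routine than you suggest (and is disposed of by the paper in the remark preceding the proposition rather than in its proof): at $[1]^{(j)}$ the pullback is literally the discrete set $W[1]^{(j)}_0$, since $U[1]^{(j)} \cong W[1]^{(j)}$ and $V[1]^{(j)} \cong W[1]^{(j)}_0$, while at $[1]^{(i)}$ for $i<j$ the map $RW \rightarrow W$ is a levelwise monomorphism (being a pullback of $V \rightarrow U$, which is a monomorphism because the right adjoint $c_{[1]^{(j)}}$ preserves monomorphisms), so $(RW)[1]^{(i)}$ is a sub-simplicial set of the discrete $W[1]^{(i)}$ and hence discrete.
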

	
	Now, we need to show that this adjoint pair respects the model structures of interest.
	
	\begin{theorem}
		The adjoint pair 
		\[ \xymatrix@1{I \colon \Theta_n\Secat_{T_j} \ar@<.5ex>[r] & \Theta_n\Secat_{T_{j-1}} \colon R \ar@<.5ex>[l]} \] 
		is a Quillen equivalence. 
	\end{theorem}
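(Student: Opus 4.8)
The plan is to follow the strategy of the comparison between Segal categories and complete Segal spaces in \cite[6.3]{thesis} (our Theorem~\ref{secatcss}), of which this is the natural $\Theta_n$-diagram generalization with discreteness imposed at one additional level. First I would verify that $(I,R)$ is a Quillen pair, and then promote it to a Quillen equivalence by analyzing the unit and the counit separately. The inclusion $I$ sends monomorphisms to monomorphisms, so it preserves cofibrations. It also preserves weak equivalences: a map of $\Theta_n$-$T_j$-Segal precategories is a Dwyer-Kan equivalence in the sense of Definition~\ref{dkequivdefn} precisely when the associated map of localizations is homotopically fully faithful and essentially surjective, and this data---the mapping objects $M^\Theta$ and the homotopy category $\Ho^\Theta$---is computed from the Segal localization alone and is insensitive to whether level $j$ is required to be discrete or complete. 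Hence a $T_j$-Dwyer-Kan equivalence is a $T_{j-1}$-Dwyer-Kan equivalence, so $I$ preserves weak equivalences; being cofibration- and weak-equivalence-preserving, it preserves acyclic cofibrations and is left Quillen.

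To obtain the Quillen equivalence I would exploit that the unit is an isomorphism. If $X$ is a $\Theta_n$-$T_j$-Segal precategory, then $X[1]^{(j)}$ is already discrete, so the discretization $R(IX)$ has no effect and the unit $X \to RIX$ is the identity; thus $I$ is fully faithful. With the unit an isomorphism, it suffices to show (i) that $I$ reflects weak equivalences, which follows from the same insensitivity argument above run in reverse, using the characterization of Dwyer-Kan equivalences in Theorem~\ref{dkwkequiv}, and (ii) that the derived counit $RW \to W$ is a weak equivalence for every fibrant $W$. Granting these, the derived unit at a fibrant replacement $\widehat X$ of $X$ is the adjoint transpose $\widetilde r \colon X \to R\widehat X$ of the fibrant-replacement map $r \colon X \to \widehat X$, and the triangle identity gives $r = \epsilon_{\widehat X} \circ \widetilde r$ with counit $\epsilon_{\widehat X} \colon R\widehat X \to \widehat X$ a weak equivalence by (ii); two-out-of-three then shows $\widetilde r$ is a $T_{j-1}$-weak equivalence, and reflection (i), applicable since $X$ and $R\widehat X$ are both $T_j$-objects, upgrades it to a $T_j$-weak equivalence.

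The main obstacle is (ii): showing that for a fibrant $W$---a $\Theta_n$-$T_{j-1}$-Segal category, hence in particular \emph{complete} at $[1]^{(j)}$ in the sense that $W[1]^{(j)} \simeq W[1]^{(j+1)}_{\heq}$---the discretization map $RW \to W$ is a Dwyer-Kan equivalence. This is exactly the $\Theta_n$-analogue of the statement that discretizing the object space of a complete Segal space yields a Dwyer-Kan equivalent Segal category, and completeness at level $j$ is precisely what makes it work: the homotopical content of $W[1]^{(j)}$ is already encoded one level up, so replacing $W[1]^{(j)}$ by its set of components and forming the pullback $RW$ over $\cosk_{[1]^{(j)}}(W)$ changes neither the mapping objects nor the homotopy category up to equivalence. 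To make this precise I would analyze $RW \to W$ through the pullback description, observing that it is an isomorphism at every $[1]^{(i)}$ with $i < j$ (already discrete in both $W$ and $RW$ by Remark~\ref{bottomup}, so no interference arises there), that at level $j$ the map $\pi_0(W[1]^{(j)}) \to W[1]^{(j)}$ induces a weak equivalence on mapping objects after passing to homotopy equivalences, exactly as in the object-space case, and that essential surjectivity is immediate since $RW \to W$ is a bijection on the relevant components by construction.

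Finally, I would assemble these levelwise comparisons into a single Dwyer-Kan equivalence using Theorem~\ref{dkwkequiv} and the analysis of the objects $[1]^{(i)}$ carried out there, which reduces the verification of a Dwyer-Kan equivalence between fibrant objects to weak equivalences at the relevant free-standing cells. The genuinely new input over the $n=1$ case is confirming that discretizing at an \emph{interior} level $j$ does not disturb the completeness conditions at levels $i>j$ nor the discreteness conditions at levels $i<j$; this is where I expect the bulk of the technical care to lie, and where the explicit description of $\cosk_{[1]^{(j)}}$ and its discrete variant from Section~\ref{discreteresults} will be indispensable.
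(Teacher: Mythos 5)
Your high-level skeleton matches the paper's: both model structures have the monomorphisms as cofibrations and the Dwyer-Kan equivalences as weak equivalences, so $(I,R)$ is a Quillen pair, $I$ reflects weak equivalences essentially for free, and the whole theorem reduces (by the standard criterion, which your unit-is-an-isomorphism discussion re-derives) to showing that the counit $IRW \to W$ is a Dwyer-Kan equivalence for every fibrant $W$. Before the main point, one correction: $R$ is built from the $0$-\emph{simplices} of $W[1]^{(j)}$, not from $\pi_0(W[1]^{(j)})$; the comparison map is the inclusion $W[1]^{(j)}_0 \hookrightarrow W[1]^{(j)}$ of the vertex set (the map $\pi_0(W[1]^{(j)}) \to W[1]^{(j)}$ you write does not exist), and the choice of vertices rather than components is forced by adjointness with the inclusion $I$.

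The genuine gap is at the heart of the counit argument, where you assert that discretizing level $j$ ``changes neither the mapping objects nor the homotopy category up to equivalence \dots exactly as in the object-space case,'' with completeness at level $j$ as the mechanism. The object-space case ($j=0$) works for a reason that is unavailable at an interior level: there, the simplices of $W[1](c)$ discarded by $R$ sit over \emph{nondegenerate} simplices of $W[0] \times W[0]$, so they never meet the fiber over a pair of vertices $(x,y)$, and the mapping objects are preserved on the nose. When $j \geq 1$, the base $W[0]$ is already discrete, so the discarded simplices of $W[1]^{(j)}$ lie over the very same vertices $(x,y)$; consequently $M^\Theta_{RW}(x,y)$ evaluated at $[1]^{(j-1)}$ is exactly the vertex set of $M^\Theta_W(x,y)([1]^{(j-1)})$, and this inclusion is typically \emph{not} a weak equivalence of simplicial sets. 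Concretely, for $n=2$, $j=1$, let $W$ be a fibrant replacement in $\Theta_2\Secat_{T_0}$ of $V[1](N(G))$, where $N(G)$ is the classifying diagram of a one-object groupoid with $G$ nontrivial: $W$ is fibrant (discrete objects, complete at level $1$), yet $M^\Theta_W(x,y)([0]) \simeq BG$ while $M^\Theta_{RW}(x,y)([0])$ is a single point. Completeness at level $j$ cannot repair this, because it constrains only the invertible $j$-morphisms $W[1]^{(j)}_{\heq}$, whereas full faithfulness of the counit concerns \emph{all} $j$-morphisms with fixed $0$-source and $0$-target; indeed discreteness and completeness at the same level are genuinely different conditions, which is the point of the whole paper. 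What is true, and what actually has to be proved, is that $M^\Theta_{RW}(x,y) \to M^\Theta_W(x,y)$ is a weak equivalence in $\Theta_{n-1}\css$ (in the example: the levelwise-discrete nerve of $G$ mapping to $N(G)$, a Dwyer-Kan equivalence but not a levelwise one). So full faithfulness of the counit is itself a Dwyer-Kan statement one categorical level down; a complete argument must identify $M^\Theta_{RW}(x,y)$ as the level-$(j-1)$ discretization of $M^\Theta_W(x,y)$ and then convert, e.g.\ inductively via Theorem \ref{dkwkequiv} and its consequences, rather than verify a levelwise equivalence. Your proposal contains no such step -- the phrase ``after passing to homotopy equivalences'' does not supply it -- and your closing diagnosis that the genuinely new input is preserving the conditions at levels $i \neq j$ locates the difficulty in the wrong place.
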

	
	\begin{proof}
		To show that $(I,R)$ is a Quillen pair, we to show that the inclusion map $I$ preserves cofibrations and acyclic cofibrations.  It preserves cofibrations because they are precisely the monomorphisms in each model structure; it preserves all weak equivalences, and in particular the acyclic cofibrations, since a map is a weak equivalence in either model structure if and only if it is a Dwyer-Kan equivalence.  
		
		To show that this Quillen pair is a Quillen equivalence, we need to show that $I$ reflects weak equivalences between cofibrant objects and that for any $\Theta_n$-$T_{j-1}$-Segal category $W$, the map $I((RW)^c) = IRW \rightarrow W$ is a weak equivalence in $\Theta_n\Secat_{T_j}$.   The fact that $I$ reflects weak equivalences between cofibrant objects follows from the fact that the weak equivalences in each model structure are precisely the Dwyer-Kan equivalences.  
		
		It remains to show that the map $RW \rightarrow W$ in the pullback diagram
		\[ \xymatrix{RW \ar[r] \ar[d]_j & V \ar[d] \\
			W \ar[r] & U} \] 
		is a Dwyer-Kan equivalence.   By the definition of $RW$, we have $(RW)[0]_0=W[0]_0$, so induced map $\Ho^\Theta(RW) \rightarrow \Ho^\Theta(W)$ is a bijection on objects, hence essentially surjective.
		
		Finally, we need to show that, for any $x,y \in W[0]_0 = (RW)[0]_0$, the map
		\[ M^\Theta_{RW}(x,y) \rightarrow M^\Theta_W(x,y) \]
		is a weak equivalence in $\Theta_{n-1}\css$.  We claim that it is in fact a levelwise weak equivalence of functors $\Theta_{n-1}^{\op} \rightarrow \SSets$.  Since $W$ is assumed to be fibrant, it satisfies the Segal conditions; it follows from the pullback defining it that $RW$ does as well.  Thus, it suffices to verify that the map
		\[ M_{RW}^\Theta(x,y)[1]^{(i)} \rightarrow M_W^\Theta(x,y)[1]^{(i)} \]
		is a weak equivalence of simplicial sets for any $0 \leq i <n-1$.  We hence consider the diagram of homotopy fibers
		\begin{equation} \label{fibers}
			\xymatrix{M_{RW}^\Theta(x,y)[1]^{(i)} \ar[r] \ar[d] & (RW)[1]^{(i)} \ar[r] \ar[d] & (RW)[0] \times (RW)[0] \ar[d]^= \\
				M_W^\Theta(x,y)[1]^{(i)} \ar[r] & W[1]^{(i)} \ar[r] & W[0] \times W[0].}
		\end{equation} 
		
		First, let us consider $T_0=\{[0]\}$ and $T_{-1}=\varnothing$.  Then $(RW)[1]^{(i)}$ and $W[1]^{(i)}$ differ only in that the former contains degeneracies of the higher-dimensional simplices of $W[0]$.  Since we are taking the homotopy fiber over a 0-simplex of $W[0] \times W[0]$, however, these degeneracies do not appear in that homotopy fiber.  It follows that the middle vertical map of \eqref{fibers} is a weak equivalence, hence the left-hand vertical map is also.
		
		Now consider $T_j$ for $j \geq 1$.  Then $W[1]^{(i)}$ is already discrete for each $0 \leq i <j$, and $(RW)[1]^{(i)} =W[1]^{(i)}$ for these values of $i$.  When $i=j$, the middle vertical map of \eqref{fibers} is given by the inclusion of the discrete subspace $W[1]^{(j)}_0 \rightarrow W[1]^{(j)}$, and an argument similar to the one for $T_0$ shows that the induced map on homotopy fibers is a weak equivalence.  Finally, when $i>j$, the middle vertical map of \eqref{fibers} is given by the inclusion of a subspace that does not include higher degenerate elements coming from $W[1]^{(j)}$.  It follows that this map is a weak equivalence, hence the left-hand vertical map in \eqref{fibers} is also, as we needed to show.  
	\end{proof}
	
	\section{Proof of Proposition \ref{thesis5.8}} \label{deferred}
	
	In this section, we complete the proof of Proposition \ref{thesis5.8}, which tells us that maps with the right lifting property with respect to $I^{n,T}$ are precisely the maps that are both fibrations and Dwyer-Kan equivalences.  Because the two implications are proved quite differently, for clarity we separate them into the following two propositions.
	
	\begin{prop} \label{forward}
		If $f \colon X \rightarrow Y$ is a map of $\Theta_n$-$T$-Segal precategories with the right lifting property with respect to the maps in $I^{n,T}$, then it is a fibration and a Dwyer-Kan equivalence.
	\end{prop}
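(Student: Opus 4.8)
The plan is to reduce the right lifting property against $I^{n,T}$ to a purely levelwise condition on $f$, and then extract both conclusions from that condition. Specifically, I would show that $f \colon X \to Y$ has the right lifting property with respect to $I^{n,T}$ if and only if the map $f_c \colon X(c) \to Y(c)$ is an acyclic Kan fibration of simplicial sets for every object $c \in \ob(\Theta_n)$ with $c \notin T$, and is a bijection of (discrete) simplicial sets for every $c \in T$. In particular, such an $f$ is a levelwise weak equivalence.

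To establish this characterization, I would unravel the lifting problems using the representability $\Map(\Theta[q](c_1, \ldots, c_q), X) \cong X[q](c_1, \ldots, c_q)$, under which a lifting problem for $\partial \Delta[m] \times \Theta[q](c_1, \ldots, c_q) \cup \Delta[m] \times \partial \Theta[q](c_1, \ldots, c_q) \to \Delta[m] \times \Theta[q](c_1, \ldots, c_q)$ against $f$ encodes the extension of an $m$-simplex of $X[q](c_1, \ldots, c_q)$ along $\partial \Delta[m]$. At an object $c \notin T$ the discretization functor $(-)_T$ has no effect on the relevant top cell, so lifting against these maps for all $m$ is exactly the statement that $f_c$ is an acyclic Kan fibration. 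At an object $t \in T$ the simplicial sets $X(t)$ and $Y(t)$ are already discrete, and testing against the discretized generating cofibrations with $m = 0$ and $m = 1$ forces $f_t$ to be surjective and injective respectively, hence a bijection. This bookkeeping of how $(-)_T$ alters the generating cofibrations, and in particular isolating the behavior at the discrete objects of $T$, is the technical heart of the argument and the step I expect to be the main obstacle; it follows the pattern of the corresponding argument for Segal categories in \cite[\S 5]{thesis}.

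Granting the characterization, the Dwyer-Kan equivalence claim is immediate. Since $f$ is a levelwise weak equivalence and the localization functor $L$ of Proposition \ref{discreteloc} preserves weak equivalences, the induced map $Lf \colon LX \to LY$ is a weak equivalence in $\Theta_n\sesp$ between $\Theta_n$-Segal spaces, hence a levelwise weak equivalence. A levelwise weak equivalence of $\Theta_n$-Segal spaces is homotopically fully faithful, because the mapping objects $M_X^\Theta(x,y)$ are homotopy pullbacks (the relevant maps are fibrations by Reedy fibrancy) and so are preserved by levelwise weak equivalences, and it is essentially surjective because a levelwise weak equivalence induces a $\pi_0$-surjection on the spaces of objects. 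Thus $Lf$ is homotopically fully faithful and essentially surjective, so $f$ is a Dwyer-Kan equivalence by Definition \ref{dkequivdefn}.

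Finally, for the fibration claim I would argue directly, rather than through Proposition \ref{acycliccofs}, so as to avoid circularity. By the characterization, $f$ is a levelwise acyclic Kan fibration, and such a map has the right lifting property against every levelwise monomorphism of functors $\Thetanop \to \SSets$. Every map in $J^{n,T}$ is by construction a monomorphism, so $f$ has the right lifting property with respect to $J^{n,T}$; that is, $f$ is a fibration, completing the proof.
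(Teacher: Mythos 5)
Your proposal hinges entirely on the claimed characterization: that a map $f$ of $\Theta_n$-$T$-Segal precategories has the right lifting property with respect to $I^{n,T}$ if and only if it is an acyclic Kan fibration at every level outside $T$ and a bijection at every level in $T$ (in particular, a levelwise weak equivalence). This characterization is false, and it cannot be repaired, because together with Proposition \ref{backward} the statement you are proving identifies the maps with the RLP against $I^{n,T}$ as exactly the acyclic fibrations of $\Theta_n\Secat_T$, i.e.\ the fibrations that are Dwyer-Kan equivalences --- and Dwyer-Kan equivalences are not levelwise equivalences. Concretely, take $n=1$ and $T=\{[0]\}$, let $W$ be the nerve of the groupoid with two objects $a,b$ and a unique isomorphism between them (a Segal precategory with $W[0]=\{a,b\}$), and use the model structure of Theorem \ref{discretthetamc} to factor the Dwyer-Kan equivalence $W \to \ast$ as an acyclic cofibration $W \hookrightarrow W'$ followed by a fibration $W' \to \ast$. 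By two-out-of-three, $W' \to \ast$ is an acyclic fibration, hence has the RLP with respect to the generating cofibrations $I^{n,T}$; but cofibrations are monomorphisms, so $W'[0]$ still contains the two points $a$ and $b$, and $W'[0] \to \ast$ is neither injective nor a weak equivalence. So the RLP against $I^{n,T}$ forces surjectivity, not bijectivity, at the discrete levels, and forces nothing like a levelwise equivalence.

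The error in your unraveling is the $m=1$ step (``injectivity at $t \in T$'') and, relatedly, the claim that discretization ``has no effect'' on the lifting problems at objects $c \notin T$. The functor $(-)_T$ does not act level-by-level on these generating cofibrations: it collapses the $\Delta[m]$-direction at the $T$-levels, so a map $(\Delta[m] \times \Theta[q](c_1, \ldots, c_q))_T \to X$ is not an arbitrary $m$-simplex of $X[q](c_1, \ldots, c_q)$ but one lying over a fixed tuple of vertices of the discrete spaces $X[1]^{(i)}$ for $[1]^{(i)} \in T$. Consequently the conditions one can extract are acyclic-fibration conditions on the fibers $X[q](c_1, \ldots, c_q)(\vu^{(0)}) \cdots (\vu^{(j)})$ over such tuples, together with surjectivity on objects; this is precisely the content of Lemmas \ref{hompqx}, \ref{thesis4.1} and \ref{thesis9.1}, and it is why the correct conclusion is ``fibration and Dwyer-Kan equivalence'' (fiberwise fully faithful plus essentially surjective) rather than ``levelwise acyclic fibration.'' Note also that the paper must still do genuine work in the case where $X$ and $Y$ are not $\Theta_n$-$T$-Segal categories --- the Mather cube argument showing that the Segal localization $L$ preserves these fiberwise equivalences --- which your proposal skips, since under your (false) characterization it would be unnecessary. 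Finally, a secondary gap: even granting your characterization, your last paragraph deduces the RLP against levelwise monomorphisms from a levelwise acyclic Kan fibration; for diagram categories this implication fails in general, since levelwise lifts need not assemble into a map of diagrams --- one needs the Reedy (injective) acyclic fibration condition, which is strictly stronger than the levelwise one.
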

	
	\begin{prop} \label{backward}
		If $f \colon X \rightarrow Y$ is a map of $\Theta_n$-$T$-Segal precategories that is both a fibration and a Dwyer-Kan equivalence, then it has the right lifting property with respect to the maps in $I^{n,T}$.
	\end{prop}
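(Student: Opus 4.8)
The plan is to run the standard retract argument from the proof of Kan's recognition theorem, using the results already established for the set $J^{n,T}$. Here a \emph{fibration} means a $J^{n,T}$-injective map, since the model structure is still being constructed, so the goal is to show that a $J^{n,T}$-injective Dwyer-Kan equivalence $f \colon X \to Y$ is $I^{n,T}$-injective. The key idea is to exhibit $f$ as a retract of an $I^{n,T}$-injective map and then invoke closure of $I^{n,T}$-injectives under retracts.

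First I would apply the small object argument to $I^{n,T}$, which permits it as recorded in the proof of Theorem \ref{discretthetamc}, to factor $f$ as $X \xrightarrow{i} A' \xrightarrow{p} Y$ with $i$ a relative $I^{n,T}$-cell complex and $p$ an $I^{n,T}$-injective. This factorization stays inside $\SSets^{\Thetanop}_T$: the discretization functor $(-)_T$ is a left adjoint and hence preserves colimits, so pushouts and transfinite composites of the discretized generators in $I^{n,T}$ remain discrete at the objects of $T$. By Proposition \ref{forward}, the map $p$ is then a fibration and a Dwyer-Kan equivalence.

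Next I would verify that $i$ is an acyclic cofibration. As a relative $I^{n,T}$-cell complex, $i$ is a monomorphism, hence a cofibration; and applying two-out-of-three to $f = p \circ i$, using that $f$ and $p$ are both Dwyer-Kan equivalences, shows $i$ is a Dwyer-Kan equivalence too. Proposition \ref{acycliccofs}\eqref{thesis5.7} then writes $i$ as a directed colimit of pushouts along maps in $J^{n,T}$, so $i$ has the left lifting property against every $J^{n,T}$-injective, in particular against $f$.

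Finally, the lifting property of $i$ against $f$ produces a map $r \colon A' \to X$ with $r i = \id_X$ and $f r = p$, realizing $f$ as a retract of $p$; since $p$ is $I^{n,T}$-injective and this class is closed under retracts, $f$ is $I^{n,T}$-injective, completing the proof. The argument is entirely formal once the inputs are in place; the real content lives in Proposition \ref{forward} and Proposition \ref{acycliccofs}\eqref{thesis5.7}, and the only point demanding attention is confirming that the $I^{n,T}$-factorization does not leave the category of $\Theta_n$-$T$-Segal precategories, which is precisely why the generators were discretized from the outset.
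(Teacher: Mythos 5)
Your proof is correct, but it takes a genuinely different route from the paper's. The paper never uses Proposition \ref{forward} in its proof of Proposition \ref{backward}: it argues directly, first treating the case where $f$ restricts to an isomorphism at the discretized objects $[1]^{(i)}$ of $T$ (there one factors $f$ through a discreteness-preserving Reedy factorization, as in the proof of Proposition \ref{discreteloc}, and a retract argument exhibits $f$ as a Reedy acyclic fibration, hence with the right lifting property with respect to all monomorphisms), and then reduces the general case to this one by interpolating the object $\Phi Y = Y \times_{\cosk_T(Y)} \cosk_T(X)$ and checking by hand that each of $X \rightarrow \Phi Y$ and $\Phi Y \rightarrow Y$ has the right lifting property with respect to $I^{n,T}$. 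Your argument is instead the formal retract argument: factor $f$ by the small object argument for $I^{n,T}$, use Proposition \ref{forward} and two-out-of-three to see that the left factor is an acyclic cofibration, lift it against the fibration $f$ via Proposition \ref{acycliccofs}\eqref{thesis5.7}, and conclude by closure of $I^{n,T}$-injectives under retracts. This is shorter and purely formal, and it is not circular, since both Proposition \ref{forward} and Proposition \ref{acycliccofs}\eqref{thesis5.7} are established in the paper without reference to Proposition \ref{backward}; what the paper's longer route buys is the logical independence of the two halves of Proposition \ref{thesis5.8} and more explicit output (the construction of $\Phi Y$, and the identification of $f$ as a retract of a Reedy acyclic fibration in the isomorphism case). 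Two small repairs to your write-up: first, the discretization $(-)_T$ is the \emph{right} adjoint to the inclusion $\SSets^{\Thetanop}_T \rightarrow \SSets^{\Thetanop}$ (Section \ref{discreteresults}), not a left adjoint; the fact you actually need is that colimits in $\SSets^{\Thetanop}_T$ are computed levelwise and preserve the discreteness conditions, which is exactly what the paper records in the proof of Theorem \ref{discretthetamc}, so your conclusion that the factorization stays in $\SSets^{\Thetanop}_T$ stands. Second, you should say explicitly that the maps in $I^{n,T}$ are monomorphisms (they are cofibrations, as the paper asserts in Theorem \ref{discretthetamc}); this is what makes your relative $I^{n,T}$-cell complex $i$ a cofibration, and hence what entitles you to combine two-out-of-three with Proposition \ref{acycliccofs}\eqref{thesis5.7}.
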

	
	The more difficult of the two is Proposition \ref{forward}, which requires several preparatory lemmas.  For the beginning steps we work incrementally, starting with small values of $n$, to build intuition for the complicated notation we must inevitably use.
	
	Before delving into the details, recall that the definition of Dwyer-Kan equivalence is given in terms of mapping objects in a $\Theta_n$-space.  For the arguments we make in this section, we want to reformulate this definition somewhat.  Given $X \colon \Thetanop \rightarrow \SSets$, $c \in \ob(\Theta_{n-1})$, and $(v_0, v_1) \in X[0]_0 \times X[0]_0$, let $X[1](c)(v_0, v_1)$ be the fiber of the map $X[1](c) \rightarrow X[0] \times X[0]$ over $(v_0, v_1)$.  Then it is straightforward to check that
	\[ X[1](c)(v_0, v_1) = M_X^\Theta(v_0, v_1)(c). \]
	We use this notation, rather than the mapping object notation, for the remainder of this section.
	
	We want to understand the behavior of maps with the right lifting property with respect to maps in $I^{n,T}$.  However, it is easier to get a handle on maps with the right lifting property with respect to the maps in a related set that we denote by $I^{n,T}_f$, so we first consider these maps, which we develop in some detail.
	
	Given any object $[1]^{(i)}$ in $T$, we can evaluate any representable functor $\Theta[q](c_1, \ldots, c_q)$ at $[1]^{(i)}$ to obtain a set that we think of as a doubly constant functor $\Thetanop \times \Deltaop \rightarrow \Sets$, and we denote it by $\Theta[q](c_1, \ldots, c_q)_{[1]^{(i)}}$.   When $i=0$, we have $\Theta[q](c_1, \ldots, c_q)_{[0]} = \Hom([0], [q](c_1, \ldots, c_q))$, which is the set consisting of $q+1$ elements.  
	
	For any $m \geq 0$, object $[q](c_1, \ldots, c_q)$ of $\Theta_n$, and element $[1]^{(i)}$ in $T$, we have the projection and inclusion maps
	\[ \Theta[q](c_1, \ldots, c_q)_{[1]^{(i)}} \leftarrow \Delta[m] \times \Theta[q](c_1, \ldots, c_q)_{[1]^{(i)}} \rightarrow \Delta[m] \times \Theta[q](c_1, \ldots, c_q). \]
	Keeping $m$ and $[q](c_1, \ldots, c_q)$ fixed but varying over all $[1]^{(i)}$ in $T$, take the diagram given by all such maps and denote its colimit by $Q^{n,T}_{m, \cu}$.  Denote the colimit of the analogous diagram with $\Delta[m]$ replaced by $\partial \Delta[m]$ by $P^{n,T}_{m, \cu}$.  There are natural maps $P^{n,T}_{m, \cu} \rightarrow Q^{n,T}_{m, \cu}$, and it is this collection of maps we want to consider.  Specifically, define
	\[ I^{n,T}_{f} = \{P^{n,T}_{m, \cu} \rightarrow Q^{n,T}_{m, \cu} \mid m \geq 0, [q](c_1, \ldots, c_q) \in \ob(\Theta_n) \}. \]
	
	\begin{remark}
		This set of maps can be regarded as a set of generating cofibrations for a model structure for $\Theta_n$-$T$-Segal precategories that is more closely related to the projective model structure.  Indeed, these maps are designed to be an appropriate discretization of the generating cofibrations of the projective model structure on $\SSets^{\Thetanop}$.  The subscript $f$ is meant to be suggestive of this fact, even though we have removed the corresponding $c$ subscript from the corresponding injective or Reedy version.  We refer the reader to \cite[\S 6]{thesis} for a more detailed motivation for these kinds of maps in the case of Segal categories.
	\end{remark}
	
	Our first step is to obtain a good description of maps $P_{m,\cu}^{n,T} \rightarrow X$ and $Q_{m,\cu}^{n,T} \rightarrow X$ for general $X$.  Let us first recall the the case where $n=1$, namely Segal categories, which was treated in \cite[\S 4]{thesis}.  With a view toward generalization to higher $n$, and recalling that $\Theta_1=\Delta$, we denote the representable functor $\Deltaop \rightarrow \Sets$ on the object $[q]$ by $\Theta[q]$, rather than $\Delta[q]$. Here, we regard it as a discrete functor $\Deltaop \rightarrow \SSets$; we similarly treat the representable simplicial set $\Delta[m]$ as a constant functor $\Thetanop \rightarrow \SSets$.  In this case, there is only one object whose image we discretize, namely $[0]$, so there is no need to consider different choices of subsets $T$.  We thus simplify the notation for the moment and simply write $P_{m,q}$ and $Q_{m,q}$.  
	Let us recall some notation.  If $X$ is a Segal precategory and $(v_0, \ldots, v_q) \in X_0^{q+1}$, let $X_q(v_0, \ldots, v_q)$ denote the fiber of the natural map $X_q \rightarrow X_0^{q+1}$ given by iterated face maps.
	
	The following lemma was proved in \cite[\S 4]{thesis}; we sketch a proof here for the purposes of guiding our generalizations of it.
	
	\begin{lemma}
		When $n=1$, for fixed $m, q \geq 0$ and Segal precategory $X$, there are isomorphisms
		\[ \Hom(P_{m,q}, X) \cong \coprod_{(v_0, \ldots, v_q)} \Hom(\partial \Delta[m], X_q(v_0, \ldots, v_q)) \]
		and
		\[ \Hom(Q_{m,q}, X) \cong \coprod_{(v_0, \ldots, v_q)} \Hom(\Delta[m], X_q(v_0, \ldots, v_q)), \]
		where $(v_0, \ldots, v_q) \in X[0]_0^{q+1}$.
	\end{lemma}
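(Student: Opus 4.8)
The plan is to turn the colimits defining $Q_{m,q}$ and $P_{m,q}$ into pushouts, apply $\Hom(-,X)$ to convert them into pullbacks, and then identify the resulting pullback explicitly, with the discreteness of $X_0$ doing the essential work. First I would note that for $n=1$ the indexing set $T$ is the singleton $\{[0]\}$, so the diagram defining $Q_{m,q}$ reduces to the single span
\[ \Theta[q]_{[0]} \leftarrow \Delta[m]\times\Theta[q]_{[0]} \rightarrow \Delta[m]\times\Theta[q], \]
whose colimit is simply the pushout; likewise $P_{m,q}$ is the pushout of the analogous span with $\partial\Delta[m]$ in place of $\Delta[m]$. Applying $\Hom(-,X)$ then exhibits $\Hom(Q_{m,q},X)$ as the pullback of $\Hom(\Theta[q]_{[0]},X)$ and $\Hom(\Delta[m]\times\Theta[q],X)$ over $\Hom(\Delta[m]\times\Theta[q]_{[0]},X)$.

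Next I would compute the outer corners. The object $\Theta[q]_{[0]}$ is the doubly constant functor on the $(q+1)$-element set $\Hom_\Delta([0],[q])$ of vertices of $[q]$, hence a coproduct of $q+1$ copies of the point $\Theta[0]$; since $\Hom(\Theta[0],X)\cong X[0]_0$, a map $\Theta[q]_{[0]}\to X$ is precisely a tuple $(v_0,\ldots,v_q)\in X[0]_0^{q+1}$ of objects. For the other corner, representability of $\Theta[q]$ in the $\Theta$-direction gives $\uMap(\Theta[q],X)\cong X_q$, and combining this with the tensoring of $\SSets^{\Deltaop}$ over $\SSets$ yields $\Hom(\Delta[m]\times\Theta[q],X)\cong\Hom_{\SSets}(\Delta[m],X_q)=(X_q)_m$, the set of $m$-simplices of the simplicial set $X_q$.

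The step I expect to require the most care is the identification of the two legs into $\Hom(\Delta[m]\times\Theta[q]_{[0]},X)$ and the extraction of the pullback. Restricting an $m$-simplex $\sigma$ of $X_q$ along the $q+1$ vertex inclusions $\Theta[0]\to\Theta[q]$ produces $q+1$ maps $\Delta[m]\to X_0$, and this is exactly where I would invoke that $X$ is a Segal precategory: because $X_0$ is discrete these maps are constant and determine well-defined elements of $X[0]_0$. The pullback condition then states precisely that this tuple of vertices coincides with the chosen $(v_0,\ldots,v_q)$, i.e. that $\sigma$ lands in the fiber $X_q(v_0,\ldots,v_q)$ of $X_q\to X_0^{q+1}$. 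Sorting the $m$-simplices of $X_q$ by their vertex tuples therefore gives
\[ \Hom(Q_{m,q},X)\cong\coprod_{(v_0,\ldots,v_q)}\Hom(\Delta[m],X_q(v_0,\ldots,v_q)). \]
The argument for $P_{m,q}$ is word-for-word the same, the compatibility condition again forcing the boundary map $\partial\Delta[m]\to X_q$ to factor through the fiber $X_q(v_0,\ldots,v_q)$, which yields the stated formula with $\partial\Delta[m]$ in place of $\Delta[m]$.
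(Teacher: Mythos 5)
Your proposal is correct and follows essentially the same route as the paper: realize $Q_{m,q}$ (resp.\ $P_{m,q}$) as the pushout of the span $\Theta[q]_0 \leftarrow \Delta[m]\times\Theta[q]_0 \rightarrow \Delta[m]\times\Theta[q]$, apply $\Hom(-,X)$ to convert it into a pullback of sets with corners $X[0]_0^{q+1}$, $X[q]_m$, and $X[0]_m^{q+1}$, and identify that pullback with the coproduct of fibers $\coprod_{(v_0,\ldots,v_q)}\Hom(\Delta[m], X_q(v_0,\ldots,v_q))$. Your write-up is somewhat more explicit than the paper's sketch about where discreteness of $X_0$ enters (forcing the vertex restrictions to be constant, so simplices sort by their vertex tuples), but the argument is the same.
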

	
	\begin{proof}[Sketch of proof]
		We summarize the argument for $Q_{m,q}$; the one for $P_{m,q}$ is similar.  We have defined $Q_{m,q}$ as the pushout in the diagram
		\[ \xymatrix{\Delta[m] \times \Theta[q]_0 \ar[r] \ar[d] & \Delta[m] \times \Theta[q]_0 \ar[d] \\
			\Theta[q]_0 \ar[r] & Q_{m,q}. } \]
		Applying the functor $\Hom(-,X)$, we obtain a pullback diagram of sets
		\[ \xymatrix{\Hom(Q_{m,q}, X) \ar[r] \ar[d] & \Hom(\Delta[m] \times \Theta[q], X) = X[q]_m \ar[d] \\
			X[0]_0^{q+1} = \Hom(\Theta[q]_0, X) \ar[r] & \Hom(\Delta[m] \times \Theta[q]_0, X) = X[0]_m^{q+1}.} \]
		But, this pullback can also be described as
		\[ \coprod_{(v_0, \ldots, v_q)} \Hom(\Delta[m], X_q(v_0, \ldots, v_q)). \]
	\end{proof}
	
	Now, we want to generalize this argument.  Not surprisingly, the combinatorics get quite complicated as we require that more levels of $X$ be discrete.  Let us start with $n=2$ and the set $S=\{[0], [1]([0])\}$ before attempting a full generalization to higher $n$ and proper subsets of $S$.  
	
	For a fixed $m \geq 0$ and $[q](c_1, \ldots c_q)$, we have defined $Q_{m, \cu}^{2,S}$ to be the colimit of the diagram
	\begin{equation} \label{qmqc}
		\xymatrix{& \Delta[m] \times \Theta[q](c_1, \ldots, c_q) & \\
			\Delta[m] \times \Theta[q](c_1, \ldots, c_q)_{[0]}  \ar[ur] \ar[d] && \Delta[m] \times \Theta[q](c_1, \ldots, c_q)_{[1]([0])}  \ar[ul] \ar[d] \\
			\Theta[q](c_1, \ldots, c_q)_{[0]} && \Theta[q](c_1, \ldots, c_q)_{[1]([0])}.}
	\end{equation} 
	If we focus on the two arrows on the left-hand side of this diagram, and take the pushout thereof, the situation is very similar to the one from the $n=1$ case.  Namely, if we apply the functor $\Hom(-,X)$ to these two arrows, we get a diagram
	\[ \xymatrix{& \Hom(\Delta[m] \times \Theta[q](c_1, \ldots, c_q), X) \ar[d] \\
		\Hom(\Theta[q](c_1, \ldots, c_q)_{[0]}, X) \ar[r] & \Hom(\Delta[m] \times \Theta[q](c_1, \ldots, c_q)_{[0]}, X) } \]
	that can be rewritten as
	\[ \xymatrix{ & X[q](c_1, \ldots, c_q)_m \ar[d] \\
		X[0]^{q+1}_0 \ar[r] & X[0]^{q+1}_m.} \]
	The pullback of this diagram is given by
	\[ \coprod_{(v_0, \ldots, v_q)} X[q](c_1, \ldots, c_q)(v_0, \ldots, v_q)_m, \]
	where similarly to above, $X[q](c_1, \ldots, c_q)(v_0, \ldots, v_q)$ is the fiber of the map $X[q](c_1, \ldots, c_q) \rightarrow X_0^{q+1}$.  This pullback is isomorphic to
	\[ \coprod_{(v_0, \ldots, v_q)} \Hom(\Delta[m], X[q](c_1, \ldots, c_q)(v_0, \ldots, v_q)). \]
	
	Now, let us treat the pushout of the two right-hand arrows of \eqref{qmqc} analogously.  We first apply the functor $\Hom(-,X)$ and then describe the pullback of the resulting diagram.  So, our first step is to describe the objects of 
	\[ \xymatrix{& \Hom(\Delta[m] \times \Theta[q](c_1, \ldots, c_q), X) \ar[d] \\
		\Hom(\Theta[q](c_1, \ldots, c_q)_{[1]([0])}, X) \ar[r] & \Hom(\Delta[m] \times \Theta[q](c_1, \ldots, c_q)_{[1]([0])}, X).} \]
	Looking at the bottom row, these sets should be described as some coproduct of copies of $X[1]([0])_0$ and $X[1]([0])_m$, respectively, indexed by maps $[1]([0]) \rightarrow [q](c_1, \ldots, c_q)$ in $\Theta_2$.  We denote by $\theta(1,\cu)$ the number of such maps.  Thus, we can describe this diagram instead as
	\[ \xymatrix{ & X[q](c_1, \ldots, c_q)_m \ar[d] \\
		X[1]([0])_0^{\theta(1,\cu)} \ar[r] & X[1]([0])_m^{\theta(1,\cu)}} \]
	whose pullback is
	\[ \coprod_{(w_1, \ldots, w_{\theta(1,\cu)})} X[q](c_1, \ldots, c_q)(w_0, \ldots, w_{\theta(1,\cu)})_m. \]
	This pullback can be written alternatively as
	\[ \coprod_{(w_1, \ldots, w_{\theta(1,\cu)})} \Hom(\Delta[m], 
	X[q](c_1, \ldots, c_q)(w_1, \ldots, w_{\theta(1,\cu)})). \]
	
	Now, if we want to take the colimit of the diagram \eqref{qmqc}, then we can merge these two descriptions to see that $\Hom(Q_{m,\cu}^{2,S}, X)$ is isomorphic to
	\[ \coprod_{(v_0, \ldots, v_q)} \coprod_{(w_1, \ldots, w_{\theta(q, \cu)})} \Hom(\Delta[m], X[q](c_1, \ldots, c_q)(v_0, \ldots, v_q)(w_1, \ldots, w_{\theta(1,\cu)})). \]
	
	We summarize these findings, and the analogous result for $P_{m, \cu}^{2,S}$, as well as generalizations for $T \subseteq S$, in the following lemma.  For notational simplicity, we write $\vu=(v_0, \ldots, v_q)$ and $\wu = (w_1, \ldots, w_{\theta(1,\cu)})$.
	
	\begin{lemma}
		When $n=2$, for a fixed $m \geq 0$, object $[q](c_1, \ldots, c_q)$ of $\Theta_2$, and functor $X \colon \Theta_2^{\op} \rightarrow \SSets$, there are natural isomorphisms
		\[ \Hom(P_{m,\cu}^{2,S}, X) \cong \coprod_{\vu} \coprod_{\wu} \Hom \left(\partial \Delta[m], X[q](c_1, \ldots, c_q)(\vu)(\wu) \right) \]
		and
		\[ \Hom(Q_{m,\cu}^{2,S}, X) \cong \coprod_{\vu} \coprod_{\wu} \Hom \left( \Delta[m], X[q](c_1, \ldots, c_q)(\vu)(\wu) \right). \]
		Discretizing only at $T=\{[0]\}$, we have
		\[ \Hom(P_{m,\cu}^{2,T}, X) \cong \coprod_{\vu} \Hom \left(\partial \Delta[m], X[q](c_1, \ldots, c_q)(\vu) \right) \]
		and
		\[ \Hom(Q_{m,\cu}^{2,T}, X) \cong \coprod_{\vu} \Hom \left( \Delta[m], X[q](c_1, \ldots, c_q)(\vu) \right). \]
	\end{lemma}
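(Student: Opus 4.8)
The plan is to compute both hom-sets by applying the contravariant functor $\Hom(-,X)$ to the defining colimit diagram \eqref{qmqc} and then identifying the resulting limit explicitly; the argument is exactly the one carried out informally in the paragraphs preceding the lemma, run once for each of the two ``arms'' of \eqref{qmqc}. First I would record that, since $\Hom(-,X)$ converts colimits into limits, $\Hom(Q^{2,S}_{m,\cu},X)$ is the limit of the diagram obtained by applying $\Hom(-,X)$ objectwise to \eqref{qmqc}. The five vertices are computed by the Yoneda lemma and the usual product-hom adjunction: the apex gives $\Hom(\Delta[m]\times\Theta[q](c_1,\ldots,c_q),X)\cong X[q](c_1,\ldots,c_q)_m$; the two bottom corners, being doubly constant on the sets $\Theta[q](c_1,\ldots,c_q)_{[0]}$ and $\Theta[q](c_1,\ldots,c_q)_{[1]([0])}$, give $X[0]_0^{q+1}$ and $X[1]([0])_0^{\theta(1,\cu)}$; and the two middle objects give $X[0]_m^{q+1}$ and $X[1]([0])_m^{\theta(1,\cu)}$ respectively.

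Next I would identify the limit of this five-term diagram. An element consists of an $m$-simplex $\xi\in X[q](c_1,\ldots,c_q)_m$ together with a vertex tuple $\vu\in X[0]_0^{q+1}$ and a tuple $\wu\in X[1]([0])_0^{\theta(1,\cu)}$, subject to the requirement that the restriction of $\xi$ along the $q+1$ maps $[0]\to[q](c_1,\ldots,c_q)$ (resp. the $\theta(1,\cu)$ maps $[1]([0])\to[q](c_1,\ldots,c_q)$) agree with the totally degenerate $m$-simplex determined by $\vu$ (resp. $\wu$). Since $\vu$ and $\wu$ record only $0$-simplices and both are restrictions of the single simplex $\xi$, they are automatically compatible, and the value of $(\vu,\wu)$ is a discrete invariant of $\xi$. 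Partitioning the limit according to $(\vu,\wu)$ therefore expresses it as $\coprod_{\vu}\coprod_{\wu} X[q](c_1,\ldots,c_q)(\vu)(\wu)_m$, where $X[q](c_1,\ldots,c_q)(\vu)(\wu)$ is the iterated fiber of $X[q](c_1,\ldots,c_q)\to X[0]^{q+1}\times X[1]([0])^{\theta(1,\cu)}$ over $(\vu,\wu)$, with incompatible pairs contributing empty fibers. Rewriting each $K_m$ as $\Hom(\Delta[m],K)$ then yields the stated isomorphism for $Q^{2,S}_{m,\cu}$.

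The remaining cases follow the same template. For $P^{2,S}_{m,\cu}$ the diagram is identical with $\partial\Delta[m]$ in place of $\Delta[m]$, so the same computation---using $\Hom(\partial\Delta[m],K)$ in the last step---gives the first displayed isomorphism. For the discretization at $T=\{[0]\}$ only, the colimits defining $Q^{2,T}_{m,\cu}$ and $P^{2,T}_{m,\cu}$ retain just the left-hand arm of \eqref{qmqc}, so the computation collapses to a single pullback indexed by $\vu$ alone and produces the single-coproduct formulas; this case is precisely the $n=1$ lemma applied at the object $[q](c_1,\ldots,c_q)$.

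The step I expect to be the main obstacle is the explicit identification of the limit in the second paragraph: one must verify that the two face-restriction constraints (at $[0]$ and at $[1]([0])$) assemble into a single iterated fiber and that partitioning by the discrete data $(\vu,\wu)$ is legitimate, i.e. that the limit of the five-term diagram is genuinely the coproduct of these fibers and not a more complicated equalizer. This relies on the fact that the bottom corners of \eqref{qmqc} are doubly constant, so that mapping out of them records only $0$-simplices; keeping careful track of the indexing set $\theta(1,\cu)=|\Hom_{\Theta_2}([1]([0]),[q](c_1,\ldots,c_q))|$ and of the compatibility of the two arms through the shared simplex $\xi$ is where the combinatorial bookkeeping concentrates.
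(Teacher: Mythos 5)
Your proposal is correct and follows essentially the same route as the paper: apply $\Hom(-,X)$ to the defining colimit diagram \eqref{qmqc}, compute the resulting vertices via Yoneda and the product-hom adjunction, and recognize the limit as a coproduct of (iterated) fibers indexed by the discrete tuples $\vu$ and $\wu$, with the $P$ case obtained by substituting $\partial\Delta[m]$ and the $T=\{[0]\}$ case by dropping the right-hand arm. The only cosmetic difference is that you compute the limit of the whole five-term diagram at once, whereas the paper processes the two arms separately as pullbacks and then merges the two descriptions.
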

	
	Now let us consider the general case of $Q_{m,\cu}^{n,T}$ for general $n$.  To this end, let us denote the number of maps $[1]^{(i)} \rightarrow [q](c_1, \ldots, c_q)$ by $\theta(i,\cu)$.  We further denote an element of the set $(X[1]^{(i)}_0)^{\theta(i,\cu)}$ by $\vu^{(i)}=(v_1^{(i)}, \ldots, v_{\theta(i,\cu)}^{(i)})$.  The argument above generalizes to give a proof of the following lemma.  
	
	\begin{lemma} \label{hompqx}
		Let $X \colon \Thetanop \rightarrow \SSets$, and let $T=\{[1]^{(i)} \mid 0 \leq i \leq j \} \subseteq S$.  There are natural isomorphisms
		\[ \Hom(P_{m,\cu}^{n,T}, X) \cong \coprod_{\vu^{(0)}} \cdots \coprod_{\vu^{(j)}} \Hom \left(\partial \Delta[m], X[q](c_1, \ldots, c_q)(\vu^{(0)})\cdots (\vu^{(j)})\right), \]
		and 
		\[ \Hom(Q_{m,\cu}^{n,T}, X) \cong \coprod_{\vu^{(0)}} \cdots \coprod_{\vu^{(j)}} \Hom \left(\Delta[m], X[q](c_1, \ldots, c_q)(\vu^{(0)})\cdots (\vu^{(j)})\right). \]
	\end{lemma}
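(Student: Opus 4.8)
The plan is to compute both sides by turning the colimit defining $Q_{m,\cu}^{n,T}$ into a limit under $\Hom(-,X)$ and then extracting a coproduct decomposition indexed by tuples of vertices, exactly as in the $n=2$ computation above but now with one ``leg'' for each of the $j+1$ objects $[1]^{(0)},\ldots,[1]^{(j)}$ of $T$. First I would record the shape of the defining diagram: it has the single apex $\Delta[m]\times\Theta[q](c_1,\ldots,c_q)$, and for each $0\le i\le j$ a leg $\Theta[q](c_1,\ldots,c_q)_{[1]^{(i)}}\leftarrow \Delta[m]\times\Theta[q](c_1,\ldots,c_q)_{[1]^{(i)}}\rightarrow \Delta[m]\times\Theta[q](c_1,\ldots,c_q)$, the legs meeting only at the apex. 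Since $\Hom(-,X)$ carries colimits to limits, $\Hom(Q_{m,\cu}^{n,T},X)$ is the wide pullback
\[ \Hom(\Delta[m]\times\Theta[q](c_1,\ldots,c_q),X)\times_{\prod_i\Hom(\Delta[m]\times\Theta[q](c_1,\ldots,c_q)_{[1]^{(i)}},X)}\prod_i\Hom(\Theta[q](c_1,\ldots,c_q)_{[1]^{(i)}},X). \]

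Next I would identify the three kinds of vertices by representability, generalizing the identifications already made for $n=2$. Using that $\Theta[q](c_1,\ldots,c_q)$ is representable and $\Delta[m]$ is constant in the $\Thetanop$-direction gives $\Hom(\Delta[m]\times\Theta[q](c_1,\ldots,c_q),X)\cong X[q](c_1,\ldots,c_q)_m$. For each $i$, indexing by the $\theta(i,\cu)$ maps $[1]^{(i)}\to[q](c_1,\ldots,c_q)$ yields $\Hom(\Theta[q](c_1,\ldots,c_q)_{[1]^{(i)}},X)\cong (X[1]^{(i)}_0)^{\theta(i,\cu)}$ and $\Hom(\Delta[m]\times\Theta[q](c_1,\ldots,c_q)_{[1]^{(i)}},X)\cong (X[1]^{(i)}_m)^{\theta(i,\cu)}$. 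Under these identifications the leg map from the apex is the product of the $\theta(i,\cu)$ evaluation maps $X[q](c_1,\ldots,c_q)_m\to (X[1]^{(i)}_m)^{\theta(i,\cu)}$ induced by those maps of $\Theta_n$, and the leg map from the bottom is the degeneracy inclusion $(X[1]^{(i)}_0)^{\theta(i,\cu)}\hookrightarrow (X[1]^{(i)}_m)^{\theta(i,\cu)}$.

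Then I would read off the pullback. An element is an $m$-simplex $\phi$ of $X[q](c_1,\ldots,c_q)$ together with tuples $\vu^{(0)},\ldots,\vu^{(j)}$ of $0$-simplices, subject to the condition that for each $i$ every $[1]^{(i)}$-projection of $\phi$ is the degenerate $m$-simplex on the corresponding entry of $\vu^{(i)}$. Fixing the tuples $\vu^{(0)},\ldots,\vu^{(j)}$ selects a summand of the coproduct, and the compatibility condition says precisely that $\phi$ lies in the simultaneous fiber over those vertices, which is the iterated fiber $X[q](c_1,\ldots,c_q)(\vu^{(0)})\cdots(\vu^{(j)})$; these fibers may be taken in any order since each is a pullback over a discrete set of $0$-simplices. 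Decomposing over the choices of $\vu^{(0)},\ldots,\vu^{(j)}$ therefore yields $\coprod_{\vu^{(0)}}\cdots\coprod_{\vu^{(j)}}\Hom(\Delta[m],X[q](c_1,\ldots,c_q)(\vu^{(0)})\cdots(\vu^{(j)}))$, with all identifications natural in $X$. The statement for $P_{m,\cu}^{n,T}$ is obtained verbatim by replacing $\Delta[m]$ with $\partial\Delta[m]$ throughout.

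The routine parts are the representability identifications and the passage from colimit to wide pullback. The step needing the most care, and the one I expect to be the main obstacle, is verifying that the several compatibility conditions coming from the distinct legs assemble simultaneously into membership in the single iterated fiber $X[q](c_1,\ldots,c_q)(\vu^{(0)})\cdots(\vu^{(j)})$ — that is, that imposing the constraints level by level (from the bottom up) is independent of order and produces no unexpected interaction between the nested fibers at different $[1]^{(i)}$. This is exactly where the combinatorics of the maps $[1]^{(i)}\to[q](c_1,\ldots,c_q)$, encoded in the numbers $\theta(i,\cu)$, must be tracked carefully.
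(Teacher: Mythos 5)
Your proposal is correct and follows essentially the same route as the paper: the paper carries out exactly this computation (apply $\Hom(-,X)$ to the defining colimit diagram, identify the resulting terms via representability, and decompose the wide pullback as a coproduct of iterated fibers indexed by vertex tuples) explicitly for $n=1$ and $n=2$, and then observes that the argument generalizes verbatim to arbitrary $n$ and $T$. Your identification of the leg maps as evaluation maps and degeneracy inclusions, and your observation that the simultaneous compatibility conditions amount to membership in the iterated fiber independently of the order of the fibers, is precisely the content of that generalization.
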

	
	Now, for the general case, for any $n$ and $T$, consider the set
	\[ I_f^{n,T} = \left\{P_{m,\cu}^{n,T} \rightarrow Q_{m,\cu}^{n,T} \mid m \geq 0, [q](c_1, \ldots, c_q) \in \ob(\Theta_n) \right\}. \]
	The following result is the main technical point we need to prove Proposition \ref{forward}.
	
	\begin{lemma} \label{thesis4.1}
		Let $T=\{[1]^{(i)} \mid 0 \leq i \leq j \} \subseteq S$, and suppose that $f \colon X \rightarrow Y$ is a map of $\Theta_n$-$T$-Segal precategories with the right lifting property with respect to the maps in $I_f^{n,T}$.  Then the map $X[0] \rightarrow Y[0]$ is surjective and each map 
		\[ X[q](c_1, \ldots, c_q)(\vu^{(0)}) \cdots (\vu^{(j)}) \rightarrow Y[q](c_1, \ldots, c_q)(f\vu^{(0)}) \cdots (f\vu^{(j)}) \]
		is an acyclic fibration of simplicial sets for any object $[q](c_1, \ldots, c_q)$ of $\Theta_n$ and every choice of $\vu^{(i)} \in (X[1]^{(i)}_0)^{\theta(i,\cu)}$ for each $0 \leq i \leq j$.
	\end{lemma}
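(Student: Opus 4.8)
The plan is to reduce everything to the standard fact that a map of simplicial sets is an acyclic (trivial Kan) fibration precisely when it has the right lifting property against every boundary inclusion $\partial \Delta[m] \rightarrow \Delta[m]$, and then to feed the explicit $\Hom$-computations of Lemma \ref{hompqx} into the lifting problems for $f$ against the maps $P_{m,\cu}^{n,T} \rightarrow Q_{m,\cu}^{n,T}$. Since the isomorphisms of that lemma are natural in the target, they should convert a lifting problem for $f$ into a disjoint union, over tuples, of fiberwise lifting problems, at which point the characterization of acyclic fibrations applies directly.

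Concretely, I would fix an object $[q](c_1, \ldots, c_q)$ of $\Theta_n$ and an integer $m \geq 0$, and contemplate a commutative square with left-hand map $P_{m,\cu}^{n,T} \rightarrow Q_{m,\cu}^{n,T}$, right-hand map $f$, top map $P_{m,\cu}^{n,T} \rightarrow X$, and bottom map $Q_{m,\cu}^{n,T} \rightarrow Y$. Applying Lemma \ref{hompqx}, the top map is the same data as a tuple $(\vu^{(0)}, \ldots, \vu^{(j)})$ with $\vu^{(i)} \in (X[1]^{(i)}_0)^{\theta(i,\cu)}$ together with a map $\partial \Delta[m] \rightarrow X[q](c_1, \ldots, c_q)(\vu^{(0)}) \cdots (\vu^{(j)})$, while the bottom map is a $Y$-tuple together with a map $\Delta[m] \rightarrow Y[q](c_1, \ldots, c_q)(\cdots)$. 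Because the isomorphisms are natural and the tuples are recorded by $0$-simplices, which are unaffected by $\partial \Delta[m] \rightarrow \Delta[m]$, commutativity forces the $Y$-tuple to be $(f\vu^{(0)}, \ldots, f\vu^{(j)})$ and forces the bottom boundary to be the $f$-image of the given one. A lift $Q_{m,\cu}^{n,T} \rightarrow X$ then amounts exactly to a filler $\Delta[m] \rightarrow X[q](c_1, \ldots, c_q)(\vu^{(0)}) \cdots (\vu^{(j)})$ extending the prescribed boundary and mapping under $f$ to the prescribed bottom. Thus the right lifting property of $f$ against $P_{m,\cu}^{n,T} \rightarrow Q_{m,\cu}^{n,T}$ is equivalent to that of every fiber map
\[ X[q](c_1, \ldots, c_q)(\vu^{(0)}) \cdots (\vu^{(j)}) \rightarrow Y[q](c_1, \ldots, c_q)(f\vu^{(0)}) \cdots (f\vu^{(j)}) \]
against $\partial \Delta[m] \rightarrow \Delta[m]$. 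Letting $m$ and $[q](c_1, \ldots, c_q)$ range over all values, the hypothesis that $f$ is injective with respect to all of $I_f^{n,T}$ says exactly that each such fiber map has the right lifting property against every boundary inclusion, hence is an acyclic fibration, which is the second assertion. The surjectivity of $X[0] \rightarrow Y[0]$ I would extract from the $m=0$ instances, where $\partial \Delta[0] = \varnothing$ and the lifting condition collapses to a surjectivity on vertices, mirroring the $n=1$ argument of \cite[\S 4]{thesis}; this is the less delicate of the two conclusions.

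The main obstacle is bookkeeping rather than conceptual: I must check with care that, under the isomorphisms of Lemma \ref{hompqx}, the single lifting square for $f$ decomposes compatibly — coproduct summand by coproduct summand — into the fiberwise lifting squares, and in particular that the $j+1$ simultaneously discretized levels are tracked correctly, so that the full tuple $(\vu^{(0)}, \ldots, \vu^{(j)})$ and the iterated fiber $X[q](c_1, \ldots, c_q)(\vu^{(0)}) \cdots (\vu^{(j)})$ are carried by $f$ to $(f\vu^{(0)}, \ldots, f\vu^{(j)})$ and the corresponding iterated fiber in $Y$. This is precisely where the combinatorial identifications assembled in the lemmas leading up to Lemma \ref{hompqx} are indispensable; once they are in place, naturality makes the decomposition, and hence the passage from the lifting property against $I_f^{n,T}$ to the acyclic fibration conclusion, essentially formal.
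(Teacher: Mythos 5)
Your proposal matches the paper's own proof essentially step for step: the paper likewise converts the lifting problem for $f$ against $P^{n,T}_{m,\cu} \rightarrow Q^{n,T}_{m,\cu}$ into surjectivity of $\Hom(Q^{n,T}_{m,\cu},X)$ onto the pullback $\Hom(P^{n,T}_{m,\cu},X) \times_{\Hom(P^{n,T}_{m,\cu},Y)} \Hom(Q^{n,T}_{m,\cu},Y)$, decomposes that pullback component by component via Lemma \ref{hompqx} (so that on each fixed tuple $(\vu^{(0)}, \ldots, \vu^{(j)})$ the condition becomes exactly the lifting of $\partial \Delta[m] \rightarrow \Delta[m]$ against the fiber map), and handles surjectivity of $X[0] \rightarrow Y[0]$ through the degenerate case where the boundary is empty. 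The only cosmetic difference is that you phrase the reduction directly in terms of lifting squares rather than surjectivity onto a pullback of $\Hom$-sets, which is the same argument.
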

	
	\begin{proof}
		The fact that $X[0] \rightarrow Y[0]$ is surjective follows from the fact that $f$ has the right lifting property with respect to the map $\varnothing \rightarrow \Theta[0]$.  Thus, we need to show that there is a lift in any diagram
		\[ \xymatrix{\partial \Delta[m] \ar[r] \ar[d] & X[q](c_1, \ldots, c_q)(\vu^{(0)}) \cdots (\vu^{(j)}) \ar[d] \\
			\Delta[m] \ar[r] \ar@{-->}[ur] & Y[q](c_1, \ldots, c_q)(f\vu^{(0)}) \cdots (f\vu^{(j)}). } \]
		By assumption, we know there exist lifts for diagrams
		\[ \xymatrix{P_{m,\cu}^{n,T} \ar[r] \ar[d] & X \ar[d] \\
			Q_{m,\cu}^{n,T} \ar[r] \ar@{-->}[ur] & Y.} \]
		Equivalently, in the diagram
		\[ \xymatrix{\Hom(Q_{m,\cu}^{n,T}, X) \ar[r] & P \ar[r] \ar[d] & \Hom(P_{m,\cu}^{n,T}, X) \ar[d] \\
			& \Hom(Q_{m,\cu}^{n,T} Y) \ar[r] & \Hom(P_{m,\cu}^{n,T}, Y),} \]
		where $P$ denotes the pullback of the right-hand square, the top left-hand map is surjective.  Using Lemma \ref{hompqx}, we can write $P$ as the pullback of the diagram
		\[ \xymatrix{\coprod_{\vu^{(0)}} \cdots \coprod_{\vu^{(j)}} \Hom \left(\Delta[m], Y[q](c_1, \ldots, c_q)(f\vu^{(0)}) \cdots (f\vu^{(j)})\right) \ar[d] \\
			\coprod_{\vu^{(0)}} \cdots \coprod_{\vu^{(j)}} \Hom \left(\partial \Delta[m], Y[q](c_1, \ldots, c_q)(f\vu^{(0)}) \cdots (f\vu^{(j)}) \right) \\
			\coprod_{\vu^{(0)}} \cdots \coprod_{\vu^{(j)}} \Hom \left(\partial \Delta[m], X[q](c_1, \ldots, c_q)(\vu^{(0)}) \cdots (\vu^{(j)}) \right). \ar[u] } \]
		On each component, i.e., fixing each $\vu^{(i)}$, the surjectivity of the map from
		\[ \coprod_{\vu^{(0)}} \cdots \coprod_{\vu^{(j)}} \Hom \left(\Delta[m], X[q](c_1, \ldots, c_q)(\vu^{(0)}) \cdots (\vu^{(j)})\right) \]
		to the appropriate component of $P$ produces exactly our desired lift.
	\end{proof}
	
	Now we use this result to shift our attention back to the maps $I^{n,T}$.
	
	\begin{lemma} \label{thesis9.1}
		Let $T=\{[1]^{(i)} \mid 0 \leq i \leq j \} \subseteq S$, and suppose that $f \colon X \rightarrow Y$ is a map of $\Theta_n$-$T$-Segal precategories with the right lifting property with respect to the maps in $I^{n,T}$.  Then $f_0 \colon X[0] \rightarrow Y[0]$ is surjective and the maps
		\[ X[q](c_1, \ldots, c_q)(\vu^{(0)}) \cdots (\vu^{(j)}) \rightarrow Y[q](c_1, \ldots, c_q)(f\vu^{(0)}) \cdots (f\vu^{(j)}) \] 
		are acyclic fibrations for any object $[q](c_1, \ldots, c_q)$ of $\Theta_n$ and every choice of $\vu^{(i)} \in (X[1]^{(i)}_0)^{\theta(i,\cu)}$ for each $0 \leq i \leq j$.
	\end{lemma}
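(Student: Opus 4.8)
The plan is to reduce the statement to the already-established Lemma~\ref{thesis4.1}, which records exactly this conclusion for maps with the right lifting property with respect to the projective-type set $I_f^{n,T}$. Since the maps in both $I^{n,T}$ and $I_f^{n,T}$ are monomorphisms in $\SSets^{\Thetanop}_T$, and the conclusion we want is the one attached to $I_f^{n,T}$, it suffices to prove that every map in $I_f^{n,T}$ lies in the saturation of $I^{n,T}$, i.e.\ is a retract of a transfinite composite of pushouts of maps in $I^{n,T}$. Granting this, any map with the right lifting property with respect to $I^{n,T}$ automatically has the right lifting property with respect to $I_f^{n,T}$, and Lemma~\ref{thesis4.1} then furnishes both the surjectivity of $f_0\colon X[0]\to Y[0]$ and the desired fiberwise acyclic fibration statement.

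To compare the two sets I would first argue before discretization. Writing $K=\Theta[q](c_1,\ldots,c_q)$, the undiscretized analogue of the corresponding map of $I_f^{n,T}$ is the map $\partial\Delta[m]\times K\to\Delta[m]\times K$, which is the pushout--product of $\partial\Delta[m]\hookrightarrow\Delta[m]$ with $\varnothing\to K$. Because $\Thetanop$ is an elegant Reedy category, the inclusion $\varnothing\to K$ of the representable $K$ is a relative cell complex assembled, by Reedy degree, from the boundary inclusions $\partial\Theta[r](d_1,\ldots,d_r)\to\Theta[r](d_1,\ldots,d_r)$ indexed by the cells of $K$. Pushout--product with the fixed cofibration $\partial\Delta[m]\hookrightarrow\Delta[m]$ preserves this cell structure, and each resulting pushout--product
\[ \partial\Delta[m]\times\Theta[r](d_1,\ldots,d_r)\cup\Delta[m]\times\partial\Theta[r](d_1,\ldots,d_r)\to\Delta[m]\times\Theta[r](d_1,\ldots,d_r) \]
is precisely an undiscretized generating cofibration of the shape appearing in $I^{n,T}$. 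Thus, before discretization, $\partial\Delta[m]\times K\to\Delta[m]\times K$ is a relative cell complex on the undiscretized version of $I^{n,T}$.

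It remains to transport this decomposition through the $T$-discretization functor $(-)_T$ of Definition~\ref{discretization}, and this is where I expect the main obstacle to lie. The subtlety is that $(-)_T$ is defined by collapsing the simplicial sets indexed by the objects of $T$, so one must check both that it sends the undiscretized generating cofibrations above to genuine maps of $I^{n,T}$ and that it carries the relevant pushouts and transfinite composites to pushouts and composites in $\SSets^{\Thetanop}_T$. I would manage this by an induction on the Reedy degree of $[q](c_1,\ldots,c_q)$, factoring the map of $I_f^{n,T}$ as
\[ (\partial\Delta[m]\times K)_T\to(\partial\Delta[m]\times K\cup\Delta[m]\times\partial K)_T\to(\Delta[m]\times K)_T. \]
The second map is exactly the map of $I^{n,T}$ indexed by $[q](c_1,\ldots,c_q)$. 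The first map is the pushout of $(\partial\Delta[m]\times\partial K)_T\to(\Delta[m]\times\partial K)_T$, which by the skeletal filtration of $\partial K$ is built from the discretized generating cofibrations of strictly lower Reedy degree and is therefore covered by the inductive hypothesis. The base case is the lowest Reedy degree, where $\partial K=\varnothing$ and the factorization collapses to a single map of $I^{n,T}$.

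The one genuinely delicate point in this induction is verifying that discretizing the lower--degree cells again yields maps of $I^{n,T}$ rather than more general monomorphisms; here the relevant objects of $T$ are precisely the cells $[1]^{(i)}$ at which discreteness is imposed, and the bookkeeping required is the higher-dimensional analogue of the Segal category argument in \cite[\S 6]{thesis}. I would model those details on that case, after which the reduction to Lemma~\ref{thesis4.1} completes the proof.
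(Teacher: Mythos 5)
Your proof shares its skeleton with the paper's: both arguments consist of showing that right lifting against $I^{n,T}$ forces right lifting against $I_f^{n,T}$, and then quoting Lemma \ref{thesis4.1} to get surjectivity on $X[0]$ and the fiberwise acyclic fibrations. But the paper's implementation of the transfer step is a one-liner, and it is one you nearly write down yourself: you note in passing that the maps of $I_f^{n,T}$ are monomorphisms, and since the cofibrations of $\SSets^{\Thetanop}_T$ (that is, the monomorphisms) coincide with the $I^{n,T}$-cofibrations --- the discretized analogue of the corresponding fact for Segal precategories from \cite[\S 5]{thesis}, which is needed anyway to run Theorem \ref{cofgen} and which the paper simply invokes --- a map with the right lifting property with respect to $I^{n,T}$ lifts against \emph{all} monomorphisms, hence against $I_f^{n,T}$. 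No cell structure on the maps $P^{n,T}_{m,\cu} \rightarrow Q^{n,T}_{m,\cu}$ is ever needed; only that they are cofibrations.

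Your replacement for this step --- exhibiting each map of $I_f^{n,T}$ explicitly as an $I^{n,T}$-cell complex --- is legitimate in spirit (it amounts to proving by hand a special case of the mono-generation fact above), but as written it has a gap at exactly the point you defer. The undiscretized half is fine: elegance gives the canonical cell structure on $\varnothing \rightarrow K$, and the pushout-product argument writes $\partial\Delta[m]\times K \rightarrow \Delta[m]\times K$ as a relative cell complex on the Reedy generating cofibrations. The problems are in the discretized half. First, your induction does not close as stated: the inductive hypothesis is a statement about representables $K = \Theta[q](c_1,\ldots,c_q)$, while the reduction step needs the corresponding statement for the non-representable object $\partial K$ and for each of its skeletal stages; the induction must be restructured over skeletal filtrations (or arbitrary monomorphisms) of $\Theta_n$-sets before it is a proof. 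Second, and more seriously, the claim that $(-)_T$ carries each undiscretized cell attachment to a pushout of a map of $I^{n,T}$ is not formal: $(-)_T$ is a right adjoint to the inclusion $\SSets^{\Thetanop}_T \rightarrow \SSets^{\Thetanop}$, so it has no general compatibility with pushouts or transfinite composites, and the preservation here is a levelwise combinatorial property of the specific objects involved that must be verified, not quoted. Deferring that verification to ``the analogue of the Segal category case'' leaves the proof incomplete at its crux; and once you are prepared to do that bookkeeping, you may as well prove the stronger statement the paper actually uses --- that every monomorphism in $\SSets^{\Thetanop}_T$ is an $I^{n,T}$-cofibration --- after which the lemma is immediate.
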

	
	\begin{proof}
		If $f$ has the right lifting property with respect to the maps in $I^{n,T}$, then it has the right lifting property with respect to all cofibrations.  In particular, it has the right lifting property with respect to the maps in $I_f^{n,T}$, since it is not hard to check that these maps are all levelwise monomorphisms of simplicial sets.  Therefore the result follows from Lemma \ref{thesis4.1}.
	\end{proof}
	
	Finally, we can prove Proposition \ref{forward}.  Note that the proof strategy is substantially different from the one used for Segal categories in \cite[\S 5]{thesis}.
	
	\begin{proof}[Proof of Proposition \ref{forward}.]
		Suppose that $f \colon X \rightarrow Y$ is a map with the right lifting property with respect to the maps in $I^{n,T}$.  It follows that $f$ has the right lifting property with respect to all cofibrations, in particular those that are also weak equivalences.  Therefore $f$ is a fibration.  We need to show that it is a Dwyer-Kan equivalence.
		
		First consider the case where $X$ and $Y$ are $\Theta_n$-$T$-Segal categories.  Then applying Lemma \ref{thesis9.1} when $q=1$ gives the desired result, after observing that $X[1](c)(v_0, v_1)$ is the union of all the $X[1](c)(v_0, v_1)(\vu^{(1)}) \cdots (\vu^{(j)})$, and that fibrations are preserved under disjoint union.
		
		If $X$ and $Y$ are not $\Theta_n$-$T$-Segal categories, then a Dwyer-Kan equivalence between them is defined in terms of their localizations $LX$ and $LY$.  Therefore, we need to show that the required condition still holds after localizing.
		
		Let us recall how the localization is obtained.  If $X$ is not local, then we take iterated pushouts 
		\[ \xymatrix{\partial \Delta[n] \times \Theta[q](c_1, \ldots, c_q) \cup \Delta[m] \times G[q](c_1, \ldots, c_q) \ar[r] \ar[d] & X \ar[d] \\
			\Delta[m] \times \Theta[q](c_1, \ldots, c_q) \ar[r] & X'} \]
		as well as the analogous pushouts along the localizing maps $V[1](\mathcal S_{n-1})$ and $\mathcal T'_{n,j}$; recall that the latter were defined just before Definition \ref{dkequivdefn}.  We want to show that the induced square on mapping objects is still a pushout square; for simplicity, let us denote the left-hand map above, or any of its analogues for maps in $V[1](\mathcal S_{n-1})$ and $\mathcal T'_{n,j}$, as $A \rightarrow B$, so that we consider the maps on components
		\[ \xymatrix{A[1](c)(v_0, v_1) \ar[r] \ar[d] & X[1](c)(v_0, v_1) \ar[d] \\
			B[1](c)(v_0, v_1) \ar[r] & X'[1](c)(v_0, v_1).} \]
		We show that this diagram is a homotopy pushout square via an application of Mather's Cube Theorem \cite[Thm.\ 25]{mather} to the following diagram:
		\[ \xymatrix{A[1](c)(v_0,v_1) \ar[rr] \ar[dr] \ar[dd] && X[1](c)(v_0, v_1) \ar'[d][dd] \ar[dr] & \\
			& A[1](c) \ar[rr] \ar[dd] && X[1](c) \ar[dd] \\
			B[1](c)(v_0, v_1) \ar'[r][rr] \ar[dr] && X'[1](c)(v_0, v_1) \ar[dr] & \\
			& B[1](c) \ar[rr] && X'[1](c).} \]
		We know that the front square is a homotopy pushout, and we want to know that the back square is also; it suffices to show that the top, bottom, and sides of the cube are all homotopy pullback squares.  We verify this fact for the top square, namely
		\[ \xymatrix{A[1](c)(v_0, v_1) \ar[r] \ar[d] & X[1](c)(v_0, v_1) \ar[d] \\
			A[1](c) \ar[r] & X[1](c), } \]
		and leave the argument for the others as an exercise for the reader.  First, observe that the right-hand map is an inclusion of components and therefore a fibration, so it suffices to show that the square is an ordinary pullback.  Using the descriptions of $A[1](c)(v_0, v_1)$ and $X[1](c)(v_0, v_1)$ as pullbacks, one can check the necessary universal property for $A[1](c)(v_0, v_1)$.  
		
		Now, after applying the functorial localization functor to the map $X \rightarrow Y$, we obtain that the induced map on mapping objects
		\[ (LX)[1](c)(v_0, v_1) \rightarrow (LY)[1](c)(v_0, v_1) \]
		is necessarily a weak equivalence, completing the proof.
	\end{proof}
	
	Now we need to prove the converse, namely Proposition \ref{backward}.  To do so, we can generalize a construction used for the analogous proof when $n=1$. 
	
	\begin{proof}[Proof of Proposition \ref{backward}]
		Suppose $f \colon X \rightarrow Y$ is a fibration and a weak equivalence, and that $T=\{[1]^{(i)} \mid 0 \leq i \leq j\}$, for some $j <n$.  We need to show that $f$ has the right lifting property with respect to the maps in $I^{n,T}$.  First consider the case in which, for every $[1]^{(i)}$ in $T$, the induced map of discrete simplicial sets
		\[ f \colon X[1]^{(i)} \rightarrow Y[1]^{(i)} \]
		is an isomorphism.
		
		First, let us factor $f$ in the Reedy model structure $\SSets^{\Thetanop}$ as
		\[ X \hookrightarrow Y' \overset{\simeq}{\twoheadrightarrow} Y \]
		in such a way that $Y'$ still has the required components discrete, for example as in the proof of Proposition \ref{discreteloc}.  Since the map $Y' \rightarrow Y$ is a Reedy weak equivalence and therefore a Dwyer-Kan equivalence, we can conclude by the two-out-of-three property that $X \rightarrow Y'$ is also a Dwyer-Kan equivalence.
		
		Since $f$ is assumed to be a fibration, a lift exists in the diagram
		\[ \xymatrix{X \ar[r]^= \ar[d]_\simeq & X \ar[d]^f \\
			Y' \ar[r] \ar@{-->}[ur] & Y.} \]
		Therefore $f$ is a retract of $Y' \rightarrow Y$ and therefore a Reedy acyclic fibration.  Thus $f$ has the right lifting property with respect to monomorphisms, and in particular to the maps in $I^{n,T}$.  Thus, our result holds when $X$ and $Y$ have isomorphic discrete components.
		
		Now, we consider the general case, where for each $[1]^{(i)}$ in $T$ the maps of discrete spaces $X[1]^{(i)} \rightarrow Y[1]^{(i)}$ are surjective but not necessarily isomorphisms.  Define $\Phi Y$ to be the pullback of the diagram
		\[ \xymatrix{\Phi Y \ar[r] \ar[d] & Y \ar[d] \\
			\cosk_T(X) \ar[r] & \cosk_T(Y). } \]
		Observe that 
		\[ (\Phi Y)[1]^{(i)} \cong X[1]^{(i)} \]
		for every $0 \leq i <n$, and that for every $[q](c_1, \ldots, c_q) \in \ob(\Theta_n)$ and every $\vu^{(i)} \in (X[1]^{(i)}_0)^{\theta(i,\cu)}$ for each $0 \leq i \leq j$, the map
		\[ (\Phi Y)[q](c_1, \ldots, c_q)(\vu^{(0)}) \cdots (\vu^{(j)}) \rightarrow Y[q](c_1, \ldots, c_q)(\vu^{(0)}) \cdots (\vu^{(j)}) \]
		is a weak equivalence of simplicial sets. 
		
		We claim that $X \rightarrow \Phi Y$ is both a fibration and a weak equivalence.  It is not hard to show that it is a Dwyer-Kan equivalence, so let us show that $X \rightarrow \Phi Y$ is a fibration.  Let $A \rightarrow B$ be a generating acyclic cofibration, so we know that a lift exists in any diagram
		\[ \xymatrix{A \ar[d]_\simeq \ar[r] & X \ar[d]^f \\
			B \ar[r] \ar@{-->}[ur] & Y} \]
		since $X \rightarrow Y$ is assumed to be a fibration.  But we want to know that this lift is compatible with the factorization of $B \rightarrow Y$ as the composite $B \rightarrow \Phi Y \rightarrow Y$, so we want to know that the lift exists in the diagram
		\[ \xymatrix{A \ar[r] \ar[d]_\simeq & X \ar[d] \ar[dr]^f & \\
			B \ar[r] \ar@{-->}[ur] & \Phi Y \ar[r] & Y.} \]
		Since $X \rightarrow Y$ is assumed to be a fibration, we know that the indicated lift exists, but we want to know that it makes the top left-hand square commute as well, namely that the lift is compatible with the factorization of $B \rightarrow Y$ as the composite $B \rightarrow \Phi Y \rightarrow Y$. 
		
		We know that $\Phi Y$ agrees with $Y$ except possibly on the spaces corresponding to $[1]^{(i)}$.  Since $A \rightarrow B$ is a monomorphism and 
		\[ X[1]^{(i)} \cong (\Phi Y)[1]^{(i)} \]
		for all $0 \leq i <n$, a lift exists in any diagram
		\[ \xymatrix{A[1]^{(i)} \ar[r] \ar[d] & X[1]^{(i)} \ar[d] \\
			B[1]^{(i)} \ar[r] \ar@{-->}[ur] & (\Phi Y)[1]^{(i)}. } \]
		This lift, together with the lift $B \rightarrow X$ in the previous diagram, guarantees that the latter lift is compatible with the factorization of $B \rightarrow Y$ through $\Phi Y$.  It follows that $X \rightarrow \Phi Y$ is a fibration.
		
		Now, since $X \rightarrow \Phi Y$ is a fibration and a weak equivalence that is the identity on objects, we know from the first part of the proof that it has the right lifting property with respect to the maps in $I^{n,T}$.
		
		Finally, we want to show that the map $\Phi Y \rightarrow Y$ has the right lifting property with respect to the maps in $I^{n,T}$.  Thus, we want to show that a lift exists in any diagram of the form
		\[ \xymatrix{\left( \partial \Delta[m] \times \Theta[q](c_1, \ldots, c_q) \cup \Delta[m] \times \partial \Theta[q](c_1, \ldots, c_q) \right)_T \ar[r] \ar[d] & \Phi Y \ar[d] \\
			\left(\Delta[m] \times \Theta[q](c_1, \ldots, c_q) \right)_T \ar[r] \ar@{-->}[ur] & Y.} \]
		We work levelwise, evaluating at objects of $\Thetanop$.  
		
		If we evaluate at an object $[1]^{(i)}$ of $T$, then using the fact that we have discretized at such an object, one can check that the left-hand map is an isomorphism of discrete simplicial sets.  Hence, the desired lift exists.
		
		If we evaluate at any other object of $\Thetanop$, namely one at which we have not discretized, then it follows from the definition of $\Phi Y$ that the right-hand vertical map is an isomorphism of simplicial sets.  Thus, we again get the desired lift.
		
		Thus $\Phi Y \rightarrow Y$ has the right lifting property with respect to the maps in $I^{n,T}$; since we have established the same property for the map $X \rightarrow \Phi Y$, we can conclude that the composite $X \rightarrow Y$ has the right lifting property with respect to the maps in $I^{n,T}$, completing the proof.
	\end{proof}

\end{document}